\documentclass{article}

\usepackage{geometry}
\usepackage{amsmath, amsthm, amssymb}
\usepackage[colorlinks=true,citecolor=blue]{hyperref}
\usepackage{lipsum}
\usepackage{authblk}
\usepackage{enumerate}
\usepackage{graphicx}
\usepackage{enumitem}

\usepackage{tikz}
\usepackage{amsmath}
\usetikzlibrary{calc}

\usetikzlibrary{calc,decorations.pathmorphing}

\tikzset{
    vertex/.style={
        circle,
        draw,
        thick,
        minimum size=2.4mm,
        inner sep=0pt,
        fill=white
    },
    edge/.style={thick},
    lab/.style={font=\itshape}
}

\usetikzlibrary{calc,decorations.pathmorphing}

\tikzset{
  edel/.style ={draw=black, densely dashed, line width=0.45pt},
}

\newtheorem{thm}{Theorem}
\newtheorem{lem}[thm]{Lemma}
\newtheorem{prop}[thm]{Proposition}
\newtheorem{defi}[thm]{Definition}
\newtheorem{cor}[thm]{Corollary}
\newtheorem{ex}[thm]{Example}
\newtheorem{obser}[thm]{Observation}
\newtheorem{rem}[thm]{Remark}
\newtheorem*{assum}{Assumption}

\title{Characterization of graphs $G$ where $G \in \mathrm{obs}^*(H)$ for some graph $H$}

\author[1]{Zahra Rahimi \thanks{zahra.paliz.rahimi@gmail.com}}
\author[1]{M. H. Shirdareh Haghighi \thanks{shirdareh@shirazu.ac.ir}}
\author[1]{Asma Namazi \thanks{asma.namazi@outlook.com}}

\affil[1]{Department of Mathematics and Computer Science, Shiraz University, Shiraz, Iran}

\begin{document}

\maketitle

\begin{abstract}
A full-homomorphism from a graph $G$ to a graph $H$ is a function on vertex sets that preserves adjacency and non-adjacency of vertices. A graph $G$ is called a minimal $H$-obstruction if it has no full-homomorphism to $H$ but every proper vertex induced subgraph of $G$ does. Such graphs can have at most $|V(H)|+1$ vertices. The set of minimal $H$-obstructions on $|V(H)|+1$ vertices is denoted by $\mathrm{obs}^*(H)$. In question 2 of the paper "Santiago Guzm{\'a}n-Pro, Full-homomorphisms to paths and cycles, Discrete Mathematics, 347(3):113800, 2024" it is asked if there is a characterization of those graphs $G$ that lie in $\mathrm{obs}^*(H)$ for some graph $H$. In this paper, we give a complete answer to this question.
\end{abstract}

\noindent \textbf{Keywords:} Full-homomorphism, minimal obstructions, realization of point determining graphs

\noindent \textbf{AMS subject classification 2020:} 05C60, 05C75




\section{Introduction}\label{intro}

In this paper we assume that our graphs are all simple and we use the notations of Bondy and Murty~\cite{bondy2008graph}. First we give the necessary definitions and a brief history of the subject. Let $G$ and $H$ be two graphs. A \textit{full-homomorphism} $\varphi:G \to H$ is a vertex mapping that preserves adjacency and non-adjacency of vertices. In other words, for every $x, y \in V(G)$, $x$ and $y$ are adjacent if and only if $\varphi(x)$ and $\varphi(y)$ are adjacent. 

Recall that the neighborhood of a vertex $x$ in a graph $G$, $N_G(x)$ is the set of vertices that are adjacent to $x$. The closed neighborhood of $x$, $N_G[x]$ is $N_G(x) \cup \{x\}$. The following definition is from~\cite{guzman2024full}.

\begin{defi}
In a graph $G$, two vertices $x$ and $y$ are called false twins if $N_G(x)=N_G(y)$, and true twins if $N_G[x]=N_G[y]$.
\end{defi}

So, the vertices $x$ and $y$ can be mapped to the same vertex if and only if they are false twins. A \textit{blow-up} of a vertex $x \in G$ is adding a new vertex $x'$ to $G$ such that $x'$ is adjacent to the neighborhood of $x$, therefore $x$ and $x'$ are false twins in the new graph. A \textit{blow-up} of a graph $G$ is obtained by blowing-up some (possibly none) vertices of $G$, a vertex can be blowed-up many times. Clearly, a graph $G$ has a full-homomorphism to $H$ if and only if it is a blow-up of some induced subgraph of $H$ (see~\cite{guzman2024full} for the definition of blow-up).

In~\cite{sumner1973point}, Sumner defines a \textit{point determining} graph to be a graph that for any $x \neq y$ in $G$ we have $N(x) \neq N(y)$. So, a point determining graph is a graph that has no false twins.

\begin{thm}\cite{sumner1973point}
Let $G$ be nontrivial and point determining, then there exists a vertex $x \in G$ such that $G-x$ is point determining, too.
\end{thm}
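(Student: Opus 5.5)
We argue by contradiction. Suppose $G$ is nontrivial and point determining, but $G-v$ has a pair of false twins for every $v\in V(G)$. If $a,b$ are false twins in $G-v$ but not in $G$, then $v$ is the only vertex distinguishing them, so (after renaming) $N_G(a)=N_G(b)\cup\{v\}$ and $v\notin N_G(b)$. Call such an ordered triple $(v;a,b)$ a \emph{witness}: $v$ is the distinguishing vertex, $a$ the big vertex and $b$ the small vertex. So every vertex is the distinguishing vertex of some witness. We will use only this, together with the fact that $G$ has no false twins.

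The basic step is to delete a member of a given witness and use the witness this deletion produces. Fix a witness $(x;a,b)$. Deleting $b$ gives a witness $(b;c,d)$, so $N(c)=N(d)\cup\{b\}$ and $d\not\sim b$. Then $c\in N(b)\subseteq N(a)$, so $a\in N(c)\setminus\{b\}\subseteq N(d)$. Hence $d$ is adjacent to $a$ but not to $b$, which forces $d=x$. So $N(c)=N(x)\cup\{b\}$, and $(b;c,x)$ is a new witness whose small vertex is the old distinguishing vertex.

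Deleting $a$ instead gives a witness $(a;c,d)$ with $c\sim a$ and $d\not\sim a$. If $c\in N(b)$, then $b\in N(c)\setminus\{a\}\subseteq N(d)$, so $d\in N(b)\subseteq N(a)$, a contradiction. Hence $c=x$, and $(a;x,d)$ is a witness with $N(x)=N(d)\cup\{a\}$, whose big vertex is the old distinguishing vertex. So both deletions transform witnesses into witnesses while rotating the three roles.

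The plan is to choose an extremal witness and show that one of these two moves produces a witness that beats it, which is a contradiction. The natural potential is degree. The candidates are to take $(x;a,b)$ with $\deg(b)$ maximum and delete $b$, or with $\deg(a)$ minimum and delete $a$. In the first case the new small vertex is $x$, so we need $\deg(x)>\deg(b)$. To get this, we will compare $N(x)$ with $N(b)$, using that $x\sim a$, $x\not\sim b$, and that the new big vertex $c$ satisfies $c\sim a$ and $N(c)\supsetneq N(x)$. If a single move does not suffice, we iterate the two moves. Since $G$ is finite, the resulting sequence of witnesses must eventually repeat. We then read off a cyclic chain of strict inclusions of the form $N(d)\subsetneq N(c)$, with consecutive triples linked by the identities $d=x$ and $c=x$ found above, and aim to derive $N(u)\subsetneq N(u)$ for some vertex $u$.

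I expect the main obstacle to be choosing the right extremal quantity, or equivalently showing that the sequence of moves cannot cycle. The local computations above are routine. What remains unproven is that the roles rotate monotonically in degree, and that is exactly where the proof may need a cleverer invariant. If degree fails, the first alternative to try is the lexicographic pair $(\deg(\text{big}),\deg(\text{small}))$. The second is to exploit complementation: witnesses in $G$ correspond to witnesses in $\overline{G}$ with the roles of big and small swapped, and this might symmetrize the two moves.
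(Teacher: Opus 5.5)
The paper gives no proof of this statement; it is quoted from Sumner. Your two local computations are correct. They are exactly Sumner's lemma, recorded in the paper as Lemma~\ref{edame}. However, the proposal stops precisely where the contradiction has to be produced, and the closing strategies you list do not work as stated.

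A single move cannot beat an extremal witness. After deleting $b$ from $(x;a,b)$ the new small vertex is $x$, and nothing in the local data relates $\deg(x)$ to $\deg(b)$. The same problem arises when you delete $a$ with $\deg(a)$ minimal.

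Nor does ``iterate until the sequence repeats'' help by itself, because each move is an involution. Deleting the small vertex $x$ of $(b;c,x)$ gives a witness $(x;c',b)$ with $N(c')=N(b)\cup\{x\}=N(a)$, hence $c'=a$ since $G$ is point determining; the other move behaves the same way. So repetition is automatic and carries no information unless you make the moves alternate and identify which inclusions chain together. The identities $d=x$ and $c=x$ only say that the old distinguishing vertex becomes the new small or big vertex. In a single move the old big vertex never becomes the new small vertex, so one move never extends a chain $N(u_1)\subsetneq N(u_2)\subsetneq\cdots$.

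The missing idea is to compose the two moves so that a big vertex becomes a small one. Choose a witness $(x;a,b)$ with $\deg(a)$ maximal among all big vertices of witnesses; witnesses exist by your contradiction hypothesis.
\begin{enumerate}
\item Delete $a$. By your second computation this gives a witness $(a;x,d)$ for some $d$, so $d\not\sim a$.
\item Delete the small vertex $d$ of this witness. Your first computation, applied to $(a;x,d)$, yields a witness $(d;c,a)$ with $N(c)=N(a)\cup\{d\}$.
\end{enumerate}
Thus $c$ is a big vertex with $\deg(c)=\deg(a)+1$, contradicting the choice of $a$.

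Equivalently, alternating the two deletions produces a walk $w_0,w_1,w_2,\dots$ in which every three consecutive vertices form a triple with the middle one as middle vertex. These are the sequences of Feder and Hell used in the paper. Along one parity class of this walk the neighbourhoods strictly increase, the same alternating pattern as in Observation~\ref{deg_seq}. Under your hypothesis the walk never terminates, which is impossible in a finite graph. With this two-step move your argument becomes a complete, elementary proof.
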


When $G$ is point determining, the set of vertices $x \in V(G)$ where $G-x$ is point determining is denoted by $G^\circ$. We call the vertices in $G^\circ$, \textit{nucleus vertices} of $G$.

For a given graph $H$, the graph $G$ is a \textit{minimal $H$-obstruction} if $G$ has no full-homomorphism to $H$, but every induced subgraph of $G$ does. The set of minimal $H$-obstructions is denoted by $\mathrm{obs} (H)$. Point determining graphs play an important role in studying minimal obstructions.

\begin{lem}~\cite[page 2]{guzman2024full}
If $G \in \mathrm{obs}(H)$, then $G$ is a point determining graph.
\end{lem}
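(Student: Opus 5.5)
We argue by contradiction: suppose $G \in \mathrm{obs}(H)$ but $G$ has two distinct false twins $x$ and $y$, so $N_G(x)=N_G(y)$. Since $x$ and $y$ are false twins they are in particular non-adjacent, because $x \notin N_G(x) = N_G(y)$. By minimality, the proper induced subgraph $G-y$ has a full-homomorphism $\varphi : G-y \to H$. The plan is to extend $\varphi$ to all of $G$ by setting $\varphi(y)=\varphi(x)$, and then check that this extension is still a full-homomorphism. That contradicts $G \notin$ the class of graphs admitting a full-homomorphism to $H$.

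The verification is a case check on pairs of vertices of $G$.
\begin{enumerate}[label=(\roman*)]
\item Pairs not involving $y$ are handled by $\varphi$ itself.
\item For the pair $\{x,y\}$: they are non-adjacent in $G$, and $\varphi(x)=\varphi(y)$ is the same vertex. Since $H$ is simple it has no loops, so the images are non-adjacent, as required.
\item For a pair $\{y,z\}$ with $z \neq x,y$: $yz \in E(G)$ iff $z \in N_G(y)=N_G(x)$, iff $xz \in E(G)$. Since $\varphi$ is full on $G-y$, this holds iff $\varphi(x)\varphi(z) \in E(H)$, that is, iff $\varphi(y)\varphi(z)\in E(H)$.
\end{enumerate}
So the extended map preserves adjacency and non-adjacency.

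This is essentially the observation already stated informally after the definition of false twins, namely that two false twins may be mapped to a common vertex. There is no real obstacle. The only point needing care is the pair $\{x,y\}$ itself, which uses that false twins are non-adjacent and that $H$ is loopless. Equivalently, in the language of blow-ups: $G$ is a blow-up of $G-y$, and $G-y$ is a blow-up of an induced subgraph of $H$, so $G$ is too, which is the same contradiction.
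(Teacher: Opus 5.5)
Your proof is correct. The paper gives no proof of its own and simply cites the lemma, relying on its introductory remark that false twins can be sent to the same vertex (equivalently, that graphs with a full-homomorphism to $H$ are closed under blow-ups); your extension of a full-homomorphism of $G-y$ by $\varphi(y)=\varphi(x)$ is exactly that reasoning, with the non-adjacency of $x,y$ and the looplessness of $H$ correctly handled.
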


Using the following theorem, we can define $\mathrm{obs}^*(H)$ for a graph $H$.

\begin{thm}~\cite{feder2008realizations}
If $H$ is a graph with $k$ vertices, then each minimal $H$-obstruction has at most $k+1$ vertices.
\end{thm}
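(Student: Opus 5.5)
Let $G\in\mathrm{obs}(H)$, where $H$ has $k$ vertices, and suppose for contradiction that $|V(G)|\ge k+2$. By the preceding lemma $G$ is point determining. The plan is to delete one vertex at a time while keeping the graph point determining, and to reach a point determining induced subgraph of $G$ on exactly $k+1$ vertices that has to map to $H$.

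\emph{Step 1: a point determining subgraph on $k+1$ vertices.} We iterate Sumner's theorem. Since $G$ is nontrivial and point determining, there is $x_1\in G^\circ$, so $G-x_1$ is point determining. If $G-x_1$ is still nontrivial, we remove another nucleus vertex, and so on. Each step lowers the order by one and keeps the graph point determining, so we obtain a proper induced subgraph $G'$ of $G$ with $|V(G')|=k+1$ that is point determining.

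\emph{Step 2: the contradiction.} Because $G'$ is a proper induced subgraph of $G$ and $G$ is a minimal obstruction, there is a full-homomorphism $\varphi:G'\to H$. Two vertices can have the same image under a full-homomorphism only if they are false twins. In a point determining graph no two distinct vertices are false twins, so $\varphi$ is injective. This gives $k+1\le |V(H)|=k$, which is absurd. Hence $|V(G)|\le k+1$.

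The only point needing care is the justification of the false-twin claim. Suppose $\varphi(x)=\varphi(y)$ with $x\neq y$. Then $x$ and $y$ are non-adjacent, since otherwise $\varphi(x)$ would be adjacent to itself. For any other vertex $z$, we have $z\sim x$ iff $\varphi(z)\sim\varphi(x)=\varphi(y)$ iff $z\sim y$, so $N(x)=N(y)$. The iteration of Sumner's theorem in Step 1 is routine, since we only need order $k+1\ge 2$ at each step, so I expect no real obstacle.
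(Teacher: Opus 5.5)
Your argument is correct and is essentially the standard Feder--Hell proof, which the paper cites without reproducing: a minimal obstruction is point determining, Sumner's theorem gives a vertex whose deletion keeps it point determining, and a full-homomorphism of a point determining graph must be injective. The iteration in Step 1 is unnecessary, though: one nucleus vertex $x$ already gives a point determining proper induced subgraph $G-x$ that maps injectively into $H$, so $|V(G)|-1\le k$ follows directly without arguing by contradiction.
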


The set of minimal $H$-obstructions with $|V(H)|+1$ is denoted by $\mathrm{obs}^*(H)$. We call a graph $G$ an \textit{obs*} graph if $G \in \mathrm{obs}^*(H)$ for some graph $H$.

A \textit{realization} of a point determining graph $H$ is a point determining graph $G$ such that for each $x \in G^\circ$, $G-x$ is isomorphic to $H$. We have the following lemma by Feder and Hell~\cite{feder2008realizations} which relates realizations to obs* graphs.

\begin{thm} \label{obs_is_realization}
Let $G$ and $H$ be graphs with $k+1$ and $k$ vertices, respectively. Then $G$ is a minimal $H$-obstruction if and only if it is a realization of $H$.
\end{thm}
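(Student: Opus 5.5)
We prove both directions directly from the definitions, using the blow-up characterization: $G$ has a full-homomorphism to $H$ iff $G$ is a blow-up of an induced subgraph of $H$. The key consequence is that a point determining graph $F$ admits a full-homomorphism to $H$ iff $F$ is isomorphic to an induced subgraph of $H$. Indeed, such a map must be injective, since only false twins can share an image, and a full-homomorphism is an isomorphism onto its image. We implicitly take $H$ point determining, as realizations are only defined for point determining $H$.

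\emph{($\Rightarrow$)} Suppose $G\in\mathrm{obs}(H)$ with $|V(G)|=k+1$. By the earlier lemma, $G$ is point determining. Take any $x\in G^\circ$. Then $G-x$ is point determining, and by minimality it has a full-homomorphism to $H$. By the remark above, this map is an injective full-homomorphism, that is, an isomorphism onto an induced subgraph of $H$. Since $|V(G-x)|=k=|V(H)|$, that induced subgraph is all of $H$, so $G-x\cong H$. Hence $G$ is a realization of $H$. Note that $G^\circ\neq\emptyset$ by Sumner's theorem, although the definition of realization does not need this.

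\emph{($\Leftarrow$)} Let $G$ be a point determining graph on $k+1$ vertices with $G-x\cong H$ for every $x\in G^\circ$.

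First, $G\not\to H$: a full-homomorphism from the point determining graph $G$ would be injective, which is impossible since $k+1>k$.

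Second, we show every proper induced subgraph maps to $H$. It suffices to treat $G-y$ for every vertex $y$, since full-homomorphisms restrict to induced subgraphs. If $y\in G^\circ$, then $G-y\cong H$ and we are done. If $y\notin G^\circ$, then $G-y$ has false twins, and this is the main obstacle.

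Plan for the case $y\notin G^\circ$: repeatedly identify false twins in $G-y$. We note that identifying false twins, i.e.\ deleting one of a pair, yields an induced subgraph, and a graph maps to $H$ iff its point determining quotient (obtained by iterated deletion of twins) is an induced subgraph of $H$. So we reduce to showing that the quotient $Q$ of $G-y$ embeds as an induced subgraph of some $G-x$ with $x\in G^\circ$.

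For this, take a pair $u,v$ that are false twins in $G-y$. Since they are not twins in $G$, the vertex $y$ distinguishes them. Deleting $v$ gives $G-\{y,v\}$, an induced subgraph of $G-v$. We then want to argue, by induction on vertex count via Sumner's theorem applied inside $G-y$'s quotient, that the quotient is contained in $G-z$ for some nucleus $z$. The concrete attempt is the following. Let $S=\{y\}\cup(\text{deleted twins})$. Then $Q=G-S$ is point determining with $|S|\ge2$. Apply Sumner's theorem repeatedly, starting from $G$, to delete a vertex sequence ending inside $G-S$; the first deleted vertex $z$ lies in $G^\circ$, and $Q$ is then an induced subgraph of $G-z\cong H$.

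The delicate point is guaranteeing such a chain from $G$ down to $Q$ through point determining graphs. The known strengthening of Sumner's result, that any point determining induced subgraph $Q$ of a point determining graph $G$ can be extended one vertex at a time through point determining graphs, is what I would try to prove: given $Q\subsetneq G$, find $w\notin Q$ with $G-w$ point determining and containing $Q$, by a counting/structure argument on which outside vertices are nucleus vertices.
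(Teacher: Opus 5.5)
Your forward direction and your proof that $G\not\to H$ are correct. The gap is in the case $y\notin G^\circ$, which is the only nontrivial part, and the lemma you propose to prove there is false. Take $G=P_4$ with vertices $a,b,c,d$ in path order and $Q=G[\{b,c\}]\cong K_2$, which is point determining. The only vertices outside $Q$ are $a$ and $d$, and both $G-a$ and $G-d$ are copies of $P_3$, which have false twins.

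Worse, the lemma fails in exactly your application. Let $G$ be the half graph on $u_1,u_2,u_3,v_1,v_2,v_3$ with $u_iv_j\in E(G)$ iff $j\ge i$; this is an A-sequence of order $6$ on $K_2$.
\begin{itemize}
\item $G$ is point determining, $G^\circ=\{u_1,v_3\}$, and $G-u_1\cong G-v_3\cong P_4$ plus an isolated vertex. So $G$ is a realization of that graph $H$.
\item For $y=u_2$, the only false twins of $G-u_2$ are $v_1,v_2$. So your set $S$ is $\{u_2,v_1\}$ or $\{u_2,v_2\}$, and neither contains a nucleus vertex of $G$.
\item Hence no chain of point determining graphs runs from $G$ down to $Q=G-S$. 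Also, $Q$ contains both $u_1$ and $v_3$, so it is not an induced subgraph of any $G-z$ with $z\in G^\circ$.
\end{itemize}
Here $Q\cong P_4$ embeds in $H$ only up to isomorphism, whereas your plan can only produce literal containment.

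The missing idea is to move along triples using full homomorphisms rather than literal inclusion.
\begin{itemize}
\item If $(y,\{u,v\})$ is a triple, the map $G-y\to G-u$ that sends $u\mapsto v$ and fixes every other vertex is a full homomorphism. Its image is $G-\{y,u\}$, an induced subgraph of $G-u$.
\item You do not need the full point determining quotient of $G-y$; any full homomorphism into $H$ suffices.
\item Full homomorphisms compose. If $y=a_i$ is a middle vertex of a triple component with sequence $(a_1,\dots,a_m)$, the triples $(a_j,\{a_{j-1},a_{j+1}\})$ give $G-a_i\to G-a_{i-1}\to\cdots\to G-a_1\cong H$.
\item The last step holds because the endpoint $a_1$ is a nucleus vertex (the corollary after Lemma~\ref{edame}).
\end{itemize}
Knowing that this walk ends at a nucleus vertex, rather than wandering among non-nucleus vertices, is the structural input your plan lacks. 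Here it comes from Theorem~\ref{triple_comps} together with that corollary. In the half-graph example the walk is $G-u_2\to G-v_1\to G-u_1\cong H$.

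The paper gives no proof of this theorem; it quotes it from Feder and Hell.
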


In ~\cite[Question 2]{guzman2024full}, Guzm\'an-pro asks "For which graphs $G$ there is a graph $H$ such that $G$ is a minimal $H$-obstruction in $\mathrm{obs}^*(H)$?" In this paper, we answer this question.


\section{The structure of non-nucleus vertices} \label{non_nuc}

Let $G$ be a point determining graph and $x,y,z \in V(G)$. Based on~\cite{feder2008realizations} we would call $T=(x,\{y,z\})$ a triple of $G$ if $y$ and $z$ are false twins in $G-x$ (so one is adjacent to $x$ and the other is not, and neighbors of $y$ and $z$ are the same except in $x$). We call $x$ the middle vertex of this triple. Since every non-nucleus vertex creates a false twin, every such vertex is the middle vertex of some triple. Let us, as in~\cite{feder2008realizations}, define a relation $R$ between triples of $G$. Two triples are in relation $R$ if they share at least two vertices. This relation, although being reflexive and symmetric, is not necessarily transitive. Let $R'$ be the transitive closure of $R$. Then each equivalence class $C$ of $R'$ is called a \textit{triple component}.

A \textit{type-one} triple component consists of vertices $u_1, \ldots, u_r, v_1, \ldots, v_r$ with $r \geq 2$, and triples $(u_{i+1},\{v_i,v_{i+1}\})$ and $(v_i,\{u_i,u_{i+1}\})$, for all $1 \leq i \leq r-1$. The edges are either all $u_iv_j$  with $j<i$ or all $u_iv_j$ with $j \geq i$. We put them in a sequence $(u_1,v_1,u_2,v_2, \ldots, u_r, v_r)$ such that each three consecutive vertices $x,y,z$ are a triple $(y,\{x,z\})$. We call the vertices $u_1$ and $v_r$ the endpoints of this triple component; and the other vertices are called the middle vertices. We call a type-one triple component whose adjacencies are all $u_iv_j$ with $j \geq i$ an \textit{A-sequence}; and a type-one triple component whose adjacencies are all $u_iv_j$  with $j<i$ an \textit{N-sequence}. By a Bi-sequence, we mean an A-sequence or an N-sequence.

\noindent A \textit{type-two} triple component consists of vertices $u_1, \ldots u_r, u_{r+1}, v_1, \ldots, v_r$ with $r \geq 1$, and triples $(u_{i+1},\{v_i,v_{i+1}\})$ for all $1 \leq i \leq r-1$ and $(v_i,\{u_i,u_{i+1}\})$, for all $1 \leq i \leq r$. The edges are just all $u_iv_j$  with $j<i$. We can again put them in a sequence $(u_1,v_1,u_2,v_2, \ldots, u_r, v_r, u_{r+1})$ such that each three consecutive vertices  $x,y,z$ are a triple $(y,\{x,z\})$. In this triple component, the vertices $u_1$ and $u_{r+1}$ are endpoints. We call this sequence an \textit{odd sequence}.

\noindent A \textit{type-three} triple component consists of vertices $u_1, \ldots u_r, u_{r+1}, v_1, \ldots, v_r$ with $r \geq 1$, and triples $(u_{i+1},\{v_i,v_{i+1}\})$ for all $1 \leq i \leq r-1$ and $(v_i,\{u_i,u_{i+1}\})$, for all $1 \leq i \leq r$; together with the additional triple $(u_{r+1},\{v_r,u_1\})$. The edges must be all $u_iv_j$  with $j<i$. We put them in a sequence $(u_1,v_1,u_2,v_2, \ldots, u_r, v_r, u_{r+1}, u_1)$ such that each three consecutive vertices $x,y,z$ are a triple $(y,\{x,z\})$, including $v_r, u_{r+1}, u_1$. In this type of triple component, the endpoints coincide at $u_1$. We call this type of triple component a \textit{belt}.

\begin{rem} \label{induced}
The induced subgraphs of triple components are bipartite graphs with parts $V=\{v_1,\ldots,v_r\}$ and its complement. Also the induced subgraphs of an A-sequence and an N-sequence of the same order are bipartite complements of each other. The induced subgraphs of an odd sequence and a belt are the same but in an odd sequence, $v_r, u_{r+1}, u_1$ do not form a triple in the whole graph.
\end{rem}

\begin{thm}~\cite{feder2008realizations} \label{triple_comps}
Every triple component of a point determining graph must be a type-one, type-two, or type-three triple component.
\end{thm}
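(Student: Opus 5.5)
We prove Theorem~\ref{triple_comps} by analysing how triples can overlap. First we collect local facts about a single triple $(x,\{y,z\})$ in a point determining graph $G$. Since $y,z$ are false twins in $G-x$ but not in $G$, exactly one of them, say $y$, is adjacent to $x$. Moreover $N(y)\setminus\{x\} = N(z)\setminus\{x\}$, so $y\not\sim z$: otherwise $z\in N(y)$ would force $z\in N(z)$.

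Next we classify two triples that share at least two vertices. Let $T=(x,\{y,z\})$, with $y\sim x$ and $z\not\sim x$, and suppose the second triple $T'$ meets $T$ in two vertices.
\begin{itemize}
\item If $T'$ has the same middle vertex $x$ and shares $y$, say $T'=(x,\{y,w\})$, then $w$ and $z$ are both non-neighbours of $x$ with the same neighbourhood, so $w=z$ by point determination. The same argument applies if $T'$ shares $z$.
\item If the middle vertex of $T'$ is $y$, then $T'=(y,\{x,w\})$. Here $x$ and $w$ agree off $y$.
\item If the middle vertex of $T'$ is $z$, the same holds with the roles of $y$ and $z$ swapped.
\end{itemize}
From this we derive that each vertex lies in at most two triples of a component, once as a middle vertex and at most twice as an end. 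Consequently the triples of a component chain up linearly. Writing the vertices as a sequence $w_1,w_2,\ldots$ in which consecutive triples are $(w_{i+1},\{w_i,w_{i+2}\})$, the component is either a path-like sequence or closes into a cycle.

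The adjacency pattern then propagates along the sequence. From $(w_{i+1},\{w_i,w_{i+2}\})$, the vertices $w_i$ and $w_{i+2}$ have identical adjacencies to every $w_j$ with $j\neq i+1$. We then do induction on position. Let $u_i$ be the odd-indexed vertices and $v_i$ the even-indexed ones. By non-adjacency of twins and the transfer rule, no two $u$'s are adjacent and no two $v$'s are adjacent. Furthermore, $u_i\sim v_j$ depends only on whether $j<i$ or $j\ge i$, with a single global orientation chosen by the first triple. This yields the A- or N-sequence when the length is even, and the odd sequence when the length is odd.

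For odd length we must show that only the orientation with edges $u_iv_j$, $j<i$, occurs. The plan is to show that the other orientation makes the two endpoints false twins in $G$, which contradicts point determination. For a closed cycle, the wrap-around triple $(u_{r+1},\{v_r,u_1\})$ forces odd length and the same orientation, giving the belt.

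We expect the main obstacle to be the linear-chaining step: excluding branching, where three triples share a vertex in incompatible ways, and excluding cycles other than the belt. We would handle this by repeatedly applying the uniqueness fact from the first case above, which says a middle vertex together with one end determines the other end. Combined with the twin-transfer rule, this shows that any branching would produce two distinct vertices with identical neighbourhoods.
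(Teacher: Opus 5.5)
\textbf{Gap: distinctness of the chain.} The paper quotes this theorem from Feder and Hell without proof, so your argument has to stand on its own, and its central step is missing.

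Your pairwise analysis, once completed, shows that each triple is $R$-related to at most two others. It does need the easy exclusions of $T'$ having its middle outside $T$ and of $T'=(y,\{z,w\})$ with $w\neq x$. So the triples of a component chain into $w_1,\dots,w_m$ with triples $(w_{i+1},\{w_i,w_{i+2}\})$. Note that ``each vertex lies in at most two triples'' is false as written, since an interior $w_{i+1}$ lies in three.

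However, nothing you prove makes the $w_i$ distinct, and your propagation step relies on distinctness throughout: in ``no two $u$'s are adjacent'', and in applying the transfer rule to $w_j\neq w_{i+1}$. What must be shown is that the only possible coincidence is $w_m=w_1$, which closes an odd sequence into a belt, and that the triples cannot form a cyclic chain. You defer this as the ``main obstacle'', with a plan aimed at branching, which your pairwise analysis already excludes.

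One way to fill the gap is the following.
\begin{enumerate}
\item The adjacencies $w_i\sim w_{i+1}$ alternate, so each triple gives $\deg w_{i+2}-\deg w_i=\pm1$, with the sign depending only on the parity of $i$. Hence degrees are strictly monotone along odd positions and along even positions. So repeats occur only across parities, and a cyclic chain is impossible.
\item Take the first repeat $w_j=w_i$ with $i<j$. The prefix $w_1,\dots,w_{j-1}$ is distinct, so it has the structure you describe. The triple $(w_{j-1},\{w_{j-2},w_i\})$ requires $w_i$ and $w_{j-2}$ to be non-adjacent and to agree off $w_{j-1}$.
\item For $i\ge 2$, this non-adjacency forces the orientation for which $w_{i-1}\sim w_i$. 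But $w_{i-1}\not\sim w_{j-2}$, since they have the same parity, a contradiction. So $i=1$, $j$ is even, and the orientation is $u_iv_j$ with $j<i$.
\item It remains to see that the chain stops. A triple $(w_1,\{w_{j-1},w_{j+1}\})$ would force $w_{j+1}\sim w_1$ and $N(w_{j+1})=N(w_{j-1})\cup\{w_1\}$. Since $N(w_1)\subseteq N(w_3)$, this gives $w_3\in N(w_{j-1})$. That is impossible, because $w_3$ and $w_{j-1}$ are either equal or of the same parity.
\end{enumerate}

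\textbf{Odd-length step.} Separately, your plan here would fail. The two orientations of an odd sequence are reversals of one another (relabel $u'_i=u_{r+2-i}$ and $v'_j=v_{r+1-j}$). So there is nothing to exclude: you simply read the chain from the other end. Moreover, the endpoints are never false twins. They differ on every $v_j$; already for $r=1$ they differ on $v_1$ by the very definition of a triple. The contradiction you intend to derive therefore cannot exist.
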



We say a triple component $C$ is on the vertex $x$ if $x$ is an endpoint of $C$.

In a sequence, two vertices $u_i$ and $u_j$ or $v_i$ and $v_j$ are called \textit{almost false twins}. In a belt, every two vertices are called almost false twins. Two almost false twin vertices have the same neighborhoods in $G$ outside of their own triple component.

Note that we may have triple components of order three. They are odd sequences and belts with just three vertices. These components are called \textit{small} triple components; small sequence and small belt, respectively.

\begin{lem}~\cite{sumner1973point} \label{edame}
If $G$ is a point determining graph and $T=(x,\{y,z\})$ is a triple such that $N(y)=N(z)-\{x\}$, then
\begin{enumerate}
\item If $y \notin G^\circ$, then there exists $w \in G-\{y,x\}$ such that $N(x)=N(w)-\{y\}$.
\item If $z \notin G^\circ$, then there exists $w \in G-\{z,x\}$ such that $N(w)=N(x)-\{z\}$.
\end{enumerate}
\end{lem}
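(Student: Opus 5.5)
The plan is a direct neighborhood-chasing argument based only on the definitions and on $G$ being point determining. First I will record the basic facts about the triple. Since $N(y)=N(z)-\{x\}$ and $y,z$ are not false twins in $G$, we have $z\sim x$ and $y\not\sim x$. Also $y\notin N(z)$: indeed $y\notin N(y)$ and $y\neq x$. So $y\not\sim z$. The key identity is that, for every vertex $c\neq x$, $c\sim y$ if and only if $c\sim z$.

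\emph{Part 1.} Suppose $y\notin G^\circ$. Then $G-y$ has a pair of false twins $a,b$. Since $G$ itself is point determining, $a$ and $b$ must differ exactly at $y$; say $b\sim y$ and $a\not\sim y$. The goal is to show that $x\in\{a,b\}$. Since $x\not\sim y$, this forces $a=x$, and then $w:=b$ satisfies $N(x)=N(w)-\{y\}$ with $w\notin\{x,y\}$.

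To prove $x\in\{a,b\}$, suppose not. First, $b\neq z$, because $z\not\sim y$. By the key identity, $b\sim z$, while $a\not\sim z$ unless $a=z$. Since $a$ and $b$ agree outside $y$, we must have $a=z$. Then $N(b)=N(z)\cup\{y\}$. Now $y\in N(b)$ gives $b\in N(y)\subseteq N(z)$, so $z\in N(b)=N(z)\cup\{y\}$. This says $z\in N(z)$, which is absurd.

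\emph{Part 2.} This is the mirror argument. Suppose $z\notin G^\circ$, and let $a,b$ be false twins in $G-z$ that differ exactly at $z$, with $b\sim z$ and $a\not\sim z$. We show $x\in\{a,b\}$; since $x\sim z$, this gives $x=b$, and $w:=a$ satisfies $N(w)=N(x)-\{z\}$. If $x\notin\{a,b\}$, the key identity gives $b\sim y$. Also $a\not\sim y$ unless $a=y$. Since $y\not\sim z$, the only option left is $a=y$, so $N(b)=N(y)\cup\{z\}$. Now $b\in N(z)$ and $b\neq x$ give $b\in N(y)$, so $y\in N(b)=N(y)\cup\{z\}$, which is again absurd.

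The main obstacle is the degenerate case where one of the twins $a,b$ coincides with the other vertex of the triple ($z$ in Part 1, $y$ in Part 2). There the separating argument via the key identity breaks down, and we have to close it with the small self-loop contradiction above.
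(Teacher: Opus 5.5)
Your proof is correct. The paper gives no proof of this lemma and simply cites Sumner, so there is no in-paper argument to compare with. Your direct neighbourhood-chasing argument is complete and self-contained: from the point-determining property you get $z\sim x$, $y\not\sim x$, $y\not\sim z$ and the key identity, then you show that any false-twin pair of $G-y$ (resp.\ $G-z$) must contain $x$.

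The degenerate case you call the main obstacle is not really a separate case. In Part 1 we have $z\neq y$, so $N(a)$ and $N(b)$ must agree at the vertex $z$. Then $b\sim z$ forces $z\in N(a)$, which already gives $a\neq z$ since there are no loops. The key identity applied to $a\neq x$ then gives $a\sim y$, contradicting $a\not\sim y$. Part 2 works the same way with $y$ and $z$ swapped. Your self-loop contradiction rederives this same fact, so nothing is wrong; the argument is just slightly roundabout.

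Since $a,b$ were arbitrary false twins, your argument proves slightly more than stated: \emph{every} pair of false twins in $G-y$ (resp.\ $G-z$) contains $x$, not merely that a suitable $w$ exists.
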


\begin{cor}
In a point determining graph, every endpoint of a triple component is a nucleus vertex.
\end{cor}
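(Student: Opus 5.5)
The plan is to argue by contradiction using Lemma~\ref{edame}: a non-nucleus endpoint would produce a new triple that lies in the same triple component but has the endpoint as its middle vertex. The structure given by Theorem~\ref{triple_comps} forbids this.

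Let $C$ be a triple component of a point determining graph $G$, and let $e$ be an endpoint of $C$. By Theorem~\ref{triple_comps}, $C$ is a Bi-sequence, an odd sequence or a belt. In each case $e$ lies in a triple $T=(x,\{e,z\})$ of $C$, where $x$ is the vertex next to $e$ in the defining sequence. For example, with $e=u_1$ we have $x=v_1$ and $z=u_2$; with $e=v_r$ in a type-one component we have $x=u_r$ and $z=v_{r-1}$. For a belt, the endpoint $u_1$ lies in the triple $(v_1,\{u_1,u_2\})$.

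Since $e$ and $z$ are false twins in $G-x$, exactly one of $e$ and $z$ is adjacent to $x$. This gives two cases.

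\emph{Case 1: $N(e)=N(z)-\{x\}$.} Suppose $e\notin G^\circ$. Applying Lemma~\ref{edame}(1) with $y=e$ gives a vertex $w\in G-\{e,x\}$ with $N(x)=N(w)-\{e\}$. Then $x$ and $w$ are false twins in $G-e$, so $(e,\{x,w\})$ is a triple of $G$. This triple shares the two vertices $e$ and $x$ with $T$, so it is $R$-related to $T$ and therefore belongs to $C$.

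\emph{Case 2: $N(z)=N(e)-\{x\}$.} This is symmetric: apply Lemma~\ref{edame}(2) with $z$ replaced by $e$. We obtain $w$ with $N(w)=N(x)-\{e\}$, and again $(e,\{x,w\})$ is a triple in $C$.

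In both cases $e$ becomes the middle vertex of a triple of $C$. It remains to check this is impossible by inspecting the three types in Theorem~\ref{triple_comps}.
\begin{itemize}
\item In a type-one component, the middle vertices of the listed triples are $v_1,\dots,v_{r-1}$ and $u_2,\dots,u_r$. These exclude both $u_1$ and $v_r$.
\item In an odd sequence, the middle vertices are $v_1,\dots,v_r$ and $u_2,\dots,u_r$. These exclude $u_1$ and $u_{r+1}$.
\item In a belt, the middle vertices are $v_1,\dots,v_r$ and $u_2,\dots,u_{r+1}$. These exclude $u_1$.
\end{itemize}

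The main obstacle is justifying that the triples listed in the description of each type are \emph{all} the triples of the component. Only then does the new triple $(e,\{x,w\})$ give a contradiction, rather than just enlarging $C$. This completeness is implicit in the statement of Theorem~\ref{triple_comps}, where a triple component is an entire equivalence class of $R'$ and its triples are described exactly.

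To be careful, I would also check directly that adding $(e,\{x,w\})$ cannot be absorbed. If it were, $C$ would extend past $e$ to $w$, giving a longer sequence, or closing the sequence into a belt when $w$ is the other endpoint. Either way $e$ would no longer be an endpoint, contradicting the choice of $e$ as an endpoint of the full equivalence class. Hence $e\in G^\circ$.
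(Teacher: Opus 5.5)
Your proposal is correct and follows the paper's own argument. You apply Lemma~\ref{edame} to the triple containing the endpoint, which yields a triple with the endpoint as its middle vertex. That triple shares two vertices with the original one, so it lies in the same component, and this contradicts the explicit list of triples given by Theorem~\ref{triple_comps}. Your version is more detailed than the paper's terse proof, since you treat both adjacency cases of Lemma~\ref{edame} and list the middle vertices for each component type.
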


\begin{proof}
Let $C$ be a triple component in a point determining graph $G$ and $a,b,c$ be the first three vertices of $C$. By Lemma~\ref{edame} if $a \notin G^\circ$ we have a triple $T_a=(a,\{w,b\})$ in $G$, for some $w \notin \{a,b\}$. The triples $T_a$ and $T$ share two vertices, hence they belong to the same triple component $C$. But $T_a$ is not in $C$, because there is no triple in $C$ with $a$ as its middle vertex. We do the same if $a$ is the last vertex of $C$.
\end{proof}

\begin{lem}~\cite{feder2008realizations} \label{no_odd}
If $G$ is a realization of some graph $H$, then $G$ cannot have a type-two triple component.
\end{lem}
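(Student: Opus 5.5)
We argue by contradiction. Suppose $G$ is a realization of $H$ and contains a type-two triple component $C$, the odd sequence $(u_1,v_1,u_2,\ldots,u_r,v_r,u_{r+1})$. By Theorem~\ref{obs_is_realization} it is equivalent to work with realizations: $G$ is point determining, and $G-x\cong H$ for every $x\in G^\circ$. The plan is to exhibit two nucleus vertices $x,y$ with $G-x\not\cong G-y$. The natural candidates are the two endpoints $u_1$ and $u_{r+1}$. By the Corollary they are nucleus vertices, so $G-u_1\cong H\cong G-u_{r+1}$.

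First I would compute the triple structure of $G-u_1$ and of $G-u_{r+1}$. Deleting an endpoint destroys exactly the triples containing it. Deleting $u_1$ removes the triple $(v_1,\{u_1,u_2\})$, so in $G-u_1$ the component $C$ shrinks to the sequence $(u_2,v_2,\ldots,v_r,u_{r+1})$. Since its adjacencies are still $u_iv_j$ with $j<i$, this is again an odd sequence, but one step shorter. Deleting $u_{r+1}$ removes $(v_r,\{u_r,u_{r+1}\})$ and leaves $(u_1,v_1,\ldots,u_r,v_r)$. This has the N-sequence adjacency pattern, so it is a type-one component.

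The main obstacle is controlling the triples that do not come from $C$. The triples of $G$ in other components might behave differently after deleting $u_1$ than after deleting $u_{r+1}$. Also, deleting a vertex might create new triples, namely pairs $y,z$ whose neighbourhoods differ only in the deleted vertex and in one other vertex $x$. Here I would use that $u_1$ and $u_{r+1}$ are almost false twins: they have identical neighbourhoods outside $C$. So any pair outside $C$ is distinguished by $u_1$ exactly when it is distinguished by $u_{r+1}$. Hence the triples of $G-u_1$ and $G-u_{r+1}$ that avoid $C$ correspond under the identity map on $V(G)\setminus C$. Newly created triples involving vertices of $C$ must be checked case by case using Lemma~\ref{edame}.

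Once this is done, the triple components form an isomorphism invariant. We count triple components by type, and for type-two components also by length, together with multiplicities. The two multisets are then as follows:
\begin{itemize}
\item in $G-u_1$, the component $C$ contributes an odd sequence and no type-one component;
\item in $G-u_{r+1}$, the component $C$ contributes a type-one component of length $2r$ and no odd sequence of length $2r-1$.
\end{itemize}
With everything else matching, the number of type-two components of a given length differs between the two graphs. Hence $G-u_1\not\cong G-u_{r+1}$, which contradicts $G$ being a realization of $H$.

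The degenerate case $r=1$ needs separate care, since the residual component then disappears or becomes trivial. There I would instead compare a simple invariant such as the degree sequence or the edge count. Note that $u_2$ is adjacent to $v_1$ while $u_1$ is not, so $\deg u_2=\deg u_1+1$, and the two deletions remove different numbers of edges. This distinguishes $G-u_1$ from $G-u_{r+1}$ directly.
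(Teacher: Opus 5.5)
Your argument for general $r$ has a genuine gap. Your own degree argument from the $r=1$ case already proves the lemma for every $r$.

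\textbf{The remnant in $G-u_1$ is miscomputed.} Deleting $u_1$ destroys only the triple $(v_1,\{u_1,u_2\})$. For $r\ge 2$ the triple $(u_2,\{v_1,v_2\})$ survives, so $v_1$ stays in the component. The remnant is therefore $(v_1,u_2,v_2,\ldots,v_r,u_{r+1})$, not $(u_2,v_2,\ldots,u_{r+1})$. It has $2r$ vertices. Writing $u'_i=v_i$ and $v'_i=u_{i+1}$, its adjacencies are exactly $u'_jv'_i$ with $i\ge j$, so it is an A-sequence, i.e.\ a type-one component. Hence both deletions turn $C$ into a type-one component of length $2r$: A-pattern in $G-u_1$, N-pattern in $G-u_{r+1}$. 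Your invariant, the number of type-two components of each length, gets no contribution from $C$ in either graph, and the claimed discrepancy disappears.

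\textbf{The step ``with everything else matching'' is not established.} This is the part you defer, and it is not routine. Deleting $u_1$ rather than $u_{r+1}$ can create new triples meeting $C$. For example, a pair $y,z$ outside $C$ with $N(y)\triangle N(z)=\{u_1,u_{r+1}\}$ gives the triple $(u_{r+1},\{y,z\})$ in $G-u_1$ but $(u_1,\{y,z\})$ in $G-u_{r+1}$. Such new triples can reshape the triple components, and nothing in the proposal controls this. So even if you switched to distinguishing the A-remnant from the N-remnant, the comparison of triple-component multisets would still be unproved.

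\textbf{The fix is your last paragraph, which needs no restriction to $r=1$.} The triples $(v_i,\{u_i,u_{i+1}\})$ give $N(u_i)\triangle N(u_{i+1})=\{v_i\}$. With the adjacency pattern $u_iv_j$ for $j<i$, this means $N(u_{i+1})=N(u_i)\cup\{v_i\}$ with $v_i\notin N(u_i)$. Hence $\deg u_{r+1}=\deg u_1+r>\deg u_1$. Both endpoints are nucleus vertices, so $G-u_1\cong H\cong G-u_{r+1}$. This forces $|E(G)|-\deg u_1=|E(G)|-\deg u_{r+1}$, a contradiction.

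That is a complete proof, and the entire triple-structure comparison can be dropped. The paper gives no proof of its own here; it cites Feder and Hell. The edge-count reasoning is the same one the paper uses in Observation~\ref{deg_seq} and Corollary~\ref{deg_in_obs}.
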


So, we will only deal with A-sequences, N-sequences and belts in obs* graphs.

\begin{lem} \label{no_c_intersection}
If a non-nucleus vertex belongs to different triple components, then both triple components are small sequences.
\end{lem}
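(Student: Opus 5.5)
The plan is to take a non-nucleus vertex $x$ lying in two distinct triple components $C_1$ and $C_2$. Write $T_1=(x,\{a,b\})\in C_1$ and $T_2=(x,\{c,d\})\in C_2$. I will show that $C_1$ must be a small sequence; the same argument with the roles of $C_1$ and $C_2$ swapped then gives the claim for $C_2$.

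First I fix the setting. By the Corollary, endpoints of triple components are nucleus vertices, so $x$ is not an endpoint of either component. By Theorem~\ref{triple_comps} and the sequence description of each type, $x$ is therefore the middle vertex of some triple in each component. This justifies writing $T_1$ and $T_2$ as above. If $T_1$ and $T_2$ shared a vertex besides $x$, they would be $R$-related and hence $C_1=C_2$. So $\{a,b\}\cap\{c,d\}=\emptyset$. Also, since $c,d$ are false twins in $G-x$, exactly one of them is adjacent to $x$; say $c\sim x$ and $d\not\sim x$.

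The key step is to rule out that one of $a,b$ is itself the middle vertex of a triple of $C_1$ containing $x$. Suppose such a triple exists, say $T'=(a,\{x,w\})$. Then $N(x)$ and $N(w)$ agree outside $\{a\}$. I split into two cases.
\begin{enumerate}
\item If $w\in\{c,d\}$, then $T'$ and $T_2$ share the two vertices $x$ and $w$. So they are $R$-related, which puts $T_2$ in $C_1$, a contradiction.
\item If $w\notin\{c,d\}$, then $w$ agrees with $x$ on $c$ and $d$, since neither equals $a$. Hence $w\sim c$ and $w\not\sim d$. But $w\neq x$, and $N(c)\setminus\{x\}=N(d)\setminus\{x\}$, which forces $w\sim d$. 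This is again a contradiction.
\end{enumerate}
So no such $T'$ exists.

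Finally I read off the structure of $C_1$. In a sequence of length at least four, the two neighbors $a,b$ of the middle vertex $x$ in the ordering cannot both be endpoints, so one of them, say $a$, is a middle vertex. Its triple is formed from its consecutive neighbors in the sequence, one of which is $x$, so it has the form $(a,\{x,w\})$, which was excluded above. In a belt, including the small belt, every vertex is the middle vertex of a triple formed by its two cyclic neighbors. So $a$ is again the middle of a triple containing $x$, which is excluded. Therefore $C_1$ has exactly three vertices and is not a belt, i.e.\ it is a small sequence.

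I expect the main obstacle to be this last structural step. It needs a careful check, against the explicit definitions of A-, N-, odd sequences and belts, that a non-small component always contains a triple $(a,\{x,w\})$ with $a\in\{a,b\}$. The case analysis on $w$ itself is straightforward.
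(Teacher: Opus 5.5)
Your proof is correct, and it takes a different, self-contained route from the paper's.

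\textbf{How the paper argues.} The paper does not prove the non-small case itself. It cites Feder and Hell for the fact that two components through a common non-nucleus vertex must each be a small sequence or a small belt. It then only rules out small belts: in a small belt $(x_1,x_2,x_3,x_1)$ the vertices $x_2,x_3$ are true twins, so $x_3$ would be a second vertex distinguishing the outer vertices of the other triple.

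\textbf{How your argument differs.} Your key step uses the same ``second distinguisher'' idea. A triple $(a,\{x,w\})$ in $C_1$ with $w\notin\{c,d\}$ yields a vertex $w$ that copies $x$'s adjacencies to $c$ and $d$, which is impossible because $c,d$ differ only at $x$. If instead $w\in\{c,d\}$, the two components would merge.

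The difference is that you apply this idea uniformly to every component type. So you prove, in one pass, both the cited Feder--Hell reduction and the small-belt exclusion. In the small belt, your distinguisher is the endpoint $x_1$, obtained from the triple $(x_3,\{x_2,x_1\})$, rather than the true twin $x_3$. You pay with a little more case checking, and you gain a lemma that no longer depends on an external page reference.

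\textbf{One slip to fix.} In a belt $(u_1,v_1,\ldots,u_r,v_r,u_{r+1},u_1)$, the endpoint $u_1$ is a nucleus vertex and is not the middle vertex of any triple. So your claim that ``every vertex is the middle vertex of a triple formed by its two cyclic neighbours'' is false as stated, and an arbitrary choice of $a$ could land on $u_1$.

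What you actually need is weaker. Since $x\neq u_1$ and $x$ has two distinct cyclic neighbours, at least one of them differs from $u_1$. That neighbour is the middle vertex of a triple containing $x$, so choose it as $a$. With this correction the rest of your argument goes through unchanged.
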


\begin{proof}
By~\cite[page 1642]{feder2008realizations}, if a non-nucleus vertex belongs to different triple components, each of these triple components are either a small sequence or a small belt. 
We will show that a small belt and any triple (whether in a small belt or small sequence) have no common non-nucleus vertices.

Suppose a small belt $C=(x_1,x_2,x_3,x_1)$ and a triple $T=(y_2,\{y_1,y_3\})$ intersect at the middle, that is $y_2 \in \{x_2,x_3\}$. Without loss of generality, suppose $x_2=y_2$ (The reversal of a belt is a belt, too). Note that $C$ and $T$ have no more vertices in common, because $C$ is a triple component. 
 
Since $T$ is a triple, the neighborhoods of $y_1$ and $y_3$ differ exactly in $y_2=x_2$. But since $C$ is a small belt, $x_2$ and $x_3$ are true twins and therefore have the same closed neighborhoods. Thus $x_3$ is also a difference in the neighborhoods of $y_1$ and $y_3$, which is a contradiction.
\end{proof}

Hence we have the following corollary.

\begin{cor}\label{no_intersection_in_obs}
A non-nucleus vertex in an obs* graph, cannot lie in two different triple components.
\end{cor}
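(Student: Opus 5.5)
The plan is to combine Lemma~\ref{no_c_intersection} with Lemma~\ref{no_odd}, using Theorem~\ref{obs_is_realization} to connect obs* graphs to realizations.

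First, let $G$ be an obs* graph, so $G \in \mathrm{obs}^*(H)$ for some graph $H$ with $k$ vertices, and $G$ has $k+1$ vertices. By Theorem~\ref{obs_is_realization}, $G$ is a realization of $H$. In particular $G$ is point determining, so the triple-component machinery applies. Lemma~\ref{no_odd} then tells us that $G$ has no type-two triple component.

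Next, suppose for contradiction that a non-nucleus vertex $x$ of $G$ lies in two different triple components $C_1$ and $C_2$. Lemma~\ref{no_c_intersection} gives that both $C_1$ and $C_2$ are small sequences. By definition, a small sequence is an odd sequence on three vertices, namely a type-two triple component with $r=1$. So $G$ would contain a type-two triple component, contradicting the previous step. This proves the corollary.

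There is no real obstacle here. The only point to check is bookkeeping: that ``small sequence'' in Lemma~\ref{no_c_intersection} is indeed the type-two case rather than a small belt. This holds by the definition of small triple components, and Lemma~\ref{no_c_intersection} has already excluded small belts.
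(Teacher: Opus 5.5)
Your proof is correct and matches the paper's intended argument: the paper states this corollary as an immediate consequence of Lemma~\ref{no_c_intersection} (both components would be small sequences, i.e.\ type-two components with $r=1$) combined with Lemma~\ref{no_odd}, which applies because Theorem~\ref{obs_is_realization} makes every obs* graph a realization. Your bookkeeping remark is also right: Bi-sequences have $r \geq 2$ and hence order at least $4$, so a small sequence can only be a type-two component.
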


Since the endpoints of each triple component are nucleus vertices, it is necessary to study the structure of $G[G^\circ]$. An obs* graph $G$ is, roughly speaking, a graph $K=G[G^\circ]$ with some appending Bi-sequences and belts as follows. Some A-sequences between two adjacent nucleus vertices, some N-sequences between two non-adjacent vertices, and some belts on some nucleus vertices.

In the following two sections we demonstrate how a suitable nucleus can be extended to an obs* graph by adding Bi-sequences and belts.

\begin{rem} \label{other_adj}
Once we have the structure of $K=G[G^\circ]$ in an obs* graph $G$ and we know what Bi-sequences or belts are on the vertices of $K$, we can determine all adjacencies and non-adjacencies in $G$. We do not have any odd sequences in an obs* graph, so every vertex is either itself a nucleus vertex or is almost false twin with a nucleus vertex. In a type-one triple component $(u_1,v_1,u_2,v_2,\ldots,u_r,v_r)$, the vertex $u_i$, $i>1$, is almost false twin with $u_1$, and the vertex $v_j$, $j<r$ is almost false twin with $v_r$ (the endpoints $u_1$ and $v_r$ are nucleus vertices). In a belt $(u_1,v_1,\ldots,u_r,v_r,u_{r+1},u_1)$, every non-nucleus vertex $z \neq u_1$ is almost false twin with the endpoint $u_1$ (which is nucleus vertex).
Let $u$ and $v$ be two vertices that are not in the same triple component of $G$. If $u$ is nucleus, set $u_0=u$ and if $u$ is non-nucleus, set $u_0$ to be the nucleus vertex that is almost false twin with $u$. We do the same for $v$ to get $v_0$. Now $u_0v_0$ is an edge, if and only if $uv$ is an edge.
Also note that any two triple components can only intersect in their endpoints, by Corollary~\ref{no_intersection_in_obs}.
\end{rem}


\section{The Structure of $G[G^\circ]$ in an obs* Graph $G$}

In this section, we study the structure of $G[G^\circ]$ in an obs* graph $G$. The main results of this section are Theorems~\ref{K_pd} and~\ref{the_big_thm} which state that for an obs* graph $G$, $G[G^\circ]$ is point determining and vertex-transitive. First observe that the degree of two nucleus vertices $x$ and $a$ are equal in $G$, since $G-x$ is isomorphic to $G-a$ (because $G$ is in fact the realization of $G-x$ for every nucleus vertex $x$ in $G$ by Theorem~\ref{obs_is_realization}).

\begin{thm} \label{K_pd}
If $G$ is an obs* graph, then $G[G^\circ]$ is point determining.
\end{thm}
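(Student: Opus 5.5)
Write $K=G[G^\circ]$. I will argue by contradiction: suppose two distinct nucleus vertices $a,b$ are false twins in $K$, so $N_G(a)\cap G^\circ=N_G(b)\cap G^\circ$. Since $G$ is point determining, $N_G(a)\neq N_G(b)$. Hence some non-nucleus vertex $w$ is adjacent to exactly one of $a,b$. The strategy is to use Remark~\ref{other_adj} to locate $w$ and derive a contradiction.

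First I handle the case where $w$ lies in no triple component whose endpoints include $a$ or $b$. Let $w_0$ be the nucleus vertex that is almost false twin with $w$, as in Remark~\ref{other_adj}. Then $w$ is adjacent to $a$ if and only if $w_0$ is adjacent to $a$, and likewise for $b$. (If $w_0\in\{a,b\}$, then $w$ lies in a triple component on $a$ or $b$, which is excluded here.) Since $a$ and $b$ have the same neighbours in $K$, $w_0$ is adjacent to both or to neither. So $w$ cannot distinguish $a$ from $b$.

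It follows that $w$ lies in a triple component $C$ on $a$ or on $b$. Its non-nucleus vertices are middle vertices of $C$, by Corollary~\ref{no_intersection_in_obs}. There are three cases.
\begin{enumerate}
\item $C$ is a belt on $a$. Every vertex of $C$ other than $a$ is almost false twin with $a$. So if $b\notin C$, all of them have the same adjacency to $b$ as $a$ has, which is fixed.
\item $C$ is a Bi-sequence on $a$ and some $c\neq b$. Middle vertices are almost false twins of $a$ or of $c$, so their adjacency to $b$ is determined by the adjacency of $a$ or $c$ to $b$.
\item $C$ is a Bi-sequence between $a$ and $b$ themselves. This forces $a$ and $b$ to be adjacent (for an A-sequence) or non-adjacent (for an N-sequence). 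It also makes their neighbourhoods differ in a structured way along $C$.
\end{enumerate}
I would then compare adjacencies to $a$ and to $b$ within $C$ itself, where the distinction genuinely happens.

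The main obstacle is to turn this local difference into a global contradiction. My plan is a counting argument. All nucleus vertices have equal degree in $G$, as noted before the theorem. Since $\deg_K a=\deg_K b$, the number of non-nucleus neighbours of $a$ equals that of $b$. These numbers are determined by the components on $a$ and $b$: from the index structure, an A-sequence of length $2r$ contributes $r-1$ to one endpoint's non-nucleus neighbours, and so on.

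I would then exhibit the swap of $a$ and $b$ together with a matching of their components. The aim is to show that $G-a\cong G-b$ is realized in a way that forces an automorphism-like structure, and this would in turn produce a triple $(x,\{a,b\})$ or a pair of false twins in $G$, contradicting point-determinacy.

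If that approach fails, my fallback is Lemma~\ref{edame}. It would show that the false-twin pair $a,b$ in $K$ must be joined by a triple whose middle vertex is a non-nucleus vertex. That would put $a,b$ together with this middle vertex into a common triple component. But $a$ and $b$ are endpoints, so this component is a small sequence $(a,w,b)$. In a small sequence $N(a)$ and $N(b)$ differ only in $w$. The degree equality then gives a contradiction, since exactly one of $a,b$ gains the extra neighbour $w$.
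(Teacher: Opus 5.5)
\textbf{Verdict: the proposal has a genuine gap.} Your reduction to a non-nucleus vertex $w$ lying in a triple component $C$ on $a$ or $b$ is fine, and it matches the paper's setup. The problem is what comes next. You assume that inside $C$ ``the distinction genuinely happens'' and then try to refute it globally, but neither global argument is actually supplied.
\begin{itemize}
\item The counting/swap plan is only an aim; no argument is given that it produces a triple $(x,\{a,b\})$ or a false-twin pair.
\item The fallback misreads Lemma~\ref{edame}. That lemma starts from a triple $(x,\{y,z\})$ whose end $y$ or $z$ is non-nucleus. Nothing in it forces two nucleus vertices $a,b$ that are twins in $K$ to be the ends of a triple $(w,\{a,b\})$. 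A priori, $N_G(a)$ and $N_G(b)$ could differ in many vertices.
\end{itemize}
As written, the proof does not close.

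The missing observation is that no distinction can occur locally at all, so the case analysis you began already finishes the proof. Since $N_K(a)=N_K(b)$ and there are no loops, $a$ and $b$ are non-adjacent. With this:
\begin{itemize}
\item \emph{Belt on $a$.} The endpoint $u_1$ of a belt has no neighbours in the belt, so $w\not\sim a$. Also $w\sim b$ iff $a\sim b$, which is false.
\item \emph{Bi-sequence on $a$ and $c\neq b$, with $w$ an almost false twin of $a$.} Then $w$ lies on the same side of the bipartition of $C$ as $a$, so $w\not\sim a$. Again $w\sim b$ iff $a\sim b$, which is false.
\item \emph{Bi-sequence on $a$ and $c\neq b$, with $w$ an almost false twin of $c$.} The Bi-sequence adjacencies give $w\sim a$ iff $c\sim a$. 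Since $b\notin C$, $w\sim b$ iff $c\sim b$. Finally $c\sim a$ iff $c\sim b$, because $c\in K\setminus\{a,b\}$.
\item \emph{Bi-sequence joining $a$ and $b$.} The fact $a\not\sim b$ rules out an A-sequence. In an N-sequence $(u_1,v_1,\dots,u_r,v_r)$ neither endpoint has a neighbour inside $C$: there is no $v_j$ with $j<1$ and no $u_i$ with $i>r$. So your claim that their neighbourhoods ``differ in a structured way along $C$'' is false.
\end{itemize}

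Hence every non-nucleus vertex has the same adjacency to $a$ and to $b$, so $N_G(a)=N_G(b)$. This contradicts $G$ being point determining. This is exactly the paper's proof; no degree counting or automorphism argument is needed.
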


\begin{proof}
Let $G$ be an obs* graph and suppose, to the contrary, that $K=G[G^\circ]$ is not point determining. Then $K$ has a pair $u,v$ of false twins.
We show that they are also false twins in $G$.

Let $c$ be a non-nucleus vertex of $G$.
Then there exists some nucleus vertex $a$ which is almost false twin with $c$ in a triple component $C$. Since $u$ and $v$ are false twins in $K$, $a$ is adjacent to $u$ if and only if it is adjacent to $v$. There is no triple component of $G$ with more than two nucleus vertices, thus at most one of $u$ or $v$ can be in $C$.

First suppose that $a \notin \{u,v\}$. Therefore, if none of $u$ and $v$ are in $C$, $c$ is also adjacent to $u$ if and only if it is adjacent to $v$. Now suppose that one of $u$ or $v$, say $u$, is in $C$. Then $C$ is a Bi-sequence and $a$ and $u$ are its two endpoints. In this case, by the adjacencies of a Bi-sequence, $c$ is adjacent to $u$ if and only if $a$ is adjacent to $u$. Furthermore, since $v$ is not in $C$, $c$ is adjacent to $v$ if and only if $a$ is adjacent to $v$. Thus $c$ is adjacent to $u$ if and only if it is adjacent to $v$.

Second suppose that $a \in \{u,v\}$, say $a=u$. Then if $C$ is a belt, then $a=u$ is not adjacent to any vertex of the belt, including $c$. If $C$ is a Bi-sequence, the vertices $a=u$ and $c$ are not adjacent (because they are almost false twins and belong to the same part of the bipartition of the Bi-sequence $C$). But the vertices $a=u$ and $v$ are false twins and therefore not adjacent. Thus, if $v \in C$, $C$ is an N-sequence with endpoints $u_1=u$ and $v_r=v$ and $c$ is a middle vertex $u_i$ for some $i$; hence $c$ and $v$ are non-adjacent. If $v \notin C$, then since the vertices $a=u$ and $c$ are almost false twins and $u$ is not adjacent to $v$, then $c$ is not adjacent to $v$.

So, in all cases $c$ is adjacent to $u$ if and only if it is adjacent to $v$. The proof is now complete.

\end{proof}

\begin{obser}\label{deg_seq}
{\normalfont
Let us have an observation about the degrees of the vertices in an obs* graph $G$. As we said earlier, the nucleus vertices have the same degree in $G$, say, $k$. Every non-nucleus vertex is involved in a Bi-sequence or belt. Consider an A-sequence $(u_1,v_1,u_2,v_2, \ldots, u_r,v_r)$. Since $u_1$ is a nucleus vertex, $\deg_G(u_1)=k$. Now $(v_1,\{u_1,u_2\})$ is a triple where $u_1$ is adjacent to $v_1$, thus we have $N(u_2)=N(u_1)-\{v_1\}$ and therefore $\deg_G(u_2)=k-1$. With the same argument we see that $N(u_3)=N(u_2)-\{v_2\}$ and thus $\deg_G(u_3)=k-2$. By repeating the arguments we see that $\deg_G(u_i)=\deg_G(v_{r-i+1})=k-(i-1)$, $1 \leq i \leq r$. So, the degrees of the vertices in an A-sequence would be $(k,k-(r-1),k-1,k-(r-2),k-2, \ldots, k-1, k-(r-1),k)$.

If we use a similar argument in an N-sequence $(u_1,v_1,u_2,v_2, \ldots, u_r,v_r)$, we get its degree sequence as $(k,k+(r-1),k+1,k+(r-2),k+2, \ldots, k+1,k+(r-1),k)$.

For the belt $(u_1,v_1,u_2,v_2, \ldots, u_r, v_r, u_{r+1}, u_1)$, the degrees would be $\deg_G(u_i)=\deg_G(v_{r-i+2})=k+(i-1)$, $1 \leq i \leq r+1$. Therefore the degrees of the vertices in this belt would be $(k,k+(r-1),k+1,k+(r-2),k+2, \ldots, k+1,k+(r-1),k)$.}
\end{obser}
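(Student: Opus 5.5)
The claim is a direct degree count along the chain of triples, so I would prove it by induction along each component, using only two facts. The first fact is that all nucleus vertices have a common degree $k$. Indeed, for nucleus vertices $x,a$ we have $G-x\cong G-a$ by Theorem~\ref{obs_is_realization}, so $|E(G)|-\deg_G(x)=|E(G)|-\deg_G(a)$. The second fact is that in any triple $(y,\{x,z\})$ the neighborhoods of $x$ and $z$ differ exactly in $y$. Hence the one of $x,z$ adjacent to $y$ has degree exactly one more than the other. By the Corollary on endpoints, the endpoints $u_1,v_r$ of a Bi-sequence and the endpoint $u_1$ of a belt are nucleus vertices, so they have degree $k$ and serve as base cases.

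For an A-sequence (edges $u_iv_j$ with $j\ge i$), I would proceed in two steps.
\begin{enumerate}
\item In the triple $(v_i,\{u_i,u_{i+1}\})$, $u_i$ is adjacent to $v_i$ while $u_{i+1}$ is not. So $N(u_{i+1})=N(u_i)-\{v_i\}$, and induction from $u_1$ gives $\deg_G(u_i)=k-(i-1)$.
\item In the triple $(u_{i+1},\{v_i,v_{i+1}\})$, $v_{i+1}$ is adjacent to $u_{i+1}$ but $v_i$ is not, since $i<i+1$. So $N(v_i)=N(v_{i+1})-\{u_{i+1}\}$. Inducting downward from $v_r$ gives $\deg_G(v_{r-i+1})=k-(i-1)$.
\end{enumerate}
Reading off the interleaved sequence $(u_1,v_1,\dots,u_r,v_r)$ then gives the stated list. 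For an N-sequence the adjacencies are complementary within the component, by Remark~\ref{induced}. So each inclusion reverses ($N(u_{i+1})=N(u_i)\cup\{v_i\}$ and $N(v_i)=N(v_{i+1})\cup\{u_{i+1}\}$), and the same induction gives $+$ in place of $-$.

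For a belt (edges $u_iv_j$ with $j<i$), the triples $(v_i,\{u_i,u_{i+1}\})$ for $1\le i\le r$ have $u_{i+1}$ adjacent to $v_i$ and $u_i$ not. This gives $\deg_G(u_i)=k+(i-1)$ for $1\le i\le r+1$, starting from the nucleus $u_1$. The one extra step compared with sequences is the $v$'s, since only one endpoint is nucleus. Here I would start from the closing triple $(u_{r+1},\{v_r,u_1\})$: $v_r$ is adjacent to $u_{r+1}$ and $u_1$ is not, so $\deg_G(v_r)=k+1$. Then the triples $(u_{i+1},\{v_i,v_{i+1}\})$, in which $u_{i+1}v_i$ is an edge and $u_{i+1}v_{i+1}$ is not, give $\deg_G(v_i)=\deg_G(v_{i+1})+1$. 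Hence $\deg_G(v_{r-i+2})=k+(i-1)$ for $2\le i\le r+1$.

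The only real care needed is bookkeeping: in each triple, checking which outer vertex is adjacent to the middle one against the edge rule of that component type. Apart from that, every step is a one-line induction, and I expect no genuine obstacle.
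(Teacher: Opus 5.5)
Your proof is correct and follows the paper's own argument. Like the paper, you step along the triples of each component, with one unit of degree change per triple, anchored at the nucleus endpoints of common degree $k$. You also spell out the $v$-side and the belt's closing triple $(u_{r+1},\{v_r,u_1\})$, which the paper leaves to ``repeating the arguments''.

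Your restriction to $2\le i\le r+1$ for the belt's $v$'s is the right one. Your formulas give $\deg_G(v_1)=\deg_G(u_{r+1})=k+r$, so the belt's degree list should read $(k,k+r,k+1,k+(r-1),\ldots,k+(r-1),k+1,k+r,k)$. The displayed list in the statement is off by one there (it repeats the N-sequence list), and you were right not to claim it.
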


\begin{cor}\label{deg_in_obs}
In an obs* graph, all nucleus vertices have the same degrees, say $k$; and for any non-nucleus vertex $u$, $\deg_G(u) \neq k$.
\end{cor}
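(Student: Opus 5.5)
The first claim is already established before Observation~\ref{deg_seq}. For nucleus vertices $x,a$, Theorem~\ref{obs_is_realization} shows that $G$ is a realization of $G-x$. Hence $G-x\cong G-a$, and the two graphs have the same number of edges. Since $|E(G-x)|=|E(G)|-\deg_G(x)$, this gives $\deg_G(x)=\deg_G(a)=k$. So the real work is showing that no non-nucleus vertex has degree $k$.

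\textbf{Locating the non-nucleus vertices.} Let $u$ be a non-nucleus vertex. We locate $u$ in the structural picture and then read its degree from Observation~\ref{deg_seq}. Every non-nucleus vertex is the middle vertex of some triple, so it lies in some triple component $C$. By Lemma~\ref{no_odd}, $C$ is not an odd sequence. By Corollary~\ref{no_intersection_in_obs}, $C$ is the unique triple component containing $u$. By Theorem~\ref{triple_comps}, $C$ is therefore an A-sequence, an N-sequence or a belt. Moreover, $u$ is not an endpoint of $C$, since endpoints are nucleus vertices; in a belt, the only nucleus vertex is $u_1$.

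\textbf{Reading off the degree.} Observation~\ref{deg_seq} lists the degrees along $C$.
\begin{itemize}
\item In an A-sequence, $\deg_G(u_i)=\deg_G(v_{r-i+1})=k-(i-1)$. The middle vertices are $u_i$ with $i\ge 2$ and $v_j$ with $j\le r-1$. The $u_i$ have degree $k-(i-1)\le k-1$. Writing $j=r-i+1$, the $v_j$ with $j\le r-1$ correspond to $i\ge 2$, so they also have degree at most $k-1$.
\item In an N-sequence, the same argument gives degree at least $k+1$. The sign flips because $u_{i+1}$ gains the vertex $v_i$ rather than losing it.
\item In a belt, $\deg_G(u_i)=k+(i-1)$ for $2\le i\le r+1$, and $\deg_G(v_j)=k+(r+1-j)$ for $1\le j\le r$. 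All of these are at least $k+1$.
\end{itemize}
In every case $\deg_G(u)\ne k$.

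\textbf{Main obstacle.} The one point needing care is the index bookkeeping in Observation~\ref{deg_seq}: the offset for $v_j$ must never be $0$ at a middle vertex. This comes down to the endpoints being exactly the vertices of offset $0$. To confirm it, recompute from the triple relations, e.g.\ $N(v_{j-1})=N(v_j)\cup\{u_j\}$ in an A-sequence. This shows that the degree changes by exactly one at each step and is strictly monotone in the offset.
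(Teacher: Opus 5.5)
Your proof is correct and is essentially the paper's argument: nucleus degrees are equal because $G-x\cong G-a$, and the degrees of the middle vertices of A-sequences, N-sequences and belts are read off from Observation~\ref{deg_seq}, with odd sequences excluded by Lemma~\ref{no_odd}. There is one slip in your closing check: in an A-sequence the triple $(u_j,\{v_{j-1},v_j\})$ gives $N(v_{j-1})=N(v_j)\setminus\{u_j\}$, not $N(v_j)\cup\{u_j\}$ (the latter just equals $N(v_j)$). It is this removal that makes the $v_j$ degrees fall below $k$, as your bullet correctly states.
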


\begin{thm}\label{regular_gnot}
If $G$ is an obs* graph, then $K=G[G^\circ]$ is regular.
\end{thm}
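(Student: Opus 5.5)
Every nucleus vertex $x$ has the same degree $k$ in $G$, by Corollary~\ref{deg_in_obs}. So it suffices to show that, for each nucleus vertex $x$, the number of non-nucleus neighbours of $x$ does not depend on $x$. Then $\deg_K(x)=k-(\text{that number})$ is constant.

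By Remark~\ref{other_adj}, every non-nucleus vertex $c$ lies in a unique triple component $C$ (Corollary~\ref{no_intersection_in_obs}), and is almost false twin with a nucleus endpoint $a$ of $C$. For a nucleus vertex $x$ not in $C$, $x\sim c$ iff $x\sim a$. For $x\in C$, the adjacency is read off from the explicit structure of $C$. So I count non-nucleus neighbours of $x$ component by component, splitting into components containing $x$ and components not containing $x$.

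I expect the cleanest route is a double-counting or degree-sum argument using the degree profiles of Observation~\ref{deg_seq}.

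\emph{Components on $x$.} For a component $C$ on $x$, the number of middle vertices of $C$ adjacent to $x$ is determined by its type and length:
\begin{itemize}
\item in an A-sequence with endpoints $u_1=x$ and $v_r$, $x$ is adjacent to $v_1,\dots,v_{r-1}$;
\item in an N-sequence, $x$ is adjacent to none of its middle vertices;
\item in a belt, $x=u_1$ is adjacent to none of its vertices.
\end{itemize}

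\emph{Components not on $x$.} For a component $C$ not on $x$, $x$ is adjacent to all middle vertices almost-false-twin with each endpoint of $C$ that is adjacent to $x$.

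The obstacle is that these counts look genuinely $x$-dependent. To tame them I would use the isomorphisms $G-x\cong G-a$ for nucleus $x,a$. Such an isomorphism must send non-nucleus vertices of $G-x$ to vertices with the same degree in $G-x$ versus $G-a$. Hence I would compare degree multisets. Removing $x$ lowers the degree of exactly its neighbours by one. So the multiset $\{\deg_G(y)-[y\sim x] : y\neq x\}$ is independent of $x$.

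By Corollary~\ref{deg_in_obs}, in $G$ only nucleus vertices have degree $k$. After deleting $x$, the vertices of degree $k-1$ are:
\begin{itemize}
\item the nucleus neighbours of $x$;
\item the non-nucleus vertices of degree $k-1$ that are non-adjacent to $x$;
\item the non-nucleus vertices of degree $k$ (there are none).
\end{itemize}
The vertices of degree $k$ in $G-x$ are:
\begin{itemize}
\item the nucleus non-neighbours of $x$, other than $x$;
\item the non-nucleus vertices of degree $k+1$ that are adjacent to $x$.
\end{itemize}

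Equating the count of degree-$k$ vertices for $x$ and for $a$ gives
\[
(|G^\circ|-1-\deg_K(x))+n_{k+1}(x)=(|G^\circ|-1-\deg_K(a))+n_{k+1}(a),
\]
where $n_{k+1}(x)$ is the number of degree-$(k+1)$ non-nucleus neighbours of $x$. Likewise, equating counts at degree $k-1$ gives an analogous identity involving $\deg_K$ and the degree-$(k-1)$ non-nucleus non-neighbours of $x$. Combining this with the global degree equality $\deg_G(x)=\deg_G(a)$, I would try to eliminate the correction terms and conclude $\deg_K(x)=\deg_K(a)$.

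The main difficulty is precisely this elimination. I would handle it by induction on degree levels: compare the counts at each level $k\pm j$ to show that, for every $j$, $x$ and $a$ have equally many non-nucleus neighbours of degree $k\pm j$. The counts at level $k\pm j$ involve only level $k\pm j$ vertices, adjusted by adjacency to the deleted vertex, so summing over $j$ controls the total number of non-nucleus neighbours.

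If that stalls, the fallback is to use the explicit structure of the components. By Observation~\ref{deg_seq}, the degree-$(k\pm1)$ non-nucleus vertices are exactly the second vertices from the ends of sequences and belts, so their adjacency to $x$ can be computed from Remark~\ref{other_adj}. That should pin down the needed counts directly.
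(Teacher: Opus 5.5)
Your route is the paper's: compare the degree multisets of $G-x\cong G-a$, and use Corollary~\ref{deg_in_obs} to identify the degree-$k$ vertices with $G^\circ$. But the step you call ``the main difficulty'' is the whole content of the proof, and your sketch of it does not work as stated.

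Write $d_m$ for the number of vertices of $G$ of degree $m$, and $e_m(x)$ for the number of neighbours of $x$ of $G$-degree $m$. The number of vertices of degree $m$ in $G-x$ is then $d_m-[m=k]-e_m(x)+e_{m+1}(x)$. So you are wrong that the count at level $k\pm j$ ``involves only level $k\pm j$ vertices'': every level couples $m$ and $m+1$. Your own level-$k$ identity already shows this, with $n_{k+1}(x)=e_{k+1}(x)$. Comparing levels therefore only shows that each difference $e_m(x)-e_{m+1}(x)$ is independent of $x$. An induction that starts at level $k$ has no base case, because the level-$k$ identity only controls $\deg_K(x)-n_{k+1}(x)$.

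Your fallback does not help either. $n_{k+1}(x)$ counts the degree-$(k+1)$ vertices of N-sequences and belts on neighbours of $x$. By Observation~\ref{deg_seq} these are the third vertices from the ends, not the second, once the component has more than four vertices. Nothing proved at this point says these components are distributed uniformly over $G^\circ$.

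The missing idea is an anchor at an extreme degree. For the maximum degree $\Delta$ of $G$ we have $e_{\Delta+1}(x)=0$. Telescoping downward from there shows that every $e_m(x)$, in particular $e_k(x)$, is independent of $x$.

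More directly, count by a threshold instead of an exact degree. A vertex $y\neq x$ has $\deg_{G-x}(y)\ge k$ if and only if $\deg_G(y)\ge k+1$, or $\deg_G(y)=k$ and $y\not\sim x$. Hence the number of vertices of degree at least $k$ in $G-x$ is $|\{y:\deg_G(y)\ge k\}|-1-e_k(x)$. This is an isomorphism invariant of $G-x\cong G-a$. By Corollary~\ref{deg_in_obs}, $e_k(x)=\deg_K(x)$, so $\deg_K(x)=\deg_K(a)$. This is the justification that the paper's one-line claim, that $a$ and $b$ have equally many neighbours of degree $k$, silently relies on.
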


\begin{proof}
By Corollary~\ref{deg_in_obs}, all nucleus vertices have the same degree in $G$, say $k$. Let $a,b \in G^\circ$. We have $G-a \cong G-b$. It follows that the number of neighbors of $a$ and $b$ in the set of vertices of degree $k$ are the same. But by Corollary~\ref{deg_in_obs} only the nucleus vertices have degree $k$, hence $\deg_K(x)=\deg_K(a)$.
\end{proof}

\begin{cor}
If $G$ is an obs* graph and $K=G[G^\circ]$, then $K^\circ=K$.
\end{cor}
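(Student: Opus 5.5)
We must show that every vertex $x$ of $K=G[G^\circ]$ is a nucleus vertex of $K$, i.e.\ that $K-x$ is point determining. By Theorem~\ref{K_pd}, $K$ itself is point determining, so Sumner's result applies. The plan is to argue by contradiction using the regularity from Theorem~\ref{regular_gnot}.

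Suppose some $x\in V(K)$ is not in $K^\circ$. Then $K-x$ has a pair of false twins $y,z$. Since $K$ is point determining, $y$ and $z$ differ in $K$ exactly at $x$. So $(x,\{y,z\})$ is a triple of $K$ with, say, $N_K(y)=N_K(z)-\{x\}$. This gives $\deg_K(z)=\deg_K(y)+1$.

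By Theorem~\ref{regular_gnot}, $K$ is regular, so $\deg_K(y)=\deg_K(z)$, a contradiction. Hence no such $x$ exists, and $K^\circ=K$.

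The only point needing care is the first step: failure of $x\in K^\circ$ must produce false twins whose neighbourhoods in $K$ differ only in $x$. This holds because any difference other than $x$ would persist in $K-x$, and $N_K(y)\neq N_K(z)$ since $K$ is point determining. The triple condition then forces exactly one of $y,z$ to be adjacent to $x$, which is what produces the degree gap of one. The remaining steps are immediate.
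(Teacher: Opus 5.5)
Your proposal is correct and is essentially the paper's proof: both use that $K$ is point determining (Theorem~\ref{K_pd}), so a vertex $x\notin K^\circ$ yields a triple $(x,\{y,z\})$ with $N_K(y)=N_K(z)-\{x\}$, and the resulting degree gap of one contradicts the regularity of $K$ (Theorem~\ref{regular_gnot}). You additionally justify why the false twins of $K-x$ differ in $K$ exactly at $x$, a step the paper leaves implicit.
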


\begin{proof}
By Theorem~\ref{K_pd} and Theorem~\ref{regular_gnot}, $K$ is point determining and regular. If $K^\circ \neq K$ then there exist $x$, $a$ and $u$ in $K$ such that  $N_K(a)=N_K(u)-\{x\}$. Therefore, $\deg_K(a)=\deg_K(u)-1$, which contradicts Theorem~\ref{regular_gnot}.
\end{proof}

The structure of an obs* graph $G$, if all of its vertices are nucleus vertices, had been already characterized.

\begin{prop}~\cite{guzman2024full} \label{regular_v_tran}
Let $G$ be a point determining regular graph. There is a graph $H$ such that $G \in \mathrm{obs}^*(H)$ if and only if $G$ is a vertex-transitive graph.
\end{prop}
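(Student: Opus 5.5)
The plan is to prove the two implications separately. Theorem~\ref{obs_is_realization} turns membership in $\mathrm{obs}^*(H)$ into a statement about vertex-deleted subgraphs. Since $G$ is point determining and regular, the first step is to show $G^\circ=V(G)$. If $x\notin G^\circ$, then $G-x$ has false twins $y,z$, and exactly one of them, say $z$, is adjacent to $x$. Because $N_G(y)=N_G(z)-\{x\}$, we get $\deg_G(y)=\deg_G(z)-1$, which contradicts regularity. (This is the same argument as in the corollary stating $K^\circ=K$.) So every vertex of $G$ is a nucleus vertex, and every $G-x$ is point determining.

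For the direction ``vertex-transitive $\Rightarrow$ obs*'', take any vertex $x_0$ and put $H=G-x_0$. For each $x\in V(G)$, vertex-transitivity supplies an automorphism $\sigma$ with $\sigma(x_0)=x$, and $\sigma$ restricts to an isomorphism $G-x_0\cong G-x$. Since $G^\circ=V(G)$, this shows that $G$ is a realization of $H$: it is point determining and $G-x\cong H$ for every $x\in G^\circ$. By Theorem~\ref{obs_is_realization}, $G\in \mathrm{obs}^*(H)$.

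For the converse, suppose $G\in\mathrm{obs}^*(H)$. By Theorem~\ref{obs_is_realization}, $G-x\cong H$ for every $x\in G^\circ=V(G)$, so all vertex-deleted subgraphs are isomorphic. The task is to upgrade this to vertex-transitivity, and I expect this to be the main obstacle: isomorphic vertex-deleted subgraphs do not in general give an automorphism of $G$ carrying one vertex to the other. Take $a,b\in V(G)$ and an isomorphism $\psi:G-a\to G-b$. The natural attempt is to extend $\psi$ by $a\mapsto b$; this works exactly when $\psi(N_G(a))=N_G(b)$. I would try to force this using point determinacy together with a counting or rigidity argument. The non-neighbors of $a$ in $G-a$ have degree $k$ there and the neighbors have degree $k-1$, and likewise for $b$ in $G-b$. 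An isomorphism preserves degrees, so $\psi$ maps the neighbors of $a$ onto the degree-$(k-1)$ vertices of $G-b$, which are exactly $N_G(b)$. (If $k-1$ and $k$ did not separate the two classes, the graph would be degenerate; this happens only when $k=0$, where $G$ is $K_1$ since it is point determining, and then the claim is trivial.)

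Thus, after checking that degrees separate $N_G(a)$ from the rest of $G-a$, the extension $\psi\cup\{a\mapsto b\}$ preserves all adjacencies and non-adjacencies. It is therefore an automorphism of $G$ sending $a$ to $b$, and $G$ is vertex-transitive. The only delicate point is the degree-separation remark, which regularity makes straightforward. If that step failed, I would fall back on Kelly-type reconstruction counting.
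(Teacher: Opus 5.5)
Your proof is correct. The paper only cites this result from Guzm\'an-Pro and gives no proof, but your argument is the standard one, and it uses the same ingredients the paper uses elsewhere:
\begin{enumerate}
\item the degree-difference argument from the corollary $K^\circ=K$, which gives $G^\circ=V(G)$;
\item Theorem~\ref{obs_is_realization};
\item extending an isomorphism $G-a\to G-b$ by $a\mapsto b$ once it carries $N_G(a)$ onto $N_G(b)$, as in the proof of Theorem~\ref{the_big_thm}.
\end{enumerate}
Your hedge about $k=0$ is unnecessary. Since $k-1\neq k$ always, degrees separate $N_G(a)$ from the rest of $G-a$ unconditionally; when $k=0$ the neighbour class is simply empty. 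So neither that caveat nor the Kelly-type fallback is needed.
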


As stated in Corollary~\ref{deg_in_obs}, $G$ cannot be regular if it has any non-nucleus vertices.

The structure of Bi-sequences and belts change when some nucleus vertex $x$ is removed from a point determining graph $G$. After removing $x$, only triples containing $x$ are destroyed, and triples which do not contain $x$ are again triples in $G-x$. But some new triples may appear. Therefore we have the following proposition.

\begin{prop} \label{H_nucleuses}
Let $G$ be a point determining graph and $x \in G^\circ$. A nucleus vertex in $G-x$ is either a nucleus vertex of $G$ or a successor of $x$ in a triple component $C$.
\end{prop}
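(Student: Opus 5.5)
We argue by contraposition. Fix $x\in G^\circ$, so $G-x$ is point determining, and let $y\neq x$ be a vertex of $G$ with $y\notin G^\circ$. We must show that either $y$ is still non-nucleus in $G-x$, or $y$ is a successor of $x$ in some triple component $C$, i.e.\ $y$ is the vertex adjacent to $x$ in the defining sequence of $C$, with $x$ an endpoint of $C$ (recall that endpoints are nucleus vertices).

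\textbf{Step 1: pick a triple with middle vertex $y$.} Since $y\notin G^\circ$, the graph $G-y$ has a pair of false twins $z,w$. Because $G$ is point determining, $z$ and $w$ differ in $G$ exactly at $y$. Hence $T=(y,\{z,w\})$ is a triple of $G$.

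\textbf{Step 2: the case $x\notin\{z,w\}$.} In $G-x$ the neighbourhoods of $z$ and $w$ still differ only at $y$. Therefore $T$ remains a triple of $G-x$. Removing $y$ from $G-x$ then makes $z$ and $w$ false twins, so $y$ is not a nucleus vertex of $G-x$. This is the general principle stated just before the proposition: triples avoiding $x$ survive the deletion of $x$.

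\textbf{Step 3: the case $x\in\{z,w\}$, say $x=z$.} Then $(y,\{x,w\})$ is a triple of $G$ containing $x$, and it lies in some triple component $C$.
\begin{itemize}
\item Suppose $x$ is an endpoint of $C$. By Theorem~\ref{triple_comps}, $C$ is a type-one, type-two or type-three component. In each of these types, the only triples containing an endpoint $x$ in a non-middle position are those whose middle vertex is adjacent to $x$ in the sequence. In a belt there are two such vertices, the second and the penultimate one. So $y$ is a successor of $x$ in $C$, as required.
\item Suppose $x$ is not an endpoint of $C$. Then $x$ is the middle vertex of some triple of $C$, so $x\notin G^\circ$, which contradicts $x\in G^\circ$.
\end{itemize}
The second bullet uses the fact that every non-endpoint vertex of a Bi-sequence, odd sequence or belt is the middle vertex of one of its listed triples. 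This can be read off directly from the definitions.

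\textbf{Main obstacle.} The delicate step is the first bullet of Step 3. We must check carefully, using the explicit list of triples in each type from Theorem~\ref{triple_comps}, that a triple containing the endpoint $x$ as a non-middle vertex has as its middle vertex exactly the neighbour of $x$ in the sequence. We must also settle the convention for which neighbours count as "successors" in a belt. Once this is verified, the contrapositive gives the statement: any nucleus vertex of $G-x$ is either in $G^\circ$ or a successor of $x$ in some triple component $C$.
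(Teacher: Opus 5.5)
Your proof is correct and follows the paper's route. The paper justifies the proposition only by the remark preceding it: deleting $x$ destroys exactly the triples containing $x$, so a non-nucleus vertex of $G$ can become a nucleus vertex of $G-x$ only through a triple involving $x$. You make explicit the case check the paper leaves implicit, namely that $x\in G^\circ$ cannot be a middle vertex, so by Theorem~\ref{triple_comps} the middle vertex of such a triple is a successor of $x$.
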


\begin{lem}~\cite{feder2008realizations} \label{small_c}
Suppose $G$ is a realization of $H$. Suppose further that $x$ is the nucleus vertex of a small belt $C$ of $G$. Then all the triples of $H$ are also triples of $G$.
\end{lem}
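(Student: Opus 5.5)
The plan is to take $H=G-x$, which is legitimate because $x\in G^\circ$ and $G$ is a realization of $H$. I would prove directly that any triple of $H$ is a triple of $G$.

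First I fix the structure of the small belt. With $r=1$, a small belt is $C=(u_1,v_1,u_2,u_1)$. Its triples are $(v_1,\{u_1,u_2\})$ and $(u_2,\{v_1,u_1\})$, and its only edge is $u_2v_1$. The nucleus vertex is $x=u_1$, and $v_1,u_2$ are its almost false twins. Hence
\[
N_G(v_1)=N_G(x)\cup\{u_2\},\qquad N_G(u_2)=N_G(x)\cup\{v_1\},
\]
and $x$ is adjacent to neither $v_1$ nor $u_2$. In particular $v_1$ and $u_2$ are adjacent true twins, and both lie in $H$.

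Now let $(a,\{b,c\})$ be a triple of $H$. Then $N_H(b)$ and $N_H(c)$ differ exactly in $a$. Since $N_G(b)$ and $N_G(c)$ agree with these neighbourhoods except possibly at $x$, the triple is a triple of $G$ precisely when $x$ is adjacent to both or to neither of $b,c$. So suppose, for contradiction, that $xb\in E(G)$ and $xc\notin E(G)$. The key tool is that membership in $N(x)$ is mirrored by $v_1$ and $u_2$. I would split into three cases according to how $\{b,c\}$ meets $\{v_1,u_2\}$.

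\textbf{Case 1: $\{b,c\}\cap\{v_1,u_2\}=\emptyset$.} Then $b\in N(x)$ gives $b\sim v_1$ and $b\sim u_2$. Likewise $c\notin N(x)$ gives $c\not\sim v_1$ and $c\not\sim u_2$. So $N_H(b)$ and $N_H(c)$ differ at both $v_1$ and $u_2$. These are two distinct vertices of $H$, contradicting that they differ only at $a$.

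\textbf{Case 2: exactly one of $b,c$ lies in $\{v_1,u_2\}$.} That vertex is not adjacent to $x$, so it must be $c$; say $c=v_1$ by symmetry. Then $b\in N(x)\subseteq N(v_1)$, so $b\in N_H(c)$. But $b\notin N_H(b)$. The neighbourhoods therefore differ at $b$, forcing $a=b$, which is impossible since the middle vertex of a triple is distinct from the pair.

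\textbf{Case 3: $\{b,c\}=\{v_1,u_2\}$.} This cannot occur, because $x$ is adjacent to neither vertex. Even without using $x$, $v_1\in N_H(u_2)\setminus N_H(v_1)$ and $u_2\in N_H(v_1)\setminus N_H(u_2)$. So the neighbourhoods differ in two vertices of $H$, and they are not a triple pair.

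In every case we reach a contradiction, so every triple of $H$ is a triple of $G$. The main point needing care is the case analysis when $b$ or $c$ belongs to the belt itself. There the mirroring argument fails, and one must instead use that a vertex is never its own neighbour, together with the true-twin adjacency $v_1u_2$.
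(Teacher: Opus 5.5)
Your proof is correct and complete. The paper gives no proof of this lemma; it cites Feder and Hell. So there is no in-paper argument to compare against, and your case analysis stands as a valid self-contained verification.

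Your argument uses only three things:
\begin{itemize}
\item the local identities $N_G(v_1)=N_G(x)\cup\{u_2\}$ and $N_G(u_2)=N_G(x)\cup\{v_1\}$ coming from the small belt;
\item the identification of $H$ with $G-x$;
\item the observation that a triple of $G-x$ fails to be a triple of $G$ exactly when $x$ separates $b$ from $c$.
\end{itemize}
The realization hypothesis therefore plays no real role. You have in fact shown that for any vertex $x$ of a point determining graph that is the base of a small belt, $G-x$ has no new triples.

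You could also merge Cases 2 and 3. Note once that the two vertices $b,c$ of a triple are never adjacent, since adjacent vertices cannot be false twins. In both cases you have exhibited $b\sim c$: via $b\in N_G(x)\subseteq N_G(v_1)$ in Case 2, and via the edge $v_1u_2$ in Case 3.
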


\begin{assum}
{\normalfont
From Lemma~\ref{merge} untill Lemma~\ref{new_on_y} we employ the following assumptions. The graph $G$ is a realization of $H$, i.e. $G \in \mathrm{obs}^*(H)$; $x_0$ is a nucleus vertex of $G$ and $\deg_G(x_0)=k$, we have $H \cong G-x_0$, so we consider $H$ as the subgraph $G-x_0$ of $G$. Let $C_0$ be a triple component of $G$ that has $x_0$ as an endpoint. Let $(y_0,\{z_0,x_0\})$ be a triple of $C_0$. First suppose that $C_0$ is not a small belt. Then some triple of $C_0$ remains in $C-x_0$. The triples of $C_0-x_0$ are contained in some triple component $C'_0$ of $G-x_0$. If $C_0$ is a small belt, according to Lemma~\ref{small_c} there exists no triple component $C'_0$. Note that in this case, $y_0$ and $z_0$ are true twins.}
\end{assum}

We call any triple of $H$ which is not a triple of $G$, a new triple of $H$.

\begin{lem}~\cite{feder2008realizations} \label{merge}
Let $T$ be any new triple of $H$. Then

\begin{enumerate}
\item $T$ contains one of $y_0,z_0$.
\item \label{circulation} If $T$ is in $C'_0$, then $C'_0$ is a type-three triple component (belt), and $T=(u_{r+1},\{v_r,u_1\})$ with $y_0=u_1,z_0=v_1$.
\item If $T$ is not in $C'_0$, then either
    \begin{enumerate}
    \item \label{seq_on_y} $T=(t,\{w,y_0\})$ with $y_0$ a nucleus vertex in $H$ and $t$ not a vertex of $C'_0$; the vertex $w$ is either not in $C'_0$ or it is the other endpoint of $C'_0$; or
    \item \label{seq_with_z} $T=(z_0,\{t',w'\})$ where $z_0$ is a non-nucleus vertex in $H$, and $C'_0$ is a small type-two triple component (small sequence); neither $t'$ nor $w'$ is in $C'_0$.
    \end{enumerate}
\end{enumerate}
\end{lem}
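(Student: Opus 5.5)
The plan is to analyse directly when a triple of $G$ can fail to be a triple of $H=G-x_0$, and when a triple of $H$ can fail to be a triple of $G$. If $T=(t,\{a,b\})$ is a triple of $H$, then $N_H(a)$ and $N_H(b)$ differ exactly in $t$. In $G$ the only additional vertex is $x_0$. So $T$ is a new triple precisely when $x_0$ is adjacent to exactly one of $a,b$. In that case $a,b$ differ in $G$ exactly on $\{t,x_0\}$.

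For part 1 we use that $G$ is point determining and that $x_0$ is a nucleus vertex. By Lemma~\ref{edame} and the triple $(y_0,\{z_0,x_0\})$, $z_0$ agrees with $x_0$ except at $y_0$. Suppose $T$ contains neither $y_0$ nor $z_0$. Then the pair $\{a,b\}$ differs on $x_0$ and also on $t$. If $x_0\in N(a)\setminus N(b)$, then since $N(z_0)\triangle N(x_0)=\{y_0\}$, the pair $a,b$ also differs on $z_0$ unless one of $a,b$ equals $y_0$. This forces $z_0=t$, so $T$ contains $z_0$, a contradiction. This step relies on the symmetry that $z_0$ and $x_0$ are false twins in $G-y_0$, so adjacency to $x_0$ and to $z_0$ coincide away from $y_0$.

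For parts 2 and 3 we split according to whether $T$ shares two vertices with a triple of $C_0-x_0$, i.e.\ lies in $C'_0$. If it does, Theorem~\ref{triple_comps} restricts $C'_0$ to type one, two or three, and Lemma~\ref{no_odd} applied to $H$ is not available, since $H$ need not be a realization. So we argue as follows. $C_0-x_0$ is a sequence whose endpoint at the $x_0$-end is $y_0$, followed by $z_0$. A new triple extending it must use $y_0$ or $z_0$ (part 1) and must close the sequence. The adjacency pattern of a Bi-sequence then forces the closing triple to be $(u_{r+1},\{v_r,u_1\})$, giving a belt with $y_0=u_1$ and $z_0=v_1$.

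If $T$ is not in $C'_0$, then either $y_0$ or $z_0$ appears in $T$ as an end of the pair or as the middle. In the first case we show that $y_0$ is nucleus in $H$ using Proposition~\ref{H_nucleuses}, and that $t\notin C'_0$. In the second case, $z_0$ lying in two triple components of $H$ forces, by the Feder--Hell analysis behind Lemma~\ref{no_c_intersection}, that $C'_0$ is a small sequence with $t',w'\notin C'_0$.

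The main obstacle is the bookkeeping in parts 2 and 3: ruling out every other placement of $y_0$ or $z_0$ inside $T$ by the adjacency patterns of Bi-sequences and belts. Since this lemma is quoted from \cite{feder2008realizations}, I would ultimately cite it rather than reprove it in full.
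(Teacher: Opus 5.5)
The paper gives no proof of this lemma; it quotes it from \cite{feder2008realizations}. So your fallback of citing it matches the paper's treatment. Your part 1 is a correct, self-contained proof. It uses only $N_G(z_0)\triangle N_G(x_0)=\{y_0\}$, which is just the definition of the triple $(y_0,\{z_0,x_0\})$, not Lemma~\ref{edame}.

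As an argument for parts 2 and 3, however, your sketch has genuine gaps. In part 2, ``must close the sequence'' does not follow from part 1, for two reasons. First, you never exclude a new triple $(y_0,\{z_0,w\})$ at the $y_0$-end, which contains both $y_0$ and $z_0$. Second, when $C_0$ has order $4$, so that $C_0-x_0=(y_0,z_0,a_4)$, an extension $(a_4,\{z_0,w\})$ by a fresh vertex $w$ already contains $z_0$, so part 1 says nothing there.

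The missing ingredient is a second computation in the same spirit. If $(m,\{z_0,q\})$ is a new triple, then $N_G(z_0)\triangle N_G(q)=\{m,x_0\}$, hence $N_G(q)\triangle N_G(x_0)=\{m,x_0\}\triangle\{y_0\}$. This set contains $x_0$, so $q\in N_G(x_0)$, and therefore $q$ itself lies in the set.
\begin{itemize}
\item For $m=y_0$ the set is $\{x_0\}$, which cannot contain $q$, a contradiction.
\item For $m\neq y_0$ it forces $q=y_0$. With $m=a_4$ this is exactly the closing triple of the small belt $(y_0,z_0,a_4,y_0)$.
\end{itemize}

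In part 3, your split into ``end of the pair'' versus ``middle'' tacitly identifies these cases with ``$y_0$ in the pair'' and ``$z_0$ the middle''. You must also rule out $y_0$ being the middle of $T$, and $z_0$ lying in its pair. The same computation handles both, where $a_4$ denotes the vertex after $z_0$ in $C_0$. If $C_0$ is a small belt there are no new triples at all, by Lemma~\ref{small_c}.
\begin{itemize}
\item If $y_0$ is the middle, your part 1 reasoning puts $z_0$ in the pair, and $m=y_0$ is impossible.
\item If $z_0$ is in the pair, then $T=(m,\{z_0,y_0\})$ shares two vertices with $(z_0,\{y_0,a_4\})$, so $T$ lies in $C'_0$.
\end{itemize}

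Next, Proposition~\ref{H_nucleuses} points the wrong way. It only says which vertices are \emph{allowed} to be nuclei of $H$, so it cannot show $y_0\in H^\circ$. What you need is that no triple of $H$ has middle $y_0$. There is no old one, because in a realization every vertex is the middle of at most one triple (Lemmas~\ref{no_c_intersection} and~\ref{no_odd}). There is no new one, by the computation above.

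Finally, $t\notin C'_0$ and the clause on $w$ are asserted, not argued. Both follow from Lemma~\ref{no_c_intersection} applied in the point determining graph $H$. A non-nucleus vertex of $H$ lying in both $C'_0$ and the component of $T$ forces both components to be small sequences, so $C'_0=(y_0,z_0,a_4)$. Then use that any triple containing two of $y_0,z_0,a_4$ is $R$-related to $(z_0,\{y_0,a_4\})$.
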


\begin{lem} \label{nuc_y}
The vertex $y_0$ is a nucleus vertex of $H$. 
\end{lem}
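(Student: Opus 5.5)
We argue by contradiction, working in $H = G - x_0$ under the standing Assumption. Suppose $y_0 \notin H^\circ$. Then $H - y_0$ has a pair of false twins, so $y_0$ is the middle vertex of some triple $T = (y_0,\{a,b\})$ of $H$. By Corollary~\ref{no_intersection_in_obs} and the structure of $C_0$, the vertex $y_0$ is the successor of $x_0$ in $C_0$. Write $C_0 = (x_0, y_0, z_0, \ldots)$, with the triple $(y_0,\{x_0,z_0\})$ in $G$. We then split into cases according to whether $T$ is a triple of $G$ or a new triple of $H$.

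\emph{Case 1: $T$ is already a triple of $G$.} Then $T$ is a triple of $G$ with middle vertex $y_0$. Since $y_0$ is a non-nucleus vertex of $G$, Corollary~\ref{no_intersection_in_obs} puts $T$ in $C_0$. The only triple of $C_0$ with middle vertex $y_0$ is $(y_0,\{x_0,z_0\})$, and it contains $x_0 \notin H$. So $T$ cannot be this triple, a contradiction. One subtlety is that in a belt of length three the vertex $y_0$ might appear as a middle vertex twice. In the small-belt case, however, Lemma~\ref{small_c} says that $H$ has no new triples. The same reasoning then shows that every triple with middle $y_0$ lies in $C_0$ and involves $x_0$, so again $y_0 \in H^\circ$.

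\emph{Case 2: $T$ is a new triple of $H$.} We invoke Lemma~\ref{merge}, which lists the possible forms of $T$.
\begin{enumerate}
\item If $T \in C'_0$, then part~\ref{circulation} gives $T = (u_{r+1},\{v_r,u_1\})$ with $y_0 = u_1$. So $y_0$ is an endpoint and not the middle vertex, a contradiction unless $u_{r+1} = y_0$, which is impossible since $u_1 \neq u_{r+1}$.
\item If $T$ has the form~\ref{seq_on_y}, $T = (t,\{w,y_0\})$ with $t \notin C'_0$, then the middle vertex is $t \neq y_0$. Indeed, $y_0$ lies in $C'_0$ whenever $C'_0$ exists, and the alternative is excluded.
\item If $T$ has the form~\ref{seq_with_z}, $T = (z_0,\{t',w'\})$, then the middle vertex is $z_0 \neq y_0$.
\end{enumerate}
In every subcase the middle vertex of $T$ is not $y_0$, contradicting our choice of $T$.

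The main obstacle is the bookkeeping in Case~2a: we must verify that $y_0 \in C'_0$ when $C'_0$ exists, so that $t \neq y_0$. To see this, note that when $C_0$ is not a small belt there is a triple of $C_0$ containing $y_0$ but not $x_0$. For instance, $(z_0,\{y_0, \cdot\})$ survives in $H$ whenever $C_0$ has at least four vertices. When $C_0$ has exactly three vertices it is a small sequence, which is excluded by Lemma~\ref{no_odd}, or a small belt, which was handled in Case~1.

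Actually, Lemma~\ref{merge} asserts more directly, in part~\ref{seq_on_y}, that ``$y_0$ is a nucleus vertex in $H$'' whenever such a triple exists. We will double-check that this does not make the argument circular. Combining the cases shows that no triple of $H$ has middle vertex $y_0$, hence $H - y_0$ is point determining and $y_0 \in H^\circ$.
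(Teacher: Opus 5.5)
Your proof is correct and follows the paper's argument. A triple with middle vertex $y_0$ cannot be an old triple of $G$, by Corollary~\ref{no_intersection_in_obs}, since the only one in $C_0$ contains $x_0$; and a new triple is ruled out via Lemma~\ref{merge}. The one difference is that you check all three alternatives of Lemma~\ref{merge} directly, whereas the paper first places the new triple in $C'_0$ ``by the possible shapes of triple components'' and then uses part~\ref{circulation}. Your ``main obstacle'' is vacuous: in $(t,\{w,y_0\})$ the middle vertex is distinct from the two twins by definition, and likewise no vertex of a small belt is a middle vertex twice.
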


\begin{proof}
Suppose, to the contrary that $y_0$ is a non-nucleus vertex in $H$. Therefore $y_0$ is the middle vertex of some triple $T_{y_0}=(y_0,\{p,q\})$ in $H$. The triple $T_{y_0}$ is not in $G$ because by Corollary~\ref{no_intersection_in_obs} no vertex can be the middle vertex of two triples in an obs* graph $G$. Thus $T_{y_0}$ is a new triple of $H$ and therefore by Lemma~\ref{small_c}, $C_0$ is not a small belt. By the possible shapes of triple components, $T_{y_0}$ is in $C'_0$ but by Lemma~\ref{merge} if there is a new triple of $H$ in $C'_0$, then $y_0$ is a nucleus vertex.
\end{proof}

\begin{lem} \label{cprime_is_c_minus_x}
The triple component $C'_0$ has no vertices more than $C_0-x_0$ (though there may appear some new triples in $C'_0$).
\end{lem}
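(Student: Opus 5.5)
We argue by contradiction. Suppose $C'_0$ contains a vertex $w$ that does not lie in $C_0-x_0$. Since $C'_0$ is a triple component of $H=G-x_0$, it is the $R'$-closure of the triples of $C_0-x_0$. So we can follow a chain of triples of $H$, each sharing two vertices with the previous one, from a triple of $C_0-x_0$ to a triple containing $w$. Take the first triple $T$ in this chain that is not a triple of $C_0-x_0$. Then $T$ shares at least two vertices with a triple of $C_0-x_0$, so it has two vertices in $C_0-x_0$. We first claim that $T$ must be a new triple of $H$. Indeed, if $T$ were a triple of $G$, it would share two vertices with a triple of $C_0$. By the definition of triple components in $G$, $T$ would then belong to $C_0$, and since $T$ avoids $x_0$ it would be a triple of $C_0-x_0$. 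This contradicts the choice of $T$.

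Next we apply Lemma~\ref{merge} to the new triple $T$, which lies in $C'_0$ by construction. By part~\ref{circulation}, $C'_0$ is a belt and $T=(u_{r+1},\{v_r,u_1\})$ with $y_0=u_1$ and $z_0=v_1$. So $T$ is the closing triple of a belt whose underlying sequence $(u_1,v_1,\dots,u_{r+1})$ is formed by the remaining triples. We must show that every vertex of this belt already lies in $C_0-x_0$. The key step is to check that the triples $(u_{i+1},\{v_i,v_{i+1}\})$ and $(v_i,\{u_i,u_{i+1}\})$ of $C'_0$ other than the closing one are all old triples, i.e.\ triples of $G$. This holds because Lemma~\ref{merge}\ref{circulation} says the only new triple inside $C'_0$ is this particular closing triple. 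Those old triples are linked by $R$ to triples of $C_0-x_0$, so they lie in $C_0$ and not in any other component of $G$. Hence all $u_i, v_i$ lie in $C_0-x_0$, and the closing triple $T$ uses only the vertices $u_{r+1}, v_r, u_1$, which are already among them. Therefore $w$ lies in $C_0-x_0$, a contradiction.

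The point I expect to need the most care is ruling out a chain in which $T$ first reaches a new vertex through a triple of type~\ref{seq_on_y} or~\ref{seq_with_z} of Lemma~\ref{merge}. Those triples are explicitly stated not to be in $C'_0$, so they cannot occur in a chain inside $C'_0$. The only delicate case is \ref{seq_on_y}, where $w$ may be "the other endpoint of $C'_0$". There the endpoint is already a vertex of $C_0-x_0$, so no new vertex appears.

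When $C_0$ is a small belt, there is no $C'_0$ (the Assumption together with Lemma~\ref{small_c}), and the statement is vacuous. Finally, Corollary~\ref{no_intersection_in_obs} ensures that the old triples in the chain cannot pass through a middle vertex into a different component of $G$. This keeps the identification of old triples with triples of $C_0$ clean.
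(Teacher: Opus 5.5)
Your proof is correct, and it takes a different route from the paper's.

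\textbf{The paper's route.} The paper argues structurally. Any extra vertices of $C'_0$ would have to be appended at an end of the sequence $C_0-x_0$. Since $y_0$ is a nucleus vertex of $H$ (Lemma~\ref{nuc_y}), it is an endpoint of $C'_0$, so the extension must happen at the far end $a$. The belt case is excluded separately. Hence the nucleus vertex $a$ becomes the middle vertex of a triple $T_0=(a,\{b,c\})$ of $C'_0$, which is therefore new. The contradiction then comes from part 1 of Lemma~\ref{merge} together with an order count. The small-belt case is also argued explicitly.

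\textbf{Your route.} You walk along an $R$-chain inside $C'_0$ and note that the first triple leaving $C_0-x_0$ must be new, because an old triple sharing two vertices with a triple of $C_0$ lies in $C_0$. You then invoke part 2 of Lemma~\ref{merge}: the only new triple that can lie in $C'_0$ is the closing triple of a belt with endpoint $y_0$. The remaining triples of that belt form an $R$-chain of old triples containing the triples of $C_0-x_0$. So they lie in $C_0$, and they already cover every vertex of the belt.

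\textbf{What this buys.} Your argument is uniform in the type of $C_0$. It needs neither Lemma~\ref{nuc_y} nor any description of how a sequence can be extended. It is also more robust at one point. When $|C_0|=4$, we have $C_0-x_0=(y_0,z_0,a)$, and the paper's would-be triple is $T_0=(a,\{z_0,b\})$. This triple does contain $z_0$, so part 1 plus the order count does not by itself give a contradiction. What disposes of this case is part 2 (a new triple in $C'_0$ must contain $y_0$), exactly as you use it.

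\textbf{Minor remarks.} Your closing remarks about cases 3(a)/3(b) and about Corollary~\ref{no_intersection_in_obs} are unnecessary, since the relation $R$ depends only on shared vertices. Treating the small-belt case as vacuous is consistent with the Assumption.
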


\begin{proof}
Suppose, to the contrary, that $C'_0$ has a vertex off $C_0-x_0$. If $C_0$ is a small belt, then by Lemma~\ref{nuc_y}, $y_0$ and $z_0$ are nucleus vertices. Then $C'_0$ must be a Bi-sequence or an odd sequence with endpoints $y_0$ and $z_0$. Let $T_{C'_0}$ be the triple of $C'_0$ that contains $y_0$. Thus, $T_{C'_0}$ is not a triple in $G$, because it is not in $C_0$ and by Corollary~\ref{no_intersection_in_obs} the non-nucleus vertex $y_0$ cannot lie in two triple components in $G$. But this contradicts Lemma~\ref{small_c}. Hence $C_0$ is not a small belt.

If $C_0$ is a Bi-sequence with one endpoint $x_0$, let $a$ be the other endpoint of $C_0$. If $C_0$ is a belt that is not small, set $a$ to be the successor of $x_0$ in the opposite direction of $y_0$. The vertices that are in $C'_0$ but not in $C_0-x_0$ must be added to the ends of $C_0-x_0$ to form a triple component in $H$. By Lemma~\ref{nuc_y}, $y_0$ is a nucleus vertex of $H$ and therefore an endpoint of $C'_0$; so these new vertices are added to the $a$-end of $C_0-x_0$. But this cannot happen if $C_0$ is a belt, otherwise $a$ is also a nucleus vertex of $G-x_0$. Thus $C_0$ is a Bi-sequence and $a$ is the other endpoint of it.

So, $a$ and some vertex $b$ out of $C_0-x_0$ are in a triple $T_0=(a,\{b,c\})$ of $C'_0$. Since $T_0$ contains $a$, it is not a triple of $C_0$. Since $a$ is a nucleus vertex of $G$, it cannot be the middle vertex of a triple in $G$, so $T_0$ is not a triple in $G$.
But by Lemma~\ref{merge}, a new triple of $H$ which is in $C'_0$ contains at least one of $y_0$ or $z_0$. But the order of $C_0-x_0$ is at least $3$, because the order of $C_0$ is at least $4$. So, $T_0$ cannot contain any of $y_0$ and $z_0$, which is a contradiction.
\end{proof}

So, the triple component $C'_0$ has the same vertices as $C_0-x_0$. However, if $C_0$ was an A-sequence $C_0=(u_1,v_1,u_2,v_2, \ldots, u_r, v_r)$ (with $u_1=x_0$, $v_1=y_0$ and $u_2=z_0$) it may now become the belt $C'=(u'_1,v'_1, \ldots, u'_r, u'_1)$ (with $u'_i=v_i$, $v'_{i-1}=u_i$ for all $2 \leq i \leq r$) in $H$, after gaining one additional triple $T=(u'_{r+1},\{v'_r,u'_1\})$. Part~\ref{seq_on_y} of Lemma~\ref{merge} says that there could be other triple components on $y_0$, which is the new nucleus vertex appearing in $G-x_0$.

In Lemma 2.8 of~\cite{feder2008realizations} (Lemma~\ref{merge} in the present paper) the following fact is not mentioned.

\begin{lem} \label{no_other_end_of_c}
In Lemma~\ref{merge} part~\ref{seq_on_y}, the vertex $w$ is not the other endpoint of $C'_0$ (one endpoint is $y_0$).
\end{lem}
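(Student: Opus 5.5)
The plan is to argue by contradiction. Suppose $T=(t,\{w,y_0\})$ is a new triple of $H$ as in part~\ref{seq_on_y} of Lemma~\ref{merge}, with $w$ the other endpoint of $C'_0$. Then $N_H(w)$ and $N_H(y_0)$ differ in exactly one vertex, namely $t$. Lemma~\ref{merge} says $t$ is not a vertex of $C'_0$. So it suffices to find a vertex of $C'_0$ that lies in exactly one of $N_H(w)$ and $N_H(y_0)$; such a vertex would be a second point of difference, which is impossible.

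\textbf{Identifying the vertex set and bipartition.} If $C_0$ is a small belt there is no $C'_0$, so there is nothing to prove. Otherwise, by Lemma~\ref{cprime_is_c_minus_x}, $C'_0$ has exactly the vertex set of $C_0-x_0$. Since $H=G-x_0$ is an induced subgraph of $G$, all adjacencies among these vertices are read off from $C_0$ in $G$. By Lemma~\ref{no_odd} and the discussion after Corollary~\ref{no_intersection_in_obs}, $C_0$ is an A-sequence, an N-sequence, or a belt. If $C'_0$ has become a belt in $H$ (the situation described after Lemma~\ref{cprime_is_c_minus_x}), its endpoints coincide at $y_0$, so $w\neq y_0$ is impossible. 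We may therefore assume $C'_0$ is a sequence with endpoints $y_0$ and $w\neq y_0$.

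\textbf{Case analysis.} Write $C_0=(u_1,v_1,\ldots)$ with $x_0=u_1$, $y_0=v_1$, $z_0=u_2$, reversing $C_0$ if needed.
\begin{enumerate}
\item If $C_0$ is an A-sequence or an N-sequence, then $w=v_r$ with $r\ge 2$, since $C_0$ has order at least $4$ when $C_0-x_0$ carries a triple. Using the adjacency rule $u_iv_j$ for $j\ge i$ (respectively $j<i$), the vertex $z_0=u_2$ is adjacent to exactly one of $v_1$ and $v_r$. Indeed, in the A-sequence $u_2\not\sim v_1$ but $u_2\sim v_r$, and in the N-sequence the reverse holds. Since $z_0\in C'_0$ and $z_0\neq t$, this gives a second difference, a contradiction.
\item If $C_0$ is a belt that is not small, the other endpoint $w$ of $C_0-x_0$ is the successor of $x_0$ on the other side, namely $u_{r+1}$. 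In a belt $u_{r+1}$ is adjacent to every $v_j$, in particular to $v_1=y_0$. Then $w\in N_H(y_0)$ but $w\notin N_H(w)$, so $w$ itself is a difference between the two neighbourhoods. As $w\in C'_0$, we have $w\ne t$, again a contradiction.
\end{enumerate}

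\textbf{Main obstacle.} The step needing most care is the bookkeeping of which vertex is the "other endpoint" in each case, including the reversed orientations. In particular, for the belt one must correctly identify $w$ as the neighbour of $x_0$ opposite $y_0$, and for sequences one must ensure that $z_0$ is genuinely distinct from both $y_0$ and $w$. That last point uses $r\ge 2$, which follows because $C_0$ is not small and hence, by the proof of Lemma~\ref{cprime_is_c_minus_x}, $C_0-x_0$ has order at least $3$.
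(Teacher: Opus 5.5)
Your proof is correct, but it uses a different vertex to separate $y_0$ from $w$ than the paper does.

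\textbf{The paper's route.} It argues in $G$. Because $T$ is a new triple, $N_G(y_0)\triangle N_G(w)=\{x_0,t\}$. The deleted endpoint $x_0$ is adjacent to both $y_0$ and $w$ in an A-sequence, and to neither in an N-sequence or a belt. So $x_0$ cannot lie in this symmetric difference, which is a contradiction.

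\textbf{Your route.} You stay inside $H$ and exhibit a vertex of $C'_0$ that separates $y_0$ from $w$. For a Bi-sequence this is $z_0=u_2$. For a belt it is $w$ itself, since $w=u_{r+1}\sim v_1=y_0$. You then invoke the clause $t\notin C'_0$ of Lemma~\ref{merge}.

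\textbf{What each buys.} The paper's version needs only the newness of $T$ and one uniform fact about how $x_0$ attaches to the two ends of $C_0$; it does not use $t\notin C'_0$. Yours never uses newness directly, and it proves slightly more: $\{y_0,w\}$ cannot be the pair of \emph{any} triple of $H$ whose middle vertex lies outside $C'_0$. In the belt case it cannot be the pair of any triple at all, since adjacent vertices are never false twins in $H-t$.

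You also treat explicitly the possibility that $C'_0$ has become a belt, which the paper leaves implicit. One wording fix there: you mean that $w$ would have to coincide with $y_0$, which is impossible, not that ``$w\neq y_0$ is impossible''.
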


\begin{proof}
Let $T=(t,\{w,y_0\})$ be a new triple in $H$. Then $N_G(y_0) \triangle N_G(w)=\{x_0,t\}$.
Suppose $w$ is the other endpoint of $C'_0$. If $C_0$ is a Bi-sequence, then by Lemma~\ref{cprime_is_c_minus_x}, $w$ is the other endpoint of $C_0$; thus $y_0$ and $w$ are either both adjacent to $x_0$, or both non-adjacent to $x_0$. If $C_0$ is a belt, then $y_0$ and $w$ are the successors of $x_0$ in opposite directions. Therefore both $y_0$ and $w$ are non-adjacent to $x_0$. In any case $x_0 \notin N_G(y_0) \triangle N_G(w)$, which is a contradiction.
\end{proof}

\begin{lem} \label{t_nucleus}
Let $T=(t,\{w,y_0\})$ be the new triple of $H$ introduced in Lemma~\ref{merge} part~\ref{seq_on_y}. Then $t$ is a nucleus vertex of $G$.
\end{lem}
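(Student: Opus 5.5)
We argue by contradiction: suppose $t$ is a non-nucleus vertex of $G$. The approach is to exploit that $T=(t,\{w,y_0\})$ is new in $H$, so $N_G(y_0)\triangle N_G(w)=\{x_0,t\}$, exactly as in the proof of Lemma~\ref{no_other_end_of_c}. By Remark~\ref{other_adj} and Corollary~\ref{no_intersection_in_obs}, $t$ lies in a unique triple component $D$ of $G$, where it is a middle vertex. Moreover $t$ is almost false twin with a nucleus endpoint $t_0$ of $D$. By Lemma~\ref{merge}, $t$ is not in $C'_0$. By Lemma~\ref{cprime_is_c_minus_x} and the fact that $t\neq x_0$ (as $t\in H$), $t\notin C_0$, so $D\neq C_0$.

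The key step is to locate $y_0$ and $w$ relative to $D$. Since $t$ distinguishes $y_0$ and $w$ but $t_0$ has the same neighbours as $t$ outside $D$, we split into cases.

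\textbf{Case 1: neither $y_0$ nor $w$ lies in $D$.} Then $t_0$ also distinguishes them. If $t_0\neq x_0$, this gives a third element $t_0$ of $N_G(y_0)\triangle N_G(w)$, a contradiction. If $t_0=x_0$, then $D$ is a triple component on $x_0$ different from $C_0$.

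\textbf{Case 2: one of them lies in $D$.} It must then be a nucleus vertex of $G$, hence an endpoint of $D$. The vertex $y_0$ is a non-nucleus vertex of $G$ lying in $C_0\neq D$, so by Corollary~\ref{no_intersection_in_obs} we get $y_0\notin D$. Therefore $w$ is an endpoint of $D$. Take $t$'s neighbour structure in $D$: $t$ is adjacent to $w$ exactly according to the A/N/belt pattern. Compare $t$ with its twin $t_0$, and with the successor $s$ of $t$ in $D$, which forms a triple with $t$. I would use the triple of $D$ having $t$ as middle vertex, $(t,\{p,q\})$, to get $N(p)\triangle N(q)=\{t\}$. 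Combined with $N(y_0)\triangle N(w)=\{x_0,t\}$, this should yield that $w$ and $y_0$ relate to $p$ or $q$ in a way producing another triple. That triple would merge $C_0$ or $D$ components, contradicting Corollary~\ref{no_intersection_in_obs} or the shapes in Theorem~\ref{triple_comps}.

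The main obstacle is the sub-case $t_0=x_0$, i.e.\ $D$ is another triple component on $x_0$. Here the plan is a degree/counting argument using Observation~\ref{deg_seq}. Since $y_0$ is almost false twin with $x_0$'s partner, $\deg_G(y_0)=k\pm 1$. The symmetric difference $\{x_0,t\}$ then forces $\deg_G(w)\in\{k,k\pm 2\}$. On the other hand, $t$ adjacent to $x_0$'s twin pattern forces specific adjacency between $x_0$ and $w$, which should be inconsistent with $w$ being outside both $C_0$ and $D$ (outside both, $w$ sees $x_0$ and $t$ alike). Concretely, $w\notin D\cup C_0$ means $w$ treats $t$ and $x_0$ the same, and $y_0$ does too. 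So $N(y_0)\triangle N(w)$ contains both or neither of $x_0$ and $t$. The first option is consistent with the count, so we need the additional fact that $y_0$ itself does not treat $t$ and $x_0$ alike. However, $y_0\in C_0$ and $t\in D$ are in different components, so $y_0$'s adjacency to $t$ equals $y_0$'s adjacency to $x_0$. It also equals $w$'s adjacency to $x_0$, giving $x_0\notin N(y_0)\triangle N(w)$, a contradiction. I expect this bookkeeping, which checks adjacencies of $y_0$ to $t$ versus to $x_0$ via Remark~\ref{other_adj} when $y_0\in C_0$ and $x_0$ is an endpoint of $C_0$, to be where the care is needed.
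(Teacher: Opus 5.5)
\textbf{There is a genuine gap: two of your cases are not actually proved.}

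What is correct: the identity $N_G(y_0)\triangle N_G(w)=\{x_0,t\}$, and Case~1 when $t_0\neq x_0$, where the nucleus twin $t_0$ is a third element of the symmetric difference.

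\emph{Case~2.} You claim that whichever of $y_0,w$ lies in $D$ ``must be a nucleus vertex of $G$''. That is justified only for $y_0$, via Corollary~\ref{no_intersection_in_obs}. Nothing you say prevents $w$ from being a middle vertex of $D$. Even when $w$ is an endpoint, the argument stops at ``this should yield \dots\ another triple''.

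\emph{Sub-case $t_0=x_0$.} Your last step asserts that $y_0$'s adjacency to $x_0$ equals $w$'s adjacency to $x_0$. No reason is given, and this is exactly the negation of $x_0\in N_G(y_0)\triangle N_G(w)$, which you have already established. So the contradiction is assumed rather than derived. What your observations really give is that $x_0$ and $t$ lie on the same side of the symmetric difference, i.e.\ $|\deg_G(y_0)-\deg_G(w)|=2$. That contradicts nothing unless you know what kind of vertex $w$ is in $G$. (Also, $\deg_G(y_0)=k\pm1$ is false in general: by Observation~\ref{deg_seq} it equals $k-(r-1)$ when $C_0$ is an A-sequence on $2r$ vertices.)

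\emph{The missing idea is to control $w$, which is what the paper does.} The argument runs as follows.
\begin{enumerate}
\item $t$ is the middle vertex of $T$, so it is non-nucleus in $H$. Hence it cannot be a successor of $x_0$ in $D$, since such successors become nucleus in $H$ by Lemma~\ref{nuc_y}.
\item Therefore the triple of $D$ with middle vertex $t$ avoids $x_0$ and survives in $H$. Together with the new triple $T$, this makes $t$ the middle vertex of two distinct triples of $H$, lying in different triple components.
\item Lemma~\ref{no_c_intersection} then forces the component of $T$ to be the small sequence $(y_0,t,w)$, so $w\in H^\circ$.
\item By Proposition~\ref{H_nucleuses}, $w$ is either a nucleus vertex of $G$, with $\deg_G(w)=k$, or a successor of $x_0$ in some triple component. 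In the latter case $\deg_G(w)<k$ if $w$ is adjacent to $x_0$ (A-sequence), and $\deg_G(w)>k$ otherwise.
\item Exactly one of $y_0,w$ is adjacent to $x_0$. If $y_0$ is, then $C_0$ is an A-sequence and $\deg_G(y_0)<k$. The symmetric difference then gives $\deg_G(w)\le\deg_G(y_0)<k$. But $w$ is not adjacent to $x_0$, so step~4 forces $\deg_G(w)\ge k$, a contradiction. The other case is symmetric.
\end{enumerate}
This single degree count covers all your cases at once, so the split by the position of $y_0$ and $w$ relative to $D$ is unnecessary.
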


\begin{proof}
If $t$ is non-nucleus in $G$, then it is the middle vertex of some triple in a triple component $D$ of $G$.
The vertex $t$ is not a successor of $x_0$ in $D$ otherwise it will be a nucleus vertex of $H$ by Lemma~\ref{nuc_y}. Hence $D$ cannot be a small belt on $x_0$. The triples of $D$ that do not contain $x_0$ are also triples in $H$. Suppose $D'$ is the triple component of $H$ containing $D-x_0$ ($x_0$ may be outside of $D$). Since $t$ is not a successor of $x_0$ in $D$, it is the middle vertex of a triple in $D'$. Thus $t$ is the middle vertex of two triples in $H$; one is in $D'$ and the other is $T$. Let $F'$ be the triple component of $H$ that contains $T$. Therefore, by Lemma~\ref{no_c_intersection}, $D'$ and $F'$ are small sequences. Consequently, $F'$ is the small sequence $(y_0,t,w)$, $w$ is a nucleus vertex in $H$ and $N_G(w) \triangle N_G(y_0)=\{x_0,t\}$.

Now suppose that $y_0$ is adjacent to $x_0$, then $C_0$ is an A-sequence and $\deg_G(y_0)<k$ (Observation~\ref{deg_seq}). Moreover, $w$ is non-adjacent to $x_0$. If $y_0$ is adjacent to $t$ we have $\deg_G(w) < \deg_G(y_0)$. If $y_0$ is non-adjacent to $t$ we have $\deg_G(w)=\deg_G(y_0)$.
If $w$ is a nucleus vertex of $G$, then $\deg_G(w)=k$. If $w$ is a non-nucleus vertex of $G$, then it is a successor of $x_0$ in some triple component of $G$ by Proposition~\ref{H_nucleuses}. Also $w$ and $x_0$ are non-adjacent. It follows that the triple component of $G$ containing $w$ is an N-sequence or a belt, thus $\deg_G(w) > k$. In either case we see a contradiction.

Finally suppose that $y_0$ is non-adjacent to $x_0$. Then $C_0$ is an N-sequence or a belt and $\deg_G(y_0) > k$. Moreover $w$ is adjacent to $x_0$. A similar argument results in $\deg_G(w) \geq \deg_G(y_0)$. Again if $w$ is a nucleus vertex of $G$, we have $\deg_G(w)=k$. If $w$ is a non-nucleus vertex of $G$, then it is a successor of $x_0$ in some triple component of $G$ by Proposition~\ref{H_nucleuses}. Also $w$ and $x_0$ are adjacent. It follows that the triple component of $G$ containing $w$ is an A-sequence, thus $\deg_G(w) < k$. In either case we see a contradiction.
\end{proof}

Let $G$ be a point determining graph and $x$ be a nucleus vertex of $G$. Suppose $F=(a_1,a_2,\ldots,a_{s-1},a_s)$ is an odd sequence or a Bi-sequence, that is not on $x$, in $G$. If there is a vertex $b$ such that $(a_s,\{a_{s-1},b\})$ is a triple in $G-x$, we say that $b$ is appended to $F$.

\begin{lem} \label{new_on_y}
 Suppose $F'$ is the triple component introduced in the proof of Lemma~\ref{t_nucleus}. Then there exists a Bi-sequence $F$ in $G$ such that
 \begin{enumerate}
 \item $x_0$ is not an endpoint of $F$.
 \item $y_0 \notin F$.
 \item $F \cup \{y_0\} \subseteq F'$.
 \end{enumerate}
\end{lem}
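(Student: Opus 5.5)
The plan is to build $F$ from the part of $F'$ that lies away from $y_0$. By Lemma~\ref{nuc_y}, $y_0$ is a nucleus vertex of $H$, so it is an endpoint of $F'$. The triple $T=(t,\{w,y_0\})$ is therefore the first triple of $F'$, and $F'$ reads $(y_0,t,w,\dots)$.

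\textbf{Step 1: $F'$ is not the small sequence $(y_0,t,w)$.} Suppose it were. Then $w$ would be a nucleus vertex of $H$ with $N_G(w)\triangle N_G(y_0)=\{x_0,t\}$. The degree argument in the second half of the proof of Lemma~\ref{t_nucleus} never used that $t$ is non-nucleus. It uses only Observation~\ref{deg_seq}, Corollary~\ref{deg_in_obs} and Proposition~\ref{H_nucleuses}, applied to $w$ according to whether $y_0$ is adjacent to $x_0$. So the same contradiction arises, and $F'$ has order at least $4$.

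\textbf{Step 2: the remaining triples of $F'$ are old triples of $G$.} Every triple of $F'$ other than $T$ avoids $y_0$, since $y_0$ is an endpoint. By Lemma~\ref{merge} and Lemma~\ref{no_other_end_of_c}, a new triple of $H$ outside $C'_0$ either contains $y_0$ or has the form $(z_0,\{t',w'\})$ with $C'_0$ a small sequence. I will rule out the second form inside $F'$. In that case $z_0$ would be a middle vertex of both $C'_0$ and $F'$, and $y_0$, which lies in $C'_0$, lies in $F'$ too. Then $C'_0$ and $F'$ would be distinct components sharing $y_0$ and $z_0$, so two of their triples share two vertices, which is impossible. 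Hence every triple of $F'$ except $T$ is a triple of $G$. These triples are pairwise $R$-related in a chain, so they lie in one triple component $F$ of $G$, and $F\supseteq F'-\{y_0\}$.

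\textbf{Step 3: $F$ is a Bi-sequence with the required properties.} By Lemma~\ref{no_odd}, $F$ is not type-two. $F$ is not a belt either: by Lemma~\ref{t_nucleus}, $t$ is a nucleus vertex of $G$, so it is an endpoint of $F$. The sequence order of $F'$ then forces $F$ to run as $(t,w,\dots)$, which is linear rather than cyclic. Hence $F$ is a Bi-sequence, and property 3 holds.

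For property 2, suppose $y_0\in F$. Then $y_0$, a non-nucleus vertex of $G$ in $C_0$, would lie in two triple components of $G$, contradicting Corollary~\ref{no_intersection_in_obs}. Equality $y_0=t$ is also impossible, since $t$ is nucleus in $G$ while $y_0$ is not.

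For property 1, suppose $x_0$ is an endpoint of $F$. Its successor in $F$ becomes a nucleus vertex of $H$ by Proposition~\ref{H_nucleuses}. In addition, by Lemma~\ref{cprime_is_c_minus_x} applied to $F$ in place of $C_0$, the triples of $F-x_0$ keep the same vertex set. This sits inside $F'$ only if $F'$ terminates before reaching $x_0$, so some triple of $F$ containing $x_0$ is not accounted for in $F'$. Alternatively, the same identification would make $F$ a component on $x_0$ of the form $C_0$ with $y$-vertex $t$; but $t$ is nucleus in $G$, not a successor of $x_0$ in $G$.

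\textbf{Main obstacle.} Property 1 is where I expect the difficulty. The intended fix is this: if $x_0$ is an endpoint of $F$, then $F$ plays the role of $C_0$. Lemma~\ref{nuc_y} then says its successor of $x_0$ is nucleus in $H$, hence an endpoint of $F'$. But $F'$ already has endpoint $y_0$ at the $t$-end, and its other end is reached through old triples of $F$ that avoid $x_0$. This forces the successor of $x_0$ to coincide with the far end of $F'$. Comparing degrees via Observation~\ref{deg_seq} at the two ends should then produce a contradiction.
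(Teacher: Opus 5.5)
You take a genuinely different route from the paper (you rebuild $F$ from the part of $F'$ away from $y_0$), and it has gaps. Your Step 1 is sound: the degree computation in the proof of Lemma~\ref{t_nucleus} indeed never uses that $t$ is non-nucleus. But property 1 is not proved, and your sketches do not close it. That the triples of $F$ through $x_0$ are ``not accounted for in $F'$'' is no contradiction, since those triples are destroyed in $H$ anyway. And if $x_0$ and $t$ were the two endpoints of $F$, the successor of $x_0$ would be a middle vertex of $F$, not $t$.

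The gap propagates. Step 2 only gives $F'\setminus\{y_0\}\subseteq F$, while property 3 asks for $F\subseteq F'$, which requires $x_0\notin F$. Your argument for property 2 also silently assumes $F\neq C_0$. No degree comparison is needed to fix this:
\begin{enumerate}
\item The old triples of $F'$ avoid $x_0$, so $F'$ is exactly the triple component of $H$ containing the triples of $F$ that avoid $x_0$.
\item If $F=C_0$, this says $F'=C'_0$, contradicting $T\notin C'_0$ (Lemma~\ref{merge} part~\ref{seq_on_y}). So $F\neq C_0$, and Corollary~\ref{no_intersection_in_obs} gives $y_0\notin F$.
\item If $x_0$ were an endpoint of $F$, Lemma~\ref{cprime_is_c_minus_x} with $F$ in the role of $C_0$ would force $y_0\in V(F')=V(F)\setminus\{x_0\}$, a contradiction.
\item With $x_0\notin F$, every triple of $F$ is a triple of $H$ that is $R'$-related to the old triples of $F'$, so $F\subseteq F'$.
\end{enumerate}

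Step 2 has a further hole. The claim ``every triple of $F'$ other than $T$ avoids $y_0$'' is false when $F'$ is a belt on $y_0$, because a belt's endpoint lies in two triples. Step 1 rules out only the small sequence $(y_0,t,w)$, not the small belt $(y_0,t,w,y_0)$, where no old triple is left to generate $F$ at all. The repair: the second triple through $y_0$ is again a new triple as in Lemma~\ref{merge} part~\ref{seq_on_y}, so its middle vertex is nucleus in $G$ by Lemma~\ref{t_nucleus}. In the small belt that middle vertex is $w$, and your Step 1 computation then contradicts $\deg_G(w)=k$. For a larger belt, Step 2 goes through using only the triples that avoid $y_0$.

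Two justifications should also be replaced. First, the exclusion of $(z_0,\{t',w'\})$ from $F'$ does not follow from the two components sharing $y_0$ and $z_0$, since distinct components can share two vertices without two of their triples doing so. It follows from Lemma~\ref{no_c_intersection}: $z_0$ is non-nucleus in $H$ and would lie in both $C'_0$ and $F'$, forcing $F'$ to be a small sequence. Second, ``linear rather than cyclic'' does not exclude a belt $F$ on $t$, which also runs $(t,w,\dots)$. The real reason is that all of its triples would survive into $F'$, putting $t$ in three triples of $F'$ although $t$ sits next to the endpoint $y_0$ there.

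The paper avoids all of this bookkeeping by going the other way. It starts from the component $F$ of $t$ in $G$ and shows $w$ is the successor of $t$ in $F$, so the triple of $F$ at $t$ shares $\{t,w\}$ with $T$. Once Lemma~\ref{cprime_is_c_minus_x} excludes $x_0$, $F\cup\{y_0\}\subseteq F'$ follows without ever analysing the shape of $F'$.
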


\begin{proof}
Let $T=(t,\{w,y_0\})$ be the new triple of $H$ introduced in Lemma~\ref{merge} part~\ref{seq_on_y}. By Lemma~\ref{t_nucleus}, $t$ is a nucleus vertex in $G$. By~\cite[page 1643]{feder2008realizations}, since $G$ contains triples (at least the triple $(y_0,\{z_0,x_0\})$), every nucleus vertex of $G$ is in some triple component. So, $t$ is contained in some triple component $F$ of $G$. We will show that $F$ has the desired properties (1), (2) and (3).

First we show that $w$ is a successor of $t$ in $F$. We know that $t$ is non-nucleus in $H$. Suppose $w$ is not a successor of $t$ in $F$. The triple $T$ and $F$ are not contained in the same triple component of $H$.
Therefore, $t$ is the intersection of two triple components. So, by Lemma~\ref{no_c_intersection}, $t$ is the middle vertex of two small sequences. If $F$ is a belt, removing $x_0$ does not alter $F$, because there is no way a belt can be extended to form another triple component. Thus $t$ is a nucleus vertex in $H$, which is a contradiction.
If $F$ is a Bi-sequence, then $t$ is at least the third vertex in the corresponding sequence of $F$ (even if $x_0$ is the other endpoint of $F$). Hence $t$ is not the middle vertex of a small sequence. So, $w$ is a successor of $t$ in $F$.

So, $y_0$ is appended to $F$ and we have a larger triple component than $F$ in $H$. Moreover, $F$ must be a Bi-sequence because, as we said, belts of $G$ could not extend in $H$.
By Lemma~\ref{cprime_is_c_minus_x}, $x_0$ is not an endpoint of $F$. By Lemma~\ref{nuc_y} $y_0$ is a nucleus vertex of $H$ and, therefore, an endpoint of $F'$ (Possibly $F'$ could be extended from the other endpoint of $F$).
\end{proof}

Now we prove that $G[G^\circ]$ is vertex-transitive.


\begin{thm} \label{the_big_thm}
If $G$ is an obs* graph, then $K=G[G^\circ]$ is a vertex-transitive graph.
\end{thm}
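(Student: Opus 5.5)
The strategy is to turn the isomorphisms $G-a \cong G-b$, for nucleus vertices $a,b$, into automorphisms of $K=G[G^\circ]$ carrying $a$ to $b$. Fix $a,b\in G^\circ$ and an isomorphism $\varphi: G-a \to G-b$. By Corollary~\ref{deg_in_obs}, nucleus vertices are exactly the vertices of degree $k$ in $G$. The first goal is to show that $\varphi$ maps $G^\circ\setminus\{a\}$ into $G^\circ\cup\{\text{successors of } b\}$, with a controlled exceptional set. Then we extend the restriction of $\varphi$ to a bijection $K\to K$ sending $a\mapsto b$, and check that it preserves adjacency.

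The first step is to compare the nucleus sets. We write $H_a=G-a$ and study $H_a^\circ$ using Proposition~\ref{H_nucleuses}. A vertex of $H_a^\circ$ is either in $G^\circ\setminus\{a\}$ or a successor $y_0$ of $a$ in a triple component $C_0$ on $a$. By Lemmas~\ref{nuc_y} and~\ref{cprime_is_c_minus_x}, each such $y_0$ does become a nucleus vertex of $H_a$.

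Conversely, we need that every vertex of $G^\circ\setminus\{a\}$ stays nucleus in $H_a$. This fails exactly when some new triple of $H_a$ has a $G$-nucleus vertex $t$ as its middle. By Lemmas~\ref{merge}, \ref{no_other_end_of_c}, \ref{t_nucleus} and \ref{new_on_y}, this happens only when $y_0$ is appended to a Bi-sequence $F$ ending at $t$. So $H_a^\circ=(G^\circ\setminus(\{a\}\cup E_a))\cup S_a$, where $S_a$ is the set of successors of $a$ and $E_a$ is the set of $G$-nucleus endpoints $t$ absorbed as middle vertices.

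Next we use degrees. In $H_a$, a vertex of $G^\circ$ not adjacent to $a$ keeps degree $k$, and a neighbour of $a$ drops to $k-1$. A successor $y_0$ has $G$-degree $\neq k$ by Observation~\ref{deg_seq}, shifted by one according to its adjacency with $a$. Tracking these degrees for Bi-sequences and belts shows that in $H_a$ the successors of $a$ have degrees that can be matched with the corresponding data in $H_b$. The key claim is that the multiset of triple components on $a$ (their types and lengths) coincides with that on $b$. We prove this by comparing, in $H_a$ and $H_b$, the structure of triple components of $H_a\cong H_b$: from the decomposition of $H_a$ into $K$-part plus sequences and belts, one can read off $G$ together with $a$, since $G$ is the realization.

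With this claim in hand, choose $\varphi$ and follow it back. The isomorphism carries the nucleus structure of $H_a$ to that of $H_b$, including the triple components and their endpoints. Composing with the reconstruction, where the removed vertex is recovered as the unique vertex that extends the shortened components, yields an automorphism $\psi$ of $G$ with $\psi(a)=b$. Concretely, we define $\psi(a)=b$ and $\psi|_{G-a}=\varphi$. We then verify that $N_G(a)$ maps onto $N_G(b)$ using Remark~\ref{other_adj}: adjacency with $a$ is determined by adjacency with the almost-false-twin representatives, and the triple $(y_0,\{z_0,a\})$ must go to $(\varphi(y_0),\{\varphi(z_0),b\})$.

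Any automorphism of $G$ preserves $G^\circ$, so it restricts to an automorphism of $K$ mapping $a$ to $b$. Since $a,b$ were arbitrary, $K$ is vertex-transitive.

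The main obstacle is showing that $\varphi$ (possibly after modification) sends $N_G(a)$ to $N_G(b)$, that is, that the extension $\psi$ is really an automorphism. The difficulty is that $\varphi$ might send a successor $y_0$ of $a$ to a genuine $G$-nucleus vertex, or vice versa, because of the sets $E_a,S_a$. I would handle this with degree counting in $G$: count vertices of degree $k$ in $H_a$ versus $H_b$ and show $|E_a|=|E_b|$ and $|S_a|=|S_b|$. If needed, I would compose $\varphi$ with the automorphisms of $H_b$ that reverse Bi-sequences or rotate belts so that the successors correspond. Should a global automorphism be too much to obtain, the fallback is to show directly that $\varphi$ restricted to $G^\circ\setminus(\{a\}\cup E_a)$, extended by $a\mapsto b$ and matched on $E_a$, is an isomorphism of $K$, using Theorem~\ref{K_pd} (point determining) to rule out ambiguity.
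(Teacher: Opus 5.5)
There is a genuine gap, located exactly where you place the main obstacle, and the tools you propose do not close it.

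\textbf{The gap.} Your ``key claim'' that the triple components on $a$ and on $b$ agree in type and length is only asserted. Saying that one can read off $G$ together with $a$ from $H_a$ is essentially the theorem itself, because the components of $H_a$ are not those of $G$: they can close up into belts or absorb successors of $a$ by appending.

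The difficulty you name is also not the real one, since degrees already settle it. The successors of $a$ are exactly the vertices of $H_a^\circ$ whose $H_a$-degree is not $k$ or $k-1$. So every isomorphism $\varphi:G-a\to G-b$ sends successors of $a$ to successors of $b$. It also maps $G^\circ\setminus(\{a\}\cup E_a)$ onto $G^\circ\setminus(\{b\}\cup E_b)$, preserving adjacency to $a$, resp. $b$.

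The real problem is the step ``the triple $(y_0,\{z_0,a\})$ must go to $(\varphi(y_0),\{\varphi(z_0),b\})$''. The vertex $b'=\varphi(y_0)$ can be an endpoint of several components of $G-b$. One is the shortened component of some $D$ on $b$. Others arise by appending $b'$ to a Bi-sequence $F$ of $G$ not on $b$ (part 3(a) of Lemma~\ref{merge}, Lemma~\ref{new_on_y}). Nothing in your argument prevents $\varphi$ from sending $C_0'$ onto one of the latter. Then $\varphi(z_0)$ lies in $F$, $(\varphi(y_0),\{\varphi(z_0),b\})$ is not a triple of $G$, and $\psi$ cannot be an automorphism.

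Even when the component is the right one, the orientation can be wrong. Take $G=P_4$ with path $p\,q\,r\,s$, i.e.\ the A-sequence $(q,p,s,r)$ on the edge $qr$ of $K=K_2$. The isomorphism $G-q\to G-r$ given by $p\mapsto s$, $r\mapsto p$, $s\mapsto q$ sends $N_G(q)=\{p,r\}$ to $\{s,p\}\neq N_G(r)$.

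Your proposed repairs do not help either:
\begin{itemize}
\item Counting $|E_a|=|E_b|$ and $|S_a|=|S_b|$ gives no vertex-level correspondence.
\item Your fallback stops precisely at ``matched on $E_a$''.
\item Your description of $E_a$ misses the belt-closing case (part 2 of Lemma~\ref{merge}), where the other endpoint of an A-sequence on $a$ becomes a middle vertex of $H_a$.
\item Reversing a Bi-sequence or rotating a belt is in general not an automorphism of $H_b$.
\end{itemize}

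\textbf{The missing idea} is to break the symmetry between the two nucleus vertices. The paper does not compare arbitrary $a,b$. It fixes one well-chosen $x$: an endpoint of a triple component $S$ of minimum order $m$ among the non-small ones, with triple $(y,\{x,z\})$. It then proves that every $a\in G^\circ$ is the image of $x$ under an automorphism of $G$.

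For any isomorphism $\varphi:G-x\to G-a$, the degree argument makes $b=\varphi(y)$ a successor of $a$. Minimality forces $\varphi(S')$, of order $m-1$, to come from a component $D$ on $a$. This is because a component obtained by appending $b$ to a Bi-sequence $F$ has order at least $|F|+1\ge m+1$. Then $c=\varphi(z)$ completes the triple $(b,\{a,c\})$ of $D$, after composing with the reflection of the belt $D'$ about its endpoint in the one subcase where an A-sequence closes into a belt.

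Since $N_G(x)$ is determined by $N_G(z)$ and $y$, and $N_G(a)$ in the same way by $N_G(c)$ and $b$, $\varphi$ extends by $x\mapsto a$. Only this single triple is needed, not your multiset claim. The case where all components are small belts is treated separately. Transitivity on $G^\circ$, hence on $K$, then follows by composing $\varphi_{a'}\circ\varphi_a^{-1}$.
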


\begin{proof}
Let $G$ be an obs* graph. If $G$ has no non-nucleus vertex, the result is obvious by Proposition~\ref{regular_v_tran}. So, suppose that $G$ has some non-nucleus vertex, and therefore $G$ contains triple components. Let $S$ be a triple component of the shortest order $m$ among the non-small triple components ($m>3$). The case that there is no such triple component (whenever all triple components are small) is explained at the end. Suppose $x$ is an endpoint of $S$ and $T=(y,\{x,z\})$ is a triple of $S$.

Set $H=G-x$. First we look at the vertex degrees in $H$. Let $k$ be the degree of any vertex of $G^\circ$ in $G$. If $u$ is a nucleus vertex in $G$, then $\deg_G(u)=k$. So, $\deg_H(u)=k$ if $u$ is not adjacent to $x$; and $\deg_H(u)=k-1$ if $u$ is a neighbor of $x$.

By Proposition~\ref{H_nucleuses}, after removing $x$ no vertex of $G$ becomes a nucleus vertex of $H$ except those that are a successor of $x$ in the corresponding sequence of some triple component $L$; suppose $y'$ is such a vertex. Then, if $L$ is an A-sequence, then $\deg_G(y') \leq k-1$ and $y'$ is adjacent to $x$. Therefore $\deg_H(y') \leq k-2$. If $L$ is either an N-sequence or a belt, then $\deg_G(y') \geq k+1$ and $y'$ is not adjacent to $x$. Thus $\deg_H(y') \geq k+1$.

Let $S'$ be the triple component of $G-x$ containing $S-x$. By Lemmas~\ref{cprime_is_c_minus_x} and ~\ref{merge} the vertex set of $S'$ and $S-x$ are the same. As a triple component, $S'$ is either just $S-x$ or a belt containing $S-x$ together with an additional triple.
The order of $S'$ is $m-1$. Moreover if both $S$ and $S'$ are not belts, the other nucleus vertex of $S'$ is a nucleus vertex of $G$ and has degree $k$ or $k-1$ in $H$. If $S$ is not a belt and $S'$ is a belt, $S'$ has one nucleus vertex. If $S$ is a belt, then $S'$ is not a belt and the two nucleus vertices of $S'$ have the same degree greater than $k$.


Now let $a$ be a nucleus vertex of $G$. By Theorem~\ref{obs_is_realization} there exists an isomorphism $\varphi:G-x \to G-a$. Let $D'=\varphi(S')$ and $b=\varphi(y)$.
We know that $y$, as an endpoint of $S'$, has degree not equal to $k$ or $k-1$ in $H$. Thus $\deg_{G-a}(b) \neq k, k-1$.
Hence $b$ is a successor of $a$ in some triple component of $G$. In the sequel we show that this component is in fact isomorphic to $S$ and $\varphi:G-x \to G-a$ can be extended to an automorphism of $G$ with $\varphi(x)=a$.

Since $b \in (G-a)^\circ - G^\circ$, we have two types of triple components on $b$ in $H$. Consequently $D'$ is either a triple component of $G-a$ containing $D-a$ where $D$ is a triple component of $G$ on $a$, which is uniquely determined by $D'$. Or $D'$ is a triple component that is obtained from appending $b$ to some Bi-sequence $F$ of $G$, that is not on $a$, by Lemma~\ref{new_on_y}. But this latter triple component has order at least $m+1$, which is impossible by the choice of $D'$, because $D'=\varphi(S')$ and $|\varphi(S')|=|S'|=|S|-1<m$. So, the former case occurs and $b$ is a successor of $a$ in $D$. Let $\varphi(z)=c$.

We want to show that there exists an automorphism of $G$ that maps $x$ to $a$ (this desired automorphism may not be an extension of $\varphi$). The establishment of this automorphism depends on the type of $S$.

\begin{enumerate}

\item If $S$ is an N-sequence, then $S'$ is just $S-x$, and therefore an odd sequence in $H$. So, one of its endpoints has degree $\deg_H(y)$ and the other endpoint has degree $k$. Since $S'\cong D'$, $D$ is an N-sequence on $a$. It follows that the choices of $b$ and $c$ are unique. Thus, $x$ is actually the vertex that is almost false twin to $z$; more percisely $N_G(x)=N_G(z)-\{y\}$. Similarly, $N_G(a)=N_G(c)-\{b\}$. Therefore, since $\varphi(z)=c$, $\varphi$ can be extended to an automorphism $\tilde{\varphi}$ of $G$ that maps $x$ to $a$.

\item If $S$ is an A-sequence, $S'$ is either $S-x$ or a belt, as stated in Lemma~\ref{merge}. If $S'=S-x$, then again as in (1), the choices of $b$ and $c$ are unique; and we have $N_G(z)=N_G(x)-\{y\}$ and $N_G(c)=N_G(a)-\{b\}$. Therefore, since $\varphi(z)=c$, $\varphi$ can be extended to an automorphism $\tilde{\varphi}$ of $G$ that maps $x$ to $a$.

If $S'$ is a belt, then the choice of $b$ is unique, but the choice of $c$ is not unique. There are actually two options $c'$ and $c''$ for $c$; $c'$ is the vertex that makes the triple $(b,\{a,c'\})$ in $G$, $c''$ is the other endpoint of $D$. Since $N_G(z)=N_G(x)-\{y\}$ and $S' \cong D'$ is a belt, $D$ must be an A-sequence on $a$, because other triple components become a Bi-sequence or an odd sequence after removing one endpoint by Lemma~\ref{merge}. If $\varphi$ maps $z$ to $c'$, we can extend $\varphi$ as when $S'$ is not a belt. If $\varphi$ maps $z$ to $c''$, then we first define the following isomorphism $\psi:G-x \to G-a$ and extend it to an automorphism $\tilde{\psi}:G \to G$ that maps $x$ to $a$.

To get $\psi$, first we introduce an automorphism $f:G-a \to G-a$ that only moves the vertices of $D'$ such that maps $c''$ to $c'$. If $D'=(u_1,v_1,u_2,v_2, \ldots, u_r, v_r, u_{r+1},u_1)$, with $v_1=c'$ and $u_{r+1}=c''$, we define

\[
f(x)=
\begin{cases}
v_{r-i+2} & \text{if} \quad x=u_i,\\
u_i & \text{if} \quad x=v_{r-i+2},\\
x & \text{otherwise.}
\end{cases}
\]

In fact, $f$ is the reflection of the belt $D'$ with respect to its middle diameter $\left(u_{\lfloor\frac{r+1}{2}\rfloor},v_{\lfloor\frac{r}{2}\rfloor+1}\right)$. Then by the definition of a belt, $f$ is an automorphism of $G-a$. Now $\psi=f \circ \varphi:G-x \to G-a$ is an isomorphism which can be extended to the automorphism $\tilde{\psi}:G \to G$ by defining $\tilde{\psi}(x)=a$.

\item If $S$ is a belt, then $S'=S-x$ and since $S$ is not small, we can do the same as in (1). Note that in this case, there are actually two options $(b',c')$ and $(b'',c'')$ for the pair $(b,c)$, which make the triples $(b',\{a,c'\})$ and $(b'',\{a,c''\})$ in the opposite directions of $S$. But it does not matter which one we choose
because $S'$ is actually a triple component of $H$ with two endpoints of degree not equal to $k$ or $k-1$. The only way that we can have such a triple component ($D' \cong S'$) in $G-a$ is to have a belt $D$ on $a$. So, $D$ has to be a belt and we have $N_G(a)=N_G(c')-\{b'\}$ and also $N_G(a)=N_G(c'')-\{b''\}$.

\end{enumerate}

Now suppose that there is no triple component of order more than $3$. In this case all triple components are small belts. Let $S$ be a triple component on a nucleus vertex $x$ of $G$ (which is a small belt). Then we do not have any triple component $S'$ of $G-x$ that contains $S-x$ as we saw in Lemma~\ref{cprime_is_c_minus_x}. Because $S-x$ has two vertices and no vertex can be appended to it. But we have two nucleus vertices $y$ and $z$ of degree $k+1$ that are true twins in $G-x$. Let $a$ be any nucleus vertex of $G$. There exists an isomorphism $\varphi:G-x \to G-a$. Thus we have two nucleus vertices $\varphi(y)=b$ and $\varphi(z)=c$ of degree $k+1$ in $G-a$, therefore both lie on a small belt on $a$ in $G$, say $D=(a,b,c,a)$.
But then we have $N(x)=N(y)-\{z\}$ and $N(a)=N(b)-\{c\}$, and so we can, again, extend $\varphi$ to $\tilde{\varphi}$ on $G$ by mapping $x$ to $a$. In this case the choice of $b$ and $c$ is not unique, but it is not important which one we choose, because $b$ and $c$ are true twins and we also have $N(a)=N(b)-c$.

In both cases, $|S|>3$ and $|S|=3$ with $x$ an endpoint of $S$ we provided
an automorphism $\tilde{\varphi}:G \to G$ such that $\tilde{\varphi}(x)=a$, for every $a \in G^\circ$. Now for each two nucleus vertices $a$ and $a'$, there exist automorphisms $\varphi_a$ and $\varphi_{a'}$ of $G$ such that $\varphi_a(x)=a$ and $\varphi_{a'}(x)=a'$. Then $\varphi_{a'} \circ \varphi_a^{-1}$ is an automorphism of $G$ that maps $a$ to $a'$. Furthermore, every automorphism of $G$ maps $G^\circ$ to $G^\circ$. Therefore $G^\circ$ is vertex-transitive and the proof is complete.
\end{proof}

In fact, in Theorem~\ref{the_big_thm} we have proved the following corollary.

\begin{cor} \label{aut_G}
Let $G$ be an obs* graph and $K=G[G^\circ]$. For every two vertices $x,a \in K$ there exists $\psi \in \mathrm{Aut}(G)$ such that $\psi(x)=a$. Also the restriction of automorphisms of $G$ to $G^\circ$ makes a subgroup of $\mathrm{Aut}(K)$.
\end{cor}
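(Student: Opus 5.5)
The corollary has two parts, and the first has in effect already been shown in the proof of Theorem~\ref{the_big_thm}. There we fixed one nucleus vertex $x$, an endpoint of a suitably chosen triple component $S$: one of shortest order among the non-small components, or a small belt if all components are small. For every $a\in G^\circ$ we then built an automorphism $\tilde{\varphi}_a$ of $G$ with $\tilde{\varphi}_a(x)=a$. It was either an extension of an isomorphism $\varphi:G-x\to G-a$ from Theorem~\ref{obs_is_realization}, or, in the A-sequence case where $S'$ is a belt, an extension of $f\circ\varphi$ with $f$ the reflection of the belt $D'$. So for arbitrary $x',a\in G^\circ$ I would take $\psi=\tilde{\varphi}_a\circ\tilde{\varphi}_{x'}^{-1}$. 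This lies in $\mathrm{Aut}(G)$ and sends $x'$ to $a$. The only thing to check is that the construction of $\tilde{\varphi}_a$ used nothing about $a$ beyond $a\in G^\circ$, and that case analysis was indeed uniform in $a$.

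For the second part, I would first show that every $\sigma\in\mathrm{Aut}(G)$ satisfies $\sigma(G^\circ)=G^\circ$. Being a nucleus vertex is an isomorphism invariant: $G-v$ is point determining if and only if $G-\sigma(v)$ is, since $\sigma$ restricts to an isomorphism $G-v\to G-\sigma(v)$. Alternatively, Corollary~\ref{deg_in_obs} characterizes $G^\circ$ as exactly the set of vertices of degree $k$, and automorphisms preserve degree.

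It follows that $\sigma|_{G^\circ}$ is a bijection of $G^\circ$ that preserves adjacency and non-adjacency, hence an automorphism of $K=G[G^\circ]$. The restriction map $\rho:\mathrm{Aut}(G)\to\mathrm{Aut}(K)$, $\sigma\mapsto\sigma|_{G^\circ}$, respects composition and identities, so it is a group homomorphism. Its image $\rho(\mathrm{Aut}(G))$ is therefore a subgroup of $\mathrm{Aut}(K)$, which is the second claim. By the first part this subgroup acts transitively on $V(K)$, which recovers Theorem~\ref{the_big_thm}.

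The only real point of care is the first part: confirming that the automorphisms built in Theorem~\ref{the_big_thm} are genuine automorphisms of all of $G$, and not merely isomorphisms $G-x\to G-a$. This is exactly what the extension step there established, via the identities $N_G(x)=N_G(z)\mp\{y\}$ and $N_G(a)=N_G(c)\mp\{b\}$. I would simply cite that step. The rest is formal.
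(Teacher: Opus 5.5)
Your proposal is correct and follows the paper's route. The paper also reads the corollary off the proof of Theorem~\ref{the_big_thm}: it builds automorphisms $\varphi_a$ sending the fixed $x$ to each $a$, composes them as $\varphi_{a'}\circ\varphi_a^{-1}$, and notes that automorphisms of $G$ preserve $G^\circ$, so restriction gives a subgroup of $\mathrm{Aut}(K)$. The one case your outline skips is when $G$ has no non-nucleus vertices, since then there is no triple component $S$ to place $x$ on; there $G=K$ is regular and point determining, and Proposition~\ref{regular_v_tran} gives transitivity directly, exactly as the paper does.
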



\section{Characterization of obs* Graphs} \label{main_result}

In this section we completely characterize obs* graphs.

Let $K$ be a vertex transitive graph. A \textit{transitive subgroup} of $\mathrm{Aut}(K)$ is a subgroup $\Gamma \leq \mathrm{Aut}(K)$ such that for every $u,v \in K$ there exists $\gamma \in \Gamma$ that $\gamma(u)=v$. We say that two edges $xy$ and $uv$ belong to the same edge orbit of $G$ with respect to $\Gamma$, if there exists an element $\gamma \in \Gamma$ such that $\gamma(x)=u$ and $\gamma(y)=v$. We call two non-adjacent vertices of $G$ a non-edge of $G$. The non-edge orbits of $G$ with respect to $\Gamma$ are defined similarly (they are the edge orbits of the complement of $G$).

\begin{ex} \label{patric_orbits}
Let $K=C_6$ with vertices $u_1,\ldots,u_6$. The automorphism group of $C_6$ is (isomorphic to) the dihedral group $D_{12}$. Let $\mathrm{Aut}(C_6)=D_{12}=\{e,a,a^2,a^3,a^4,a^5,b,ab,a^2b,a^3b,a^4b,a^5b\}$ where $a$ denotes the $\frac{2\pi}{6}$ counterclockwise rotation and $b$ denotes the reflection $u_1 \longleftrightarrow u_6$, $u_2\longleftrightarrow u_5$ and $u_3 \longleftrightarrow u_4$. The subgroup $\Gamma=\{e,a^2,a^4,b,a^2b,a^4b\} \leq D_{12}$ is a transitive subgroup of $\mathrm{Aut}(C_6)$. The edge orbits of $C_6$ with respect to $\Gamma$ are

\[
\mathcal{O}_1=\{u_6u_1,u_2u_3,u_4u_5\},\mathcal{O}_2=\{u_1u_2,u_3u_4,u_5u_6\}.
\]

The non-edge orbits of $C_6$ with respect to $\Gamma$ are

\[
\mathcal{O}'_1=\{u_1u_4,u_2u_5,u_3u_6\},\mathcal{O}'_2=\{u_1u_5,u_5u_3,u_3u_1\},\mathcal{O}'_3=\{u_2u_4,u_4u_6,u_6u_2\}. 
\]

\end{ex}

Let $K$ be a graph and $x$ and $y$ be two vertices of $K$. Suppose $C$ is a belt disjoint from $K$. We say we put $C$ on the vertex $x$, if we identify the endpoint of $C$ with the vertex $x$ (this operation is also known as a vertex-sum in the literature). Let $\mathcal{A}$ be an A-sequence disjoint from $K$ and $xy$ be an edge of $K$. We say we put $\mathcal{A}$ on $xy$ if we identify the vertices $x$ and $y$ with the endpoints of $\mathcal{A}$ (this operation is also known as an edge-sum in the literature). Note that it is does not matter which of $x$ or $y$ are identified with which endpoint of $\mathcal{A}$, because an A-sequence is symmetric with respect to its endpoints. Similarly, if $xy$ is a non-edge of $K$ and $\mathcal{N}$ is an N-sequence disjoint from $K$, we define putting $\mathcal{N}$ on $xy$ if we identify the vertices $x$ and $y$ with the endpoints of $\mathcal{N}$. Again note that it is does not matter which of $x$ or $y$ are identified with which endpoint of $\mathcal{N}$, because an N-sequence is symmetric with respect to its endpoints. We mention that a Bi-sequence that is put on $K$ may be extended to a larger triple component in the new graph.

The following construction explains how an obs* graph is achieved.

Take a point determining vertex-transitive graph $K$. Choose any transitive subgroup $\Gamma \leq \mathrm{Aut}(K)$. Choose a set of belts (possibly empty) of arbitrary orders and put each of them on each vertex of $K$. Consider the orbits of edges of $K$ with respect to $\Gamma$. For each orbit of edges, choose a collection of A-sequences (possibly empty) of arbitrary orders and put each of them on each edge of this orbit. Consider the orbits of non-edges of $K$ with respect to $\Gamma$. For each orbit of non-edges, choose a collection of N-sequences (possibly empty) of arbitrary orders and put each of them on each non-edge of this orbit. Whenever we put the same A-sequence on two different edges of $K$, they are named mates. Mate N-sequences and mate belts are defined similarly.

Other adjacencies of this graph are defined with the same way as Remark~\ref{other_adj}.
We do not have any odd sequences in this graph, so every vertex is either itself a nucleus vertex or is almost false twin with a nucleus vertex. In a Bi-sequence $(u_1,v_1,u_2,v_2,\ldots,u_r,v_r)$, the vertex $u_i$, $i>1$, is almost false twin with $u_1$, and the vertex $v_j$, $j<r$ is almost false twin with $v_r$ (the endpoints $u_1$ and $v_r$ are nucleus vertices). In a belt $(u_1,v_1,\ldots,u_r,v_r,u_{r+1},u_1)$, every non-nucleus vertex $z \neq u_1$ is almost false twin with the endpoint $u_1$ (which is a nucleus vertex).
Let $u$ and $v$ be two vertices that are not in the same triple component of $G$. If $u$ is nucleus, set $u_0=u$ and if $u$ is non-nucleus, set $u_0$ to be the nucleus vertex that is almost false twin with $u$. We do the same for $v$ to get $v_0$. Now $u_0v_0$ is an edge, if and only if $uv$ is an edge.
Also note that any two Bi-sequence and/or belts can only intersect in their endpoints, by the definition.
We temporarily call this graph a good graph and denote it by $G(K,\Gamma)$.

We claim that a good graph $G(K,\Gamma)$ is an obs* graph and is, in fact, in $\mathrm{obs}^*(G-x)$ for each $x \in K$. Conversely, we show that every obs* graph is a good graph.

\begin{ex} \label{crab_example}
Consider the graph $K=2K_2$ with vertices $u_1,u_2,u_3,u_4$ and edges $u_1u_2,u_3u_4$, which is a point determining vertex-transitive graph. The group of isomorphisms of $2K_2$ is (isomorphic to) the dihedral group $D_8$. Let $\mathrm{Aut}(2K_2)=D_8=\{e,a,a^2,a^3,b,ab,a^2b,a^3b\}$ where $a$ denotes the automorphism $(u_1u_3u_2u_4)$ and $b$ denotes the automorphism $(u_1u_2)$. Then $\Gamma=\{e,a^2,ab,a^3b\}=\{1,(u_1u_2)(u_3u_4),(u_1u_3)(u_2u_4),(u_1u_4)(u_2u_3)\} \leq D_8$ is a transitive subgroup of $\mathrm{Aut}(2K_2)$. We have only one edge orbit of $2K_2$ with respect to $\Gamma$,

\[
\mathcal{O}_1=\{u_1u_2,u_3u_4\}.
\]

The non-edge orbits of $2K_2$ with respect to $\Gamma$ are

\[
\mathcal{O}'_1=\{u_2u_4,u_1u_3\},\mathcal{O}'_2=\{u_1u_4,u_2u_3\}. 
\]

We put two A-sequences of order $4$ and one A-sequence of order $6$ on each edge of $\mathcal{O}_1$. We put an N-sequence of order $4$ on each non-edge of $\mathcal{O}'_1$. We put a small belt on each vertex of $2K_2$. The resulted graph is a good graph, which we call it the Crab graph, Figure~\ref{crab}.


In this Figure, first we have drawn the graph $K$ with black vertices. The Bi-sequences are shown as handle paths and the belts as polygons. We only mention the adjacencies and non-adjacencies of the consecutive vertices that are in one Bi-sequence or belt, to avoid cluttering the figure. Note that every A-sequence starts and ends with edges and every N-sequence starts and ends with non-edges. Other adjacencies between different Bi-sequences and belts that we omit their drawings can be determined by the structure of triples in Bi-sequences and belts. For example, consider three vertices $x,y,z$ in the Crab graph. The vertex $x$ is almost false twin with $u_4$ and the vertex $y$ is almost false twin with $u_3$ and the vertex $z$ is also almost false twin with $u_3$. The vertices $u_3$ and $u_4$ are adjacent, thus $x$ and $y$ are adjacent. Moreover, $x$ and $z$ are adjacent. Since $u_3$ is not adjacent to itself (we do not have loops), $y$ and $z$ are non-adjacent.


\begin{figure}[h]

\centering

\begin{tikzpicture}[
    x=2cm,
    y=2cm,
    vertex/.style={draw,circle,fill=white,inner sep=0pt,minimum size=6pt,thick},
    edge/.style={draw,thick},
]

\begin{scope}[rotate=90]


\node[vertex,fill=black] (u1) at (-1.2, 1.2) {};
\node[vertex,fill=black] (u2) at ( 1.2, 1.2) {};
\node[vertex,fill=black] (u3) at ( 1.2,-1.2) {};
\node[vertex,fill=black] (u4) at (-1.2,-1.2) {};

\draw[edge] (u1)--(u2);
\draw[edge] (u3)--(u4);


\foreach \i [evaluate=\i as \nexti using {int(mod(\i,4)+1)},
             evaluate=\i as \previ using {int(mod(\i+2,4)+1)}] in {1,...,4} {

    \coordinate (dir\i) at ($ (u\previ) + (u\nexti) - (u\i) $);

    \coordinate (midA\i) at ($(u\i)!-0.16!(dir\i)$);

    \node[vertex] (p\i) at ($(midA\i)!0.8!90:(u\i)$) {};
    \node[vertex] (q\i) at ($(midA\i)!0.8!-90:(u\i)$) {};

    \coordinate (midB\i) at ($(u\i)!-0.32!(dir\i)$);

    \node[vertex] (r\i) at ($(midB\i)!0.4!90:(u\i)$) {};
    \node[vertex] (s\i) at ($(midB\i)!0.4!-90:(u\i)$) {};

    \draw[edel] (u\i)--(p\i);
    \draw[edge] (p\i)--(r\i);
    \draw[edel] (r\i)--(s\i);
    \draw[edge] (s\i)--(q\i);
    \draw[edel] (q\i)--(u\i);
}



\foreach \A/\B in {u1/u2,u3/u4} {

    \node[vertex] (a\A\B) at ($(\A)!0.30!(\B)!0.30!90:(\B)$) {};
    \node[vertex] (b\A\B) at ($(\A)!0.70!(\B)!0.75!90:(\B)$) {};

    \draw[edge] (\A)--(a\A\B);
    \draw[edel] (a\A\B)--(b\A\B);
    \draw[edge] (b\A\B)--(\B);

    \node[vertex] (c\A\B) at ($(\A)!0.35!(\B)!0.18!90:(\B)$) {};
    \node[vertex] (d\A\B) at ($(\A)!0.65!(\B)!0.32!90:(\B)$) {};

    \draw[edge] (\A)--(c\A\B);
    \draw[edel] (c\A\B)--(d\A\B);
    \draw[edge] (d\A\B)--(\B);

    \node[vertex] (e\A\B) at ($(\A)!0.25!(\B)!0.45!90:(\B)$) {};
    \node[vertex] (f\A\B) at ($(\A)!0.75!(\B)!1.4!90:(\B)$) {};
    \node[vertex] (g\A\B) at ($(\A)!0.5!(e\A\B)$) {};
    \node[vertex] (h\A\B) at ($(f\A\B)!0.5!(\B)$) {};

    \draw[edge] (\A)--(g\A\B);
    \draw[edel] (g\A\B)--(e\A\B);
    \draw[edge] (e\A\B)--(f\A\B);
    \draw[edel] (f\A\B)--(h\A\B);
    \draw[edge] (h\A\B)--(\B);

}


\foreach \A/\B in {u2/u3,u4/u1} {


    \node[vertex] (a\A\B) at ($(\A)!0.30!(\B)!0.30!90:(\B)$) {};
    \node[vertex] (b\A\B) at ($(\A)!0.70!(\B)!0.75!90:(\B)$) {};

    \draw[edel] (\A)--(a\A\B);
    \draw[edge] (a\A\B)--(b\A\B);
    \draw[edel] (b\A\B)--(\B);

}


\node[label=right:$u_1$] at (u1) {};
\node[label=right:$u_2$] at (u2) {};
\node[label=left:$u_4$] at (u3) {};
\node[label=left:$u_3$] at (u4) {};
\node[label=above:$x$] at (au2u3) {};
\node[label=right:$y$] at (fu3u4) {};
\node[label=left:$z$] at (p4) {};

\end{scope}

\end{tikzpicture}

\caption{The Crab graph}
\label{crab}

\end{figure}


\end{ex}

\begin{lem} \label{good_autos}
Let $G=G(K,\Gamma)$ be a good graph. Then every element of $\Gamma \leq \mathrm{Aut}(K)$ can be extended to an automorphism of $G$.
\end{lem}

\begin{proof}
Let $\gamma \in \Gamma$ and $e$ be an edges of $K$. The construction of $G$ provides a one-to-one correspondence between the collection of A-sequences on $e$ and the collection of A-sequences on $\gamma(e)$. By the same token, for a non-edge $f$ there exists a one-to-one correspondence between the collection of N-sequences on $f$ and the collection of N-sequences on $\gamma(f)$. Also for any vertex $x$ of $K$, there is a one-to-one correspondence between the collection of belts on $x$ and the collection of belts on $\gamma(x)$.

Let $x$ and $a$ be two vertices of $K$, $\gamma(x)=x'$ and $\gamma(a)=a'$. To extend $\gamma$, if $x$ and $a$ are adjacent, we send an A-sequence on $xa$ to its mate on $x'a'$. Similarly we send N-sequences and belts to their mates. We name this function $\varphi$. If $u$ and $v$ are almost false twins, then $\varphi(u)$ and $\varphi(v)$ are almost false twins, too.

Now we show that $\varphi$ is an automorphism of $G$. Consider an A-sequence $\mathcal{A}=(u_1,v_1,\ldots,u_r,v_r)$ with its image $\mathcal{A}'=(u_1',v_1',\ldots,u_r',v_r')$. Let $p$ be a vertex of $G$ such that $u_ip$ is an edge. We show that $\varphi(u_i) \varphi(p)$ is an edge of $G$.

First suppose that $p \in A$. Then for some $j \geq i$, $p=v_j$. Therefore, the vertex $\varphi(p)=\varphi(v_j)=v_j'$ is adjacent to $\varphi(u_i)=u_i'$.

Second suppose that $p \in K$. Since $u_i$ and $u_1$ are almost false twins, $u_1p$ is an edge. Since $\gamma$ is an automorphism of $K$, $\varphi(u_1) \varphi(p)=\gamma(u_1) \gamma(p)$ is an edge, too. Now $\varphi(u_1)=u_1'$ and $\varphi(u_i)=u_i'$ are almost false twins, therefore $\varphi(u_i) \varphi(p)$ is an edge.

Finally suppose that $p$ is a vertex in some Bi-sequence or belt other than $\mathcal{A}$. The graph $G$ is a good graph, thus $p$ is almost false twin with some vertex $a$ of $K$. Since $u_i$ is almost false twin with $u_1$, $u_1p$ is an edge; and since $p$ is almost false twin with $a$, $u_1a$ is an edge. But both $u_1$ and $a$ are in $K$, hence $\varphi(u_1) \varphi(a)=\gamma(u_1)\gamma(a)$ is an edge. Now because $\varphi(u_1)$ and $\varphi(u_i)$ are almost false twins, $\varphi(u_i) \varphi(a)$ is an edge. Also $\varphi(a)$ and $\varphi(p)$ are almost false twins, so $\varphi(u_i) \varphi(p)$ is an edge.

Same arguments work for N-sequences and belts, mutatis mutandis. Now we have the extension $\varphi \in \mathrm{Aut}(G)$ of $\gamma \in \Gamma$.
\end{proof}

\begin{lem} \label{good_pd}
A good graph is point determining.
\end{lem}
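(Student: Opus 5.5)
We must show that a good graph $G=G(K,\Gamma)$ has no false twins. The plan is to take two distinct vertices $p,q$ of $G$ and produce a vertex adjacent to exactly one of them. We do this by reducing each vertex to its associated nucleus vertex of $K$, as in Remark~\ref{other_adj}. For a vertex $u$ write $u_0$ for $u$ itself if $u\in K$, and otherwise for the endpoint of its Bi-sequence or belt with which $u$ is almost false twin. Outside their own triple component, $u$ and $u_0$ have the same neighbours. The cases split according to whether $p_0\neq q_0$ or $p_0=q_0$.

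\emph{Case $p_0\neq q_0$.} Since $K$ is point determining, there is $w\in K$ adjacent to exactly one of $p_0,q_0$. If $w$ lies in neither the component of $p$ nor the component of $q$, then $w$ separates $p$ and $q$ directly, by the adjacency rule of the construction. The trouble is when $w$ is an endpoint of the Bi-sequence containing $p$ (or $q$). In that case we replace $w$ by a vertex almost false twin to $w$ lying in some other component, or we use a different witness. The cleanest choice is to separate $p$ and $q$ by a vertex inside one of their own components.

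Concretely, if $p$ is a non-nucleus vertex in a Bi-sequence $(u_1,v_1,\dots,u_r,v_r)$, then $p$ is the middle vertex of a triple $(p,\{s,t\})$ of that component. So $s$ and $t$ differ only at $p$, and exactly one of $s,t$ is adjacent to $p$. If neither $s$ nor $t$ is $q$, we would like one of them to separate $p$ from $q$. Their adjacency to $q$ is the same, because $s$ and $t$ differ only at $p$. Their adjacency to $p$ differs. Hence one of $s,t$ is adjacent to exactly one of $p,q$. This argument works whenever $p$ is non-nucleus and $q\notin\{s,t\}$, and it needs no reference to $K$. Every non-nucleus vertex of a sequence or belt is the middle vertex of such a triple by the definitions of A-, N-sequences and belts, including belts, where the middle vertices are all vertices other than $u_1$.

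\emph{Reduction to the remaining cases.} The remaining cases are the following.
\begin{enumerate}
\item Both $p$ and $q$ lie in $K$. Then $K$ point determining gives $w\in K$ adjacent to exactly one of them, and adjacency between vertices of $K$ is unchanged in $G$.
\item Some non-nucleus $p$ has $q\in\{s,t\}$, and the same holds with the roles of $p$ and $q$ swapped. Here $p$ and $q$ are consecutive in the sequence of a component. Say $q=s$. If the triple $(p,\{q,t\})$ makes $q$ adjacent to $p$, then $p$ and $q$ are adjacent, so they cannot be false twins (false twins are non-adjacent). If $q$ and $p$ are non-adjacent, then $t$ is adjacent to $p$ but not to $q$, so $t$ separates them.
\end{enumerate}
The second item disposes of the obstruction, so in fact the argument of the previous paragraph covers all pairs with at least one non-nucleus vertex.

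The main obstacle is checking that the adjacencies defined by Remark~\ref{other_adj} really make every prescribed triple of a Bi-sequence or belt a triple in all of $G$. That is, we must check that $s$ and $t$ have identical neighbourhoods outside $p$, including towards vertices of other components and towards the other endpoint. This holds because $s$ and $t$ are either almost false twins with the same nucleus vertex, or one of them is that nucleus vertex. In the latter case we must compare the internal adjacencies of the sequence, namely the patterns $u_iv_j$ with $j\ge i$ or $j<i$, near the endpoints. I would verify this from the explicit edge rule of each type.
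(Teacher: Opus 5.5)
Your proof is correct, but it follows a different route from the paper's.

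The paper first uses Lemma~\ref{good_autos}, and hence the transitivity of $\Gamma$, to give all vertices of $K$ a common degree $k$. It then separates $K$ from the remaining vertices by degree: below $k$ in A-sequences, above $k$ in N-sequences and belts. Two vertices outside $K$ are handled by passing to their almost-false-twin nucleus vertices, with a sequence-distance argument when these coincide.

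You replace all of this with one local principle: the middle vertex $p$ of a triple $(p,\{s,t\})$ of $G$ has no false twin. For any $q\neq p$, either one of $s,t$ lies in $N(p)\triangle N(q)$, or $q\in\{s,t\}$. In the second case either $p\sim q$, or the other vertex of $\{s,t\}$ separates them; here you should say explicitly that $s\not\sim t$, since they differ only at $p$. Together with $G[K]=K$ this settles every pair, so your opening split by $p_0\neq q_0$ versus $p_0=q_0$ can be dropped.

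Your argument never uses $\Gamma$, so it proves more. Attaching Bi-sequences and belts to any point determining $K$ by the construction's adjacency rule already gives a point determining graph. The paper's route, in exchange, produces the degree separation it reuses in Lemma~\ref{G_circ_is_K}.

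The verification you deferred, that the prescribed triples are genuine triples of $G$, is routine. Its only delicate case is a triple $(p,\{x,s\})$ whose outer vertex $x\in K$ is an endpoint, tested against a vertex $w$ of another attached component that also has $x$ as an endpoint. There $x\sim w$ is governed by that component's internal rule, not by $w_0$. The edge rules still give $x\sim w$ if and only if $x\sim w_0$:
\begin{itemize}
\item $u_1$ is adjacent to every $v_j$ of an A-sequence, and $u_1v_r\in E(K)$;
\item $u_1$ is adjacent to no $v_j$ of an N-sequence, and $u_1v_r\notin E(K)$;
\item $u_1$ is adjacent to no other vertex of a belt;
\item $u_1$ is never adjacent to vertices on its own side of the bipartition, whose $w_0$ is $u_1$ itself.
\end{itemize}
The same holds for $v_r$ by symmetry. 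The paper relies on this fact without comment, both for its separating vertex $d\in K$ (which may be an endpoint of the component of $z$ or $c$) and in Lemma~\ref{G_circ_is_K}.
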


\begin{proof}
Let $G=G(K,\Gamma)$ be a good graph. We know that $K$ is point determining, so two vertices of $K$ have different neighborhoods in $K$ and therefore cannot be false twins in $G$. We continue to prove that every other pair of vertices in $G$ are not false twins.

By Lemma~\ref{good_autos}, for each two vertices $a,x \in K$, we have an automorphism $\psi \in \mathrm{Aut}(G)$ that takes $x$ to $a$, thus $\deg_G(x)=\deg_G(a)$. Let $k$ be the degree of any vertex of $K$ in $G$. By the same argument as Observation~\ref{deg_seq}, any vertex off $K$ has degree not equal to $k$ in $G$. Middle vertices of A-sequences have degree smaller than $k$ and middle vertices of N-sequences and belts have degree greater than $k$. So, any vertex of $K$ and any vertex out of $K$ cannot be false twins.

Now consider two vertices $z$ and $c$ out of $K$. No two vertices of a Bi-sequence or a belt are false twins, therefore suppose $z$ and $c$ are in different Bi-sequences or belts. Then $z$ is almost false twin to some vertex $x$ of $K$ and $c$ is almost false twin to some vertex $a$ of $K$. If $a \neq x$, then $a$ and $x$ are not false twins in $K$, and thus there is some vertex $d$ of $K$ that is adjacent to exactly one of $x$ and $a$. It follows that $d$ is a difference in the neighborhoods of $z$ and $c$. Thus $z$ and $c$ are not false twins.

If $x=a$, then, in order to be false twins, $z$ and $c$ have the same degrees and therefore the same sequence-distance from $x$. Hence $z$ is the middle vertex of $C_1$ and $c$ is the middle vertex of $C_2$, where $C_1$ and $C_2$ are different Bi-sequences or belts. There exists a middle vertex $y$ in $C_1$ which is a difference between the neighborhoods of $x$ and $z$. But $y$ is not in $C_2$ and therefore it is not a difference between the neighborhoods of $a=x$ and $c$. Hence, $y$ is a difference between the neighborhoods of $z$ and $c$. It follows that, in any case, $z$ and $c$ are not false twins. The proof is now complete.
\end{proof}

\begin{lem} \label{G_circ_is_K}
For a good graph $G=G(K,\Gamma)$, $G^\circ=K$.
\end{lem}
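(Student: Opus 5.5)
We prove the two inclusions $G^\circ\subseteq K$ and $K\subseteq G^\circ$ separately. By Lemma~\ref{good_pd}, $G$ is point determining, so for any vertex $v$, $v\in G^\circ$ exactly when $G-v$ has no pair of false twins. Equivalently, $v\notin G^\circ$ exactly when $v$ is the middle vertex of some triple of $G$.

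\emph{Non-nucleus vertices off $K$.} Every vertex $v\notin K$ lies in the middle of some Bi-sequence or belt that was put on $K$. In that Bi-sequence or belt, $v$ has two neighbours in the sequence order, say $x$ and $z$, which form a triple $(v,\{x,z\})$ inside the component. We must check that this is still a triple in the whole graph $G$, i.e.\ that $x$ and $z$ have the same neighbours outside the component. This holds because $x$ and $z$ are almost false twins (or one of them is the endpoint that the other is almost false twin with), and by the adjacency rule of the construction they agree on all vertices outside their own Bi-sequence or belt. Hence $x,z$ are false twins in $G-v$, so $v\notin G^\circ$.

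\emph{Vertices of $K$ are nucleus.} Let $x\in K$ and suppose, for contradiction, that $(x,\{p,q\})$ is a triple of $G$, i.e.\ $N(p)\triangle N(q)=\{x\}$. Then $\deg p$ and $\deg q$ differ by exactly one. We split into cases.

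\begin{enumerate}
\item \textbf{Both $p,q\in K$.} Then $p,q$ have the same neighbourhood in $K$ except at $x$. Let $k$ be the common degree of the vertices of $K$ in $G$ (Lemma~\ref{good_autos} gives transitivity on $K$). Since $\deg_G(p)=\deg_G(q)=k$, this contradicts $|N(p)\triangle N(q)|=1$, which forces the degrees to differ by one.
\item \textbf{Exactly one of $p,q$ lies in $K$.} The vertex outside $K$ has degree $k\pm j$, where $j\ge1$ is its position in its sequence, as in Observation~\ref{deg_seq}. So $j=1$, meaning it is an immediate successor of some endpoint. Its neighbourhood differs from that of its almost-false-twin endpoint at a middle vertex of its own component. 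For $N(p)\triangle N(q)$ to be only $\{x\}$, that middle vertex would have to be $x$, which is impossible since $x\in K$. I will check this difference carefully, including the case where the component is itself on $x$.
\item \textbf{Neither $p$ nor $q$ lies in $K$.} Each is almost false twin with a vertex of $K$, say $p_0$ and $q_0$.
 \begin{itemize}
 \item If $p_0\neq q_0$, point determinacy of $K$ gives a vertex $d\in K$ separating $p_0$ and $q_0$. We need a second difference besides $x$. I would pick the separating vertex so that it avoids $x$, or else use the internal middle vertices as in the proof of Lemma~\ref{good_pd}.
 \item If $p_0=q_0$, the internal middle vertex argument from the proof of Lemma~\ref{good_pd} produces a difference lying outside $K$, so it cannot equal $x$.
 \end{itemize}
\end{enumerate}

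The main obstacle is the subcase $p_0\neq q_0$ in the last case, where $p_0,q_0$ might differ in $K$ only at $x$. Then $(x,\{p_0,q_0\})$ would be a triple of $K$, and this is ruled out because $K$ is regular and point determining: the corollary after Theorem~\ref{regular_gnot} gives $K^\circ=K$, i.e.\ $\deg_K(p_0)=\deg_K(q_0)$ excludes a single difference. The same regularity argument handles the residual situation in the case where exactly one of $p,q$ lies in $K$. Combining both inclusions gives $G^\circ=K$.
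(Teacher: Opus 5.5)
Your argument is correct in substance, but it takes a different route from the paper.

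\textbf{How the routes differ.} The paper proves $K\subseteq G^\circ$ structurally:
\begin{enumerate}
\item By transitivity of $\Gamma$, some Bi-sequence or belt $C$ sits on $x$.
\item Using the Feder--Hell classification of triple components and Lemma~\ref{no_c_intersection}, it argues that a triple with middle vertex $x$ must continue $C$ past $x$ as $(x,\{y,a\})$, with $y$ the successor of $x$ in $C$ and $a\in K$.
\item It then computes $\deg_G(y)$ two ways: $\deg_G(y)<k$ from the A-sequence, versus $\deg_G(y)=\deg_G(a)+1=k+1$ from the triple, and symmetrically for N-sequences.
\end{enumerate}
The case $G=K$ is handled separately. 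You instead do a direct case analysis on where $p,q$ lie, using only the construction's adjacency rule, the degree pattern, and the regularity of $K$. This avoids the triple-component classification. It also avoids identifying the triple components of $G$ with the pieces of the construction, which the paper justifies only loosely (``a belt cannot be extended'', ``$a$ is another vertex of $K$ because of the possible shapes''). And it covers $G=K$ with no separate case.

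\textbf{An unstated fact you should prove.} You rely on this without stating it: for every vertex $v$ with representative $v_0\in K$ (take $v_0=v$ if $v\in K$),
$N_G(v)\cap V(K)=N_G(v_0)\cap V(K)$.
For $K$-vertices outside $v$'s own piece this is the adjacency rule. For the other endpoint of $v$'s own Bi-sequence you must check the internal pattern:
\begin{enumerate}
\item in an A-sequence, $u_i\sim v_r$ just as $u_1$ is;
\item in an N-sequence, neither $u_i$ nor $u_1$ is adjacent to $v_r$;
\item in a belt, no vertex is adjacent to $u_1$.
\end{enumerate}
Without this fact, the step ``a vertex $d\in K$ separating $p_0,q_0$ also separates $p,q$'' is unjustified when $d$ is an endpoint of the piece containing $p$ or $q$. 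Likewise, the middle-vertex argument of Lemma~\ref{good_pd} that you invoke for $p_0=q_0$ only covers $p,q$ lying in different pieces.

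\textbf{A simplification.} Once you have the fact, your three cases collapse to one line:
$\{x\}=(N(p)\triangle N(q))\cap V(K)=N_K(p_0)\triangle N_K(q_0)$.
The right-hand side is empty if $p_0=q_0$, and contradicts regularity of $K$ if $p_0\neq q_0$. So the degree bookkeeping from Observation~\ref{deg_seq} is not needed for this inclusion. (Incidentally, the degree-$k\pm1$ vertices are $u_2$ and $v_{r-1}$, the next vertices in the endpoints' own parts, not the immediate successors of the endpoints.)

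\textbf{Citation.} Do not cite the corollary after Theorem~\ref{regular_gnot}: it is stated for obs* graphs, which is what is being established here. Use the direct observation you already make: $K$ is vertex-transitive, hence regular, and $|A\triangle B|=1$ forces $\bigl||A|-|B|\bigr|=1$.
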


\begin{proof}
If $G=K$, which means no Bi-sequences or belts are put on $K$, then $G^\circ=K^\circ=K$, since $K$ is vertex-transitive and point determining.

So, suppose there are some Bi-sequences or belts that are put on $K$. We show that $G^\circ \subseteq K$ and $K \subseteq G^\circ$. Any vertex $u$ not in $K$, is in some triple $T_u=(u,\{v,w\})$ and hence is not a nucleus vertex of $G$. Consequently, $G^\circ \subseteq K$.

Suppose $x$ is a vertex of $K$. Since $\Gamma$ is transitive, and at least we have one Bi-sequence or belt that is put on $K$, one of its mates, say $C$, is on $x$. If $x$ is not a nucleus vertex of $G$, then it is involved in some triple $T_x=(x,\{a,b\})$.

Note that $C$ cannot be a belt, because a belt cannot be extended to form a new triple component. So, $C$ is a Bi-sequence of order at least $4$. Thus $x$ is at least the forth vertex in some Bi-sequence. If a non-nucleus vertex is in two triple components, they must be small sequences by Lemma~\ref{no_c_intersection}. Therefore $x$ must be the second vertex in its corresponding sequence, which is a contradiction. Hence, if $y$ is the successor of $x$ in $C$, then $b=y$ and $T_x=(x,\{y,a\})$. Moreover, $a$ is another vertex of $K$, because of the possible shapes of triple components. By Lemma~\ref{good_autos} there is an automorphism of $G$ that takes $x$ to $a$. Therefore $\deg_G(x)=\deg_G(a)$.

If $C$ is an A-sequence, $y$ is adjacent to $x$ and $\deg_G(y)<\deg_G(x)$, by an argument similar to Observation~\ref{deg_seq}. Furthermore we have $N_G(a)=N_G(y)-\{x\}$, so $\deg_G(y)=\deg_G(a)+1$, which is a contradiction. If $C$ is an N-sequence, $y$ is non-adjacent to $x$ and $\deg_G(y)>\deg_G(x)$. Furthermore we have $N_G(y)=N_G(a)-\{x\}$, so $\deg_G(y)=\deg_G(a)-1$, which is a contradiction. Therefore $x$ is a nucleus vertex and $G^\circ=K$.
\end{proof}

Now all ingredients are ready to prove the main theorem of this paper.

\begin{thm} \label{char_obst}
A graph $G$ is an obs* graph if and only if it is a good graph $G=G(K,\Gamma)$ with $K=G[G^\circ]$ and $\Gamma=\{\varphi\big|_{K} : \; \varphi \in \mathrm{Aut}(G)\}$.
\end{thm}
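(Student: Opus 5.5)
We prove the two directions separately.

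\emph{Good graphs are obs* graphs.} Let $G=G(K,\Gamma)$ be a good graph.
By Lemma~\ref{good_pd}, $G$ is point determining, and by Lemma~\ref{G_circ_is_K}, $G^\circ=K$.
Fix $x\in K$ and set $H=G-x$. By Theorem~\ref{obs_is_realization} it suffices to show that $G-a\cong H$ for every $a\in G^\circ=K$.
Since $\Gamma$ is transitive on $K$, there is $\gamma\in\Gamma$ with $\gamma(x)=a$. By Lemma~\ref{good_autos}, $\gamma$ extends to some $\varphi\in\mathrm{Aut}(G)$, and $\varphi$ restricts to an isomorphism $G-x\to G-a$.
Hence $G$ is a realization of $H$, so $G\in\mathrm{obs}^*(H)$.

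\emph{Obs* graphs are good graphs.} Let $G$ be an obs* graph, put $K=G[G^\circ]$, and let $\Gamma=\{\varphi|_K:\varphi\in\mathrm{Aut}(G)\}$.
\begin{enumerate}
\item Every automorphism of $G$ preserves $G^\circ$, so $\Gamma$ is well defined and is a subgroup of $\mathrm{Aut}(K)$. By Corollary~\ref{aut_G} it is transitive.
\item By Theorems~\ref{K_pd} and~\ref{the_big_thm}, $K$ is point determining and vertex-transitive.
\item By Lemma~\ref{no_odd}, Theorem~\ref{triple_comps} and Corollary~\ref{no_intersection_in_obs}, every vertex outside $K$ lies in exactly one triple component, which is an A-sequence, an N-sequence or a belt. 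These components meet $K$ only at their endpoints, which are nucleus vertices.
\item An A-sequence joins two adjacent nucleus vertices and an N-sequence joins two non-adjacent ones: the endpoints $u_1,v_r$ are adjacent exactly when $r\ge 1$ in the A-case. A belt sits on a single nucleus vertex.
\item By Remark~\ref{other_adj}, all remaining adjacencies of $G$ are determined by the almost-false-twin rule, which is exactly the rule used in the construction of a good graph.
\end{enumerate}

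It remains to show that the attached components respect the $\Gamma$-orbits. That is, for each $\gamma=\varphi|_K\in\Gamma$ and each edge $e$ of $K$, the multiset of orders of A-sequences on $e$ must equal that on $\gamma(e)$, and likewise for N-sequences on non-edges and belts on vertices.

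For this we use that $\varphi$ maps triples to triples, since it is an automorphism. Hence $\varphi$ maps triple components to triple components of the same type and order, and it maps endpoints to endpoints because it preserves nucleus vertices.
So $\varphi$ sends each A-sequence on $e=uv$ bijectively to an A-sequence on $\varphi(u)\varphi(v)=\gamma(e)$. It also sends N-sequences on non-edges to N-sequences on their images, and belts on $x$ to belts on $\gamma(x)$.
Thus $\varphi$ gives a bijection between the components attached at $e$ (or $x$) and those attached at its $\gamma$-image. This is exactly the mate structure required in the construction, so $G=G(K,\Gamma)$.

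The step needing the most care is the claim that the triple components of $G$ are exactly the attached Bi-sequences and belts of the construction. In particular, a Bi-sequence placed on $K$ must not merge with anything into a larger component, and the orbit bookkeeping must be applied to whole components rather than to pieces. This is handled by Corollary~\ref{no_intersection_in_obs} and by the fact that endpoints are nucleus vertices.
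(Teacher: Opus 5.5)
Your proposal is correct and follows essentially the same route as the paper. The forward direction uses Lemmas~\ref{good_pd}, \ref{G_circ_is_K}, \ref{good_autos} and Theorem~\ref{obs_is_realization}; the converse uses Theorems~\ref{K_pd} and \ref{the_big_thm}, Corollary~\ref{aut_G}, and the fact that each automorphism of $G$ carries the Bi-sequences and belts on an edge, non-edge or vertex of $K$ to those on its $\Gamma$-image, with you spelling out slightly more explicitly than the paper why automorphisms preserve $G^\circ$, triple components, their types and their endpoints.
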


\begin{proof}
First suppose that $G=G(K,\Gamma)$ is a good graph. By Lemmas~\ref{good_pd} and~\ref{G_circ_is_K}, $G$ is point determining and $G^\circ=K$. For every two nucleus vertices $x,a \in K$ we have an automorphism $\gamma$ of $K$ that sends $x$ to $a$. But by Lemma~\ref{good_autos}, an element $\gamma \in \Gamma$ can be extended to an automorphism $\psi$ of $G$. Therefore we have an isomorphism $\psi':G-x \to G-a$. Thus $G$ is a realization of the graph $H=G-x$ and is an obs* graph, by Theorem~\ref{obs_is_realization}.

For the converse, let $G$ be an obs* graph and $K=G[G^\circ]$. By Theorems~\ref{K_pd} and~\ref{the_big_thm}, $K$ is a point determining vertex-transitive graph. Let $\Gamma=\{\varphi\big|_{K} : \; \varphi \in \mathrm{Aut}(G)\} \leq \mathrm{Aut}(K)$. The subgroup $\Gamma$ is transitive, by Corollary~\ref{aut_G}.

Suppose $\mathcal{O}$ is an edge orbit of $K$ with respect to $\Gamma$ and let $e_1,e_2 \in \mathcal{O}$. Then there exists $\varphi \in \mathrm{Aut}(G)$ such that $\varphi\big|_{K} \in \Gamma$ maps $e_1$ to $e_2$. Consequently, $\varphi$ takes any A-sequence on $e_1$ to an A-sequence on $e_2$. Thus the collection of A-sequences on $e_1$ and the collection of A-sequences on $e_2$ are the same (up to the isomorphism $\varphi$). Note that if $e_1 \neq e_2$, then the collection of A-sequences on $e_1$ and the collection of A-sequences on $e_2$ do not intersect, though whenever $e_1$ and $e_2$ have a vertex $x$ in common, every A-sequence on $e_1$ intersects every A-sequence on $e_2$ in the vertex $x$. We have a similar argument for elements of non-edge orbits and N-sequences.
For each two nucleus vertices $x$ and $a$ there exists $\varphi\big|_{K} \in H$ that takes $x$ to $a$. The automorphism $\varphi \in \mathrm{Aut}(G)$ takes the belts on $x$ to the belts on $a$. So, the collection of belts on $x$ and the collection of belts on $a$ are the same (up to the isomorphism $\varphi$). Therefore $G=G(K,\Gamma)$ is a good graph.

\end{proof}

\begin{ex}
This is a continuation of Example~\ref{patric_orbits} to construct a good graph $G(K,\Gamma)$, the Patrick Star graph (Patrick Star is the starfish character from SpongeBob SquarePants), with $K=C_6$ and the transitive subgroup $\Gamma=\{e,a^2,a^4,b,a^2b,a^4b\}$ of $\mathrm{Aut}(K)$, see Figure~\ref{patrick}. We put two A-sequences of order $4$ on each edge of $\mathcal{O}_1$. We put an N-sequence of order $6$ on each non-edge of $\mathcal{O}'_1$, and nothing on other edge and non-edge orbits. We also put a small belt on each vertex of $K$. By Theorem~\ref{char_obst}, this will give us an obs* graph of order $42$. The method that we use to draw this graph is as in Example~\ref{crab_example} to avoid cluttering the figure.


\begin{figure}[h]

\centering

\begin{tikzpicture}[
    x=2.5cm, y=2.5cm, 
    vertex/.style={draw, circle, fill=white, inner sep=0pt, minimum size=6pt, thick},
    edge/.style={draw, thick}
]

    \foreach \i in {1,...,6} {
        \node[vertex, fill=black] (u\i) at ({120 - 60*(\i-1)}:1.5) {};
    }

\foreach \i [evaluate=\i as \nexti using {int(mod(\i,6)+1)},
             evaluate=\i as \previ using {int(mod(\i+4,6)+1)}] in {1,...,6} {

    \coordinate (dir\i) at ($ (u\previ) + (u\nexti) - (u\i) $);

    \coordinate (mid\i) at ($(u\i)!-0.4!(dir\i)$);

    \node[vertex] (p\i) at ($(mid\i)!0.3!90:(u\i)$) {};
    \node[vertex] (q\i) at ($(mid\i)!0.3!-90:(u\i)$) {};

    \draw[edel] (u\i)--(p\i);
    \draw[edge] (p\i)--(q\i);
    \draw[edel] (q\i)--(u\i);

}

    \node[label={below:$u_1$}]       at (u1) {};
    \node[label={below right:$u_2$}] at (u2) {};
    \node[label={above :$u_3$}] at (u3) {};
    \node[label={above left:$u_4$}]       at (u4) {};
    \node[label={above:$u_5$}]  at (u5) {};
    \node[label={above right:$u_6$}]  at (u6) {};

    \draw[edge] (u1)--(u2)--(u3)--(u4)--(u5)--(u6)--(u1);

\node[vertex] (x14a) at ($(u1)!1/5!(u4)$) {};
\node[vertex] (x14b) at ($(u1)!2/5!(u4)$) {};
\node[vertex] (x14c) at ($(u1)!3/5!(u4)$) {};
\node[vertex] (x14d) at ($(u1)!4/5!(u4)$) {};

\draw[edel] (u1)--(x14a);
\draw[edge] (x14a)--(x14b);
\draw[edel] (x14b)--(x14c);
\draw[edge] (x14c)--(x14d);
\draw[edel] (x14d)--(u4);

\node[vertex] (x25a) at ($(u2)!1/5!(u5)$) {};
\node[vertex] (x25b) at ($(u2)!2/5!(u5)$) {};
\node[vertex] (x25c) at ($(u2)!3/5!(u5)$) {};
\node[vertex] (x25d) at ($(u2)!4/5!(u5)$) {};

\draw[edel] (u2)--(x25a);
\draw[edge] (x25a)--(x25b);
\draw[edel] (x25b)--(x25c);
\draw[edge] (x25c)--(x25d);
\draw[edel] (x25d)--(u5);

\node[vertex] (x36a) at ($(u3)!1/5!(u6)$) {};
\node[vertex] (x36b) at ($(u3)!2/5!(u6)$) {};
\node[vertex] (x36c) at ($(u3)!3/5!(u6)$) {};
\node[vertex] (x36d) at ($(u3)!4/5!(u6)$) {};

\draw[edel] (u3)--(x36a);
\draw[edge] (x36a)--(x36b);
\draw[edel] (x36b)--(x36c);
\draw[edge] (x36c)--(x36d);
\draw[edel] (x36d)--(u6);

    \foreach \i [evaluate=\i as \nexti using {int(mod(\i,6)+1)}] in {1,3,5} {
        
        \node[vertex] (a\i) at ($(u\i)!0.3!(u\nexti)!0.5!90:(u\nexti)$) {};
        \node[vertex] (b\i) at ($(u\i)!0.7!(u\nexti)!1.2!90:(u\nexti)$) {};
        \draw[edge] (u\i)--(a\i);
        \draw[edel] (a\i)--(b\i);
        \draw[edge] (b\i)--(u\nexti);

        \node[vertex] (c\i) at ($(u\i)!0.35!(u\nexti)!0.27!90:(u\nexti)$) {};
        \node[vertex] (d\i) at ($(u\i)!0.65!(u\nexti)!0.5!90:(u\nexti)$) {};
        \draw[edge] (u\i)--(c\i);
        \draw[edel] (c\i)--(d\i);
        \draw[edge] (d\i)--(u\nexti);

    }

\end{tikzpicture}

\caption{The Patrick Star graph}
\label{patrick}

\end{figure}

\end{ex}


\bibliographystyle{plain}
\bibliography{obs_bib}


\end{document}